\theoremstyle{plain}
\newtheorem{theorem}{Theorem}
\newtheorem{lemma}[theorem]{Lemma}
\newtheorem{corollary}[theorem]{Corollary}
\newtheorem{prop}[theorem]{Proposition}
\theoremstyle{remark}
\newtheorem{remark}[theorem]{\bf Remark}
\newtheorem{conjecture}[theorem]{\bf Conjecture}
\newtheorem*{prob*}{\bf Problem}
\newtheorem*{quest*}{\bf Question}
\renewcommand{\Im}{\operatorname{Im}}
\renewcommand{\Re}{\operatorname{Re}}
\newcommand{\Z}{\mathbb{Z}}
\patchcmd{\section}{\scshape}{\bfseries}{}{}
\renewcommand{\@secnumfont}{\bfseries}
\numberwithin{theorem}{section}
\numberwithin{equation}{section}
\newcommand{\pMatrix}[4]{\left(\begin{matrix}#1 & #2 \\ #3 & #4\end{matrix}\right)}
\renewcommand{\pmatrix}[4]{\left(\begin{smallmatrix}#1 & #2 \\ #3 & #4\end{smallmatrix}\right)}
\newcommand{\ppmod}[1]{\hspace{-0.15cm}\pmod{#1}}
\begin{document}
\author{Chantal David, Alexander Dunn, Alia Hamieh and Hua Lin}

\address{Chantal David: Department of Mathematics and Statistics, 
Concordia University, 1455 de Maisonneuve West, Montr\'{e}al, Qu\'{e}bec, Canada H3G 1M8}
\email{chantal.david@concordia.ca}

\address{Alexander Dunn: School of Mathematics, Georgia Institute of Technology, Atlanta, USA}
\email{ajd3303@gmail.com}

\address{Alia Hamieh: Department of Mathematics and Statistics, University of Northern British Columbia, Prince George,
BC V2N4Z9, Canada}
\email{alia.hamieh@unbc.ca}

\address{Hua Lin: Department of Mathematics, Northwestern University, 
2033 Sheridan Rd, Evanston, IL, 60208}
\email{hua.lin@northwestern.edu}

\subjclass[2020]{11F27, 11F30, 11L05, 11L20, 11N36}
\keywords{Quartic Gauss sums, quadratic large sieve, primes, automorphic forms.}

\title{Quartic Gauss sums over primes and metaplectic theta functions}
\maketitle
\begin{abstract}
We improve 1987 estimates of Patterson for sums of quartic Gauss sums over primes. 
Our Type-I and Type-II estimates feature new ideas, including use of the quadratic large sieve 
over $\mathbb{Q}(i)$, and Suzuki's evaluation of the Fourier-Whittaker coefficients of
quartic theta functions at squares. We also conjecture asymptotics for certain 
moments of quartic Gauss sums over primes.
\end{abstract}

\tableofcontents
\section{Introduction}

\subsection{Background and statement of results}
Gauss sums are fundamental objects in number theory.
Gauss \cite{Gauss} famously gave a closed form formula for the normalised quadratic Gauss sum over $\mathbb{Q}$
with prime modulus. Gauss' evaluation
depended only on the congruence class of the prime modulo $4$.

Gauss sums attached to characters of a fixed order greater than or equal to $3$ exhibit a very different behavior.
Kummer in 1846 \cite[Paper 16,17]{Kummer} studied the distribution of normalised cubic Gauss sums 
(over the Eisenstein quadratic field $\mathbb{Q}(\zeta_3)$) for small prime moduli, and predicted that the points landed
a ratio of $1:2:3$ when plotted on appropriate thirds of the unit circle.
In a breakthrough 1979 paper, Heath-Brown and Patterson \cite{HBP} 
disproved Kummer's conjecture and showed that the normalised cubic 
Gauss sums with prime moduli are equidistributed on the unit circle.
Patterson \cite{Pat3} generalised 
his equidistribution result with Heath-Brown to $n$th order Gauss sums to prime moduli 
(working over number fields $K$ with $K \supseteq \mathbb{Q}(\zeta_n)$ and $n \geq 3$ fixed).

Patterson \cite{Pat4}
(with refinements by Heath-Brown and Patterson \cite{HBP}) 
made a refined conjecture that concerned the asymptotic behavior for  
moments of normalised cubic Gauss sums. The conjectured main term in the first moment
explained the small number bias witnessed by Kummer in 1846.
Patterson's conjecture was established under the Generalised Riemann 
Hypothesis by the second author and Radziwi\l\l \cite{DR} in 2021, improving 
on an unconditional upper bound of Heath Brown \cite{HB} established in 2000.

The purpose of this paper is to improve the known estimates in the case of quartic Gauss sums over primes.
As explained in \S \ref{overall}, there are some significant differences when compared to the cubic case.

Before stating our result and conjecture, we need some notation. 
Let $e(x):=e^{2 \pi i x}$ for $x \in \mathbb{R}$ and
$\check{e}(z):=e(z+\overline{z})$ for $z \in \mathbb{C}$. 
Let $\mathbb{Q}(i)$ be the Gaussian quadratic field (class number $1$)
with ring of integers $\mathbb{Z}[i]$. Let $\lambda:=1+i$ denote the unique ramified prime.
For $c \in \mathbb{Z}[i]$ with $(c,\lambda)=1$, and $\nu \in \lambda^{-2} \mathbb{Z}[i]$,
the normalised quartic Gauss sum over $\mathbb{Z}[i]$
is defined as
\begin{equation*}
\widetilde{g}_4(\nu,c):=\frac{1}{\sqrt{N(c)}} \sum_{d \ppmod{c}} \Big( \frac{d}{c} \Big)_4 \check{e} \Big( \frac{\nu d}{c} \Big),
\end{equation*}
where $\big(\frac{\cdot}{c} \big)_4$ is the quartic symbol defined in \S \ref{quarticsec}.
We write $\widetilde{g}_4(c):=\widetilde{g}_4(1,c)$ for short. Let $\Lambda(c)$ denote the
von Mangoldt function on $\mathbb{Z}[i]$.

\begin{theorem} \label{mainthm} 
Let $R:(0,\infty) \rightarrow \mathbb{C}$ be a smooth function with compact support
in $[1,2]$. Then for any $\ell \in \mathbb{Z}$, $\beta \in \{1,1+\lambda^3 \} \ppmod{4}$, and $\varepsilon>0$, we have
\begin{equation*}
\sum_{\substack{c \in \mathbb{Z}[i] \\ c \equiv \beta \ppmod{4}}} \widetilde{g}_4(c) \Big( \frac{\overline{c}}{|c|} \Big)^{\ell}
\Lambda(c) R \Big( \frac{N(c)}{X} \Big) \ll_{\varepsilon,R} X^{5/6+\varepsilon} + X^{3/4+\varepsilon} |\ell| ^{3/2+\varepsilon},
\end{equation*}
as $X \rightarrow \infty$.
\end{theorem}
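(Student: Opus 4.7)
The plan is to apply a Heath--Brown combinatorial identity to $\Lambda(c)$, reducing the sum in Theorem \ref{mainthm} to $O((\log X)^{A})$ bilinear expressions of the shape
\begin{equation*}
S(M,N) := \sum_{\substack{m,n \in \Z[i] \\ mn \equiv \beta \ppmod{4}}} a_m b_n \,\widetilde{g}_4(mn) \Big(\frac{\overline{mn}}{|mn|}\Big)^{\ell} R\Big(\frac{N(mn)}{X}\Big),
\end{equation*}
with $MN \asymp X$, where $N(mn)$ denotes the Gaussian norm. Type-I sums are those in which one coefficient sequence, say $b_n$, is smooth and supported on a long range; Type-II sums are those in which both $a_m,b_n$ are arbitrary bounded sequences with $m,n$ of comparable size.

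For the Type-I estimate I would fix $m$, apply twisted multiplicativity of $\widetilde{g}_4(mn)$ to separate the variables and extract a quartic residue symbol in $n$, and then identify the remaining smooth sum over $n$ as a Fourier--Whittaker coefficient of the quartic metaplectic theta function $\theta_4$ along the Hecke line of frequency $\ell$. Poisson summation (or the functional equation of the associated Dirichlet series) dualises the $n$-sum; after the unfolding, only contributions from \emph{square} Gaussian moduli survive with their full size, and Suzuki's explicit evaluation of the Fourier--Whittaker coefficients of $\theta_4$ at squares provides the cancellation required to beat the trivial bound. Balancing the direct estimate of length $N$ against the dual sum of length $\asymp X/N$ then yields the $X^{5/6+\varepsilon}$ contribution, with the $|\ell|^{3/2+\varepsilon}$ piece arising from the archimedean conductor of the Hecke twist.

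For the Type-II estimate I would apply Cauchy--Schwarz in the shorter variable and open the square, reducing matters to an off-diagonal sum roughly of the form
\begin{equation*}
\sum_{n_1,n_2} b_{n_1}\overline{b_{n_2}} \sum_{m} \widetilde{g}_4(m n_1)\overline{\widetilde{g}_4(m n_2)} \Big(\frac{\overline{n_1}\,|n_2|}{|n_1|\,\overline{n_2}}\Big)^{\ell} \Phi_X(m,n_1,n_2),
\end{equation*}
where $\Phi_X$ is a smooth cutoff. Using the product formula for $\widetilde{g}_4$ together with quartic reciprocity over $\Z[i]$, the ratio of quartic residue symbols in $m$ collapses to a \emph{quadratic} residue symbol modulo a Gaussian integer built from $n_1,\overline{n_2}$. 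At this stage the quadratic large sieve over $\Q(i)$ applies to the resulting bilinear form in $(n_1,n_2)$ and delivers a power saving uniformly across the Type-II range.

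The principal obstacle is the Type-I step: one must transform a Hecke-twisted sum of quartic Gauss sums into honest Fourier coefficients of $\theta_4$, suppress the contribution of the non-square moduli (for which essentially nothing explicit is known beyond square-sieve type upper bounds), and propagate the dependence on $\ell$ uniformly through the archimedean weight. Suzuki's identity is precisely what makes the square contribution manageable; extending the argument across the two admissible residues $\beta\in\{1,\,1+\lambda^3\}\ppmod{4}$ requires careful bookkeeping of the local data at the ramified prime $\lambda$. Given both bilinear estimates, an optimal choice of the cutoff parameters in the Heath--Brown decomposition delivers the exponent $5/6+\varepsilon$ claimed in Theorem \ref{mainthm}.
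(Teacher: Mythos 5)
Your Type-II plan is essentially the paper's: reduce $\widetilde{g}_4(mn_1)\overline{\widetilde{g}_4(mn_2)}$ to a quadratic residue symbol in $m$ by twisted multiplicativity and quartic reciprocity, then invoke the quadratic large sieve over $\mathbb{Q}(i)$. The paper applies Cauchy--Schwarz so as to feed directly into the dispersion-type inequality of Theorem~\ref{quadsieve} rather than literally opening the square, but the underlying idea is the same, and this is where the $X^{5/6+\varepsilon}$ actually comes from. (The choice of Heath--Brown's identity in place of the paper's Vaughan decomposition is an immaterial difference.)

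The Type-I description, however, has a conceptual misreading and omits the step that makes the theorem go through. After twisted multiplicativity the Type-I sum involves $\widetilde{g}_4(a^2,b)$ with $a$ the \emph{short} sifted variable, so the relevant object is the Dirichlet series $\psi^{(4)}_{\beta}(s,a^2,\ell)$. Its residue at $s=5/4$ is a Fourier--Whittaker coefficient of the quartic theta function evaluated at the square $a^2$, and Suzuki's identity bounds that residue by $N(a)^{-1/4}$; summing $a$ up to the level $u$ gives the $X^{3/4+\varepsilon}$ pole contribution. It is not that ``only square moduli survive'' after dualising the $b$-sum --- the square appears because it is the index $a^2$, forced by multiplicativity, not a surviving modulus. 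Moreover, the contour term on $\Re(s)=1/2+\varepsilon$ cannot be handled pointwise: the convexity bound of Proposition~\ref{levelprop} in the $\alpha$-aspect would only yield $X^{5/6+\varepsilon}(|\ell|+1)^{3/2}$ from Type-I, which is strictly weaker than the theorem. The paper's key additional ingredient is the Voronoi formula of Lemma~\ref{hvoronoi} combined with a \emph{second application of the quadratic large sieve}, this time averaging over the short variable $\alpha$, to prove the second-moment bound of Proposition~\ref{secondmoment}; that sub-convex-on-average saving of $u^{1/2}$ is what brings the Type-I contribution down to $X^{3/4+\varepsilon}(|\ell|+1)^{3/2+\varepsilon}$. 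Your proposal never introduces this average over the short variable, so as written it would not recover the $\ell$-uniformity claimed in Theorem~\ref{mainthm}. Finally, you attribute the $X^{5/6+\varepsilon}$ exponent to the Type-I balance; in fact Type-II is the $X^{5/6}$-bottleneck, while Type-I only contributes $X^{3/4}$.
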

This substantially improves the bound $\ll_{\ell} X^{19/20}$ (for the sum with sharp cutoffs)
from \cite[Main Theorem]{Pat3}.

In analogy with Patterson's conjecture for the $X^{5/6}$-bias for the first moment of cubic Gauss sums \cite{Pat4,HBP}, 
we propose the following $X^{3/4}$-conjecture for the bias for quartic Gauss sums 
over primes.

\begin{conjecture} \label{quarticconj}
For $\beta \in \{1,1+\lambda^3 \} \ppmod{4}$, there exists a constant $b_{\beta} \neq 0$ such that for
any $\varepsilon>0$ and $\ell \in \mathbb{Z}$ we have, 
\begin{equation*}
\sum_{\substack{ c \in \mathbb{Z}[i] \\ N(c) \leq X  \\ c \equiv \beta \ppmod{4}}}
 \widetilde{g}_4(c) \Big( \frac{\overline{c}}{|c|} \Big)^{\ell}
\Lambda(c)=\begin{cases}
b_{\beta} X^{3/4}+O_{\varepsilon}(X^{1/2+\varepsilon}) & \text{if}  \quad \ell=0 \\
O_{\varepsilon,\ell}(X^{1/2+\varepsilon}) & \text{if} \quad \ell \neq 0
\end{cases},
\end{equation*}
as $X \rightarrow \infty$. \end{conjecture}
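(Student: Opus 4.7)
The plan is to mirror the strategy used for Patterson's cubic bias conjecture, established conditionally under GRH by the second author and Radziwi\l\l{} \cite{DR}, and adapt it to the quartic setting. After a standard smooth-to-sharp cutoff reduction, one may work with a smooth weight $R$ as in Theorem \ref{mainthm} and write, via Perron,
\[
S_\ell(X) := \sum_{\substack{c \in \mathbb{Z}[i] \\ c \equiv \beta \ppmod{4}}} \widetilde{g}_4(c) \Big(\frac{\overline{c}}{|c|}\Big)^{\ell} \Lambda(c)\, R\Big( \frac{N(c)}{X}\Big) = -\frac{1}{2\pi i} \int_{(2)} \frac{L'}{L}(s;\ell,\beta)\, \widetilde{R}(s)\, X^s\, ds,
\]
where $L(s;\ell,\beta)$ is the $\lambda$-completed Dirichlet series attached to $\sum_c \widetilde{g}_4(c)(\overline{c}/|c|)^\ell \mathbf{1}_{c \equiv \beta \ppmod{4}} N(c)^{-s}$, and $\widetilde{R}$ is the Mellin transform of $R$.

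The first key step is to establish the analytic continuation and pole structure of $L(s;\ell,\beta)$. This series is, up to local factors at $\lambda$ and at the primes dividing the conductor $4$, the Fourier expansion of the quartic Kubota theta function $\theta_4$ on the metaplectic double cover of $GL_2(\mathbb{Q}(i))$, twisted by the narrow ray class character determined by $\ell$ and $\beta$. Since $\theta_4$ arises as a residue of the quartic metaplectic Eisenstein series, one expects a simple pole of $L(s;\ell,\beta)$ at $s = 3/4$ precisely when $\ell = 0$ (the twist by $(\overline{c}/|c|)^\ell$ should kill the residue when $\ell \neq 0$, accounting for the dichotomy in the conjecture). Explicit computation of the residue—using Suzuki's evaluation of the Fourier--Whittaker coefficients of $\theta_4$ at squares, together with the local factors at $\lambda$ for each admissible $\beta$—yields a nonzero constant $b_\beta$ and the predicted main term $b_\beta X^{3/4}$.

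The second step, which is also the principal obstacle, is to shift the contour to $\operatorname{Re}(s) = 1/2 + \varepsilon$ and bound the shifted integral by $O_\varepsilon(X^{1/2+\varepsilon})$. Unconditionally, the Type-I and Type-II estimates used to prove Theorem \ref{mainthm} yield only $X^{5/6+\varepsilon}$. To reach $X^{1/2+\varepsilon}$, one expects to need either a GRH-type hypothesis for $L(s;\ell,\beta)$ or, in the spirit of \cite{DR}, a metaplectic Kuznetsov--Voronoi argument over $\mathbb{Z}[i]$ that recasts the dual sum as second moments of Fourier coefficients of quartic theta functions at squares, to which Suzuki's evaluation and the quadratic large sieve developed in this paper can be applied.

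The essential difficulty, and the feature that separates the quartic case from the cubic, is that $\theta_4$ has non-generic Whittaker components and its coefficients at non-square arguments remain opaque. Any trace-formula approach must therefore carry the cuspidal contribution on the metaplectic cover uniformly in the weight $\ell$, and must control the subtle interaction between the non-generic spectrum and the prime-supported test function, in order to rule out oscillatory contributions that would overwhelm the claimed $X^{1/2+\varepsilon}$ error.
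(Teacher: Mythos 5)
This statement is labelled a \emph{conjecture} in the paper, not a theorem; the authors supply no proof, and explicitly remark that it is ``well out of reach even under the assumption of the Generalised Riemann Hypothesis for Hecke $L$-functions over $\mathbb{Q}(i)$ with Gr\"{o}ssenkharakter'', pointing to the Type-II bottleneck described in \S\ref{overall}. Your proposal is a road-map rather than a proof, which you partly concede in your final paragraph, but one of its steps is outright false and would have to be abandoned rather than refined.

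The opening Perron identity, writing the von Mangoldt-weighted sum as $-\tfrac{1}{2\pi i}\int_{(2)}\tfrac{L'}{L}(s;\ell,\beta)\widehat{R}(s)X^{s}\,ds$ where $L(s;\ell,\beta)$ is the Dirichlet series with coefficients $\widetilde{g}_4(c)(\overline{c}/|c|)^{\ell}$ over $c\equiv\beta\ppmod{4}$, does not hold. The normalised Gauss sum $\widetilde{g}_4$ is only \emph{twisted} multiplicative (cf.\ \eqref{rel2}) and vanishes off squarefree moduli, so $L(s;\ell,\beta)$ has no Euler product; hence $-L'/L$ is not the Dirichlet series with coefficients $\widetilde{g}_4(c)\Lambda(c)$. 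Primes must be detected combinatorially, via Vaughan's identity, which is what the paper does for Theorem~\ref{mainthm}. After that decomposition, the pole at $s=3/4$ (in the normalisation of \eqref{littlefdef}) enters only through the Type-I contribution: the inner Dirichlet series is $\psi^{(4)}_{\beta}(s,1,\ell;\alpha)$ of Proposition~\ref{levelprop}, whose residue $p_{\beta}(1;\alpha)$ is evaluated through Suzuki's square formula \eqref{sqrel}. The putative constant $b_{\beta}$ would therefore emerge as an infinite sum over $\alpha$ of the residues $p_{\beta}(1;\alpha)$ weighted by the prime-detection coefficients, not as the residue of a single metaplectic $L$-function.

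Your remarks about the pole occurring only for $\ell=0$, and about non-generic Whittaker models obstructing a full understanding of the quartic theta coefficients, are consistent with \S\ref{thetasec} (and with Deligne's observation cited there). However, your suggestion that a GRH-type hypothesis might already yield the $O(X^{1/2+\varepsilon})$ error is contrary to the authors' stated position: the Type-II bilinear sums carry arbitrary weights and, via the quadratic large sieve (Theorem~\ref{quadsieve}), are capped at $X^{5/6+\varepsilon}$, which already dominates the conjectured main term $X^{3/4}$. No zero-free region alleviates this, in contrast to the cubic case where the main term $X^{5/6}$ sits at the same scale as the Type-II barrier and the conditional result of Dunn--Radziwi\l\l{} becomes possible.
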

The conjectured main term of size $\asymp X^{3/4}$ in the quartic case is substantially smaller than the main term of size $\asymp X^{5/6}$
in the cubic case \cite{DR}.
Conjecture \ref{quarticconj} is well out of reach even under the assumption of the 
Generalised Riemann Hypothesis for Hecke $L$-functions over $\mathbb{Q}(i)$ with Gr\"{o}ssencharakter.
One can see \S \ref{overall} for explanation of the bottleneck in the Type-II sums.

Before giving an overview of the paper in \S \ref{overall}, 
we briefly explain how the twisted sums that are the subject of Theorem \ref{mainthm} and Conjecture \ref{quarticconj}
are related to the equidistribution for \{$\widetilde{g}_4(\pi)\}_{\pi}$.
Weyl's criterion for equidistribution requires
that for each $k \in \mathbb{Z} \setminus \{0\}$,
\begin{equation} \label{moment}
\sum_{\substack{N(\pi) \leq X \\ \pi \equiv 1 \ppmod{\lambda^3}}} \widetilde{g}_4(\pi)^k=o_k \Big(\frac{X}{\log X} \Big) \quad \text{as} \quad
X \rightarrow \infty.
\end{equation}
We can discard the height two primes $\pi \equiv 1 \ppmod{\lambda^3}$
in \eqref{moment} i.e. $N(\pi)=p^2$ for $p \equiv 3 \ppmod{4}$ a rational prime,
since the contribution from such primes to \eqref{moment} is $O(X^{1/2})$. 
Without loss of generality we can assume the sum in \eqref{moment} is restricted to height one primes 
$\pi \equiv 1 \ppmod{\lambda^3}$ i.e $N(\pi)=p \equiv 1 \ppmod{4}$ is a rational prime.
The formulae \eqref{fourthpower} and \eqref{squarepower} imply that 
for height one primes 
$\pi \equiv 1 \ppmod{\lambda^3}$ and 
for $k \in \mathbb{Z} \setminus \{0\}$,
\begin{equation} \label{momentcases}
\widetilde{g}_4(\pi)^{k}=
\Big( \frac{\overline{\pi}}{|\pi|} \Big)^{-2 m}
\cdot
\begin{cases}
1 & \text{if} \quad k \equiv 0 \ppmod{4} \quad \text{with} \quad m=\tfrac{k}{4} \\
\widetilde{g}_4(\pi) & \text{if} \quad k \equiv 1 \ppmod{4} \quad \text{with} \quad m=\tfrac{k-1}{4} \\
-\big( \frac{\overline{\pi}}{|\pi|}  \big)^{-1}  \big(\frac{-1}{\pi} \big)_4\big( \frac{\overline{\pi}}{\pi}  \big)_4^{-2}   & \text{if} \quad k \equiv 2 \ppmod{4} \quad \text{with} \quad m=\tfrac{k-2}{4}  \\
-  \big( \frac{\overline{\pi}}{|\pi|}  \big)^{-1} \big(\frac{-1}{\pi} \big)_4\big( \frac{\overline{\pi}}{\pi}  \big)_4^{-2}\widetilde{g}_4(\pi) & \text{if} \quad k \equiv 3 \ppmod{4} \quad \text{with} \quad m=\tfrac{k-3}{4}
\end{cases}.
\end{equation}
From \eqref{momentcases} it follows that if $k \equiv 0,2 \ppmod{4}$, then \eqref{moment} is a standard consequence of
the zero-free region for $L$-functions with Gr\"{o}ssencharaktern. If 
$k \equiv 1 \ppmod{4}$, then it suffices to prove that for $\ell \in \mathbb{Z}$ and $\beta \in \{1,1+\lambda^3 \} \ppmod{4}$,
\begin{equation}\label{sum-of-gauss-sums-primes}
\sum_{\substack{c \in \mathbb{Z}[i] \\ c \equiv \beta \ppmod{4} \\ N(c) \leq X}} \widetilde{g}_4(c) \Big( \frac{\overline{c}}{|c|} \Big)^{\ell}
\Lambda(c)=o_{\ell} \Big ( \frac{X}{\log X} \Big).
\end{equation}
If $k \equiv 3 \ppmod{4}$, then we consider  \eqref{sum-of-gauss-sums-primes} with each term in the sum multiplied by $\big(\frac{\overline{c}}{c} \big)_4^{-2}$. We remark that the Gr\"{o}ssencharakter $\big(\frac{-1}{c} \big)_4$ is constant along the arithmetic progression $c \equiv \beta \ppmod{4}$. For simplicity we focus only on the case $k \equiv 1 \ppmod{4}$. 

\subsection{Overall strategy of the paper} \label{overall}
In this section we explain the main ideas of the paper without paying attention to technicalities.
Here we assume that quartic reciprocity over $\mathbb{Q}(i)$ is perfect (i.e. ignore congruence conditions modulo $4$), 
pretend that $\Gamma_1(\lambda^4) \lhd \operatorname{SL}_2(\mathbb{Z}[i])$ has one cusp, and so forth.
This section can be read independently of the rest of the paper.

For simplicity we will take $\ell=0$ in this sketch.
After detecting primes using Vaughan's identity \cite{Vau} we are faced with the sums,
\begin{align}
\sum_{\substack{ N(a) \sim A \\ N(b) \sim B \\ a,b \equiv 1 \ppmod{\lambda^3} }}
\rho_a \widetilde{g}_4(ab) 
R \Big( \frac{N(ab)}{X} \Big);  \label{t1} \\
\sum_{\substack{ N(a) \sim A \\ N(b) \sim B \\ a,b \equiv 1 \ppmod{\lambda^3}}} 
\rho_a \phi_b  \widetilde{g}_4(ab) R \Big( \frac{N(ab)}{X} \Big), \label{t2}
\end{align}
where $A,B \geq 1$, and
$\boldsymbol{\rho}=(\rho_{a})$ and $\boldsymbol{\phi}=(\phi_{b})$
are arbitrary $\mathbb{C}$-valued sequences. 
Both sums \eqref{t1} and \eqref{t2} 
are supported on $a,b \in \mathbb{Z}[i]$ such that $\mu^2(ab)=1$.
The sum in \eqref{t1} (resp. \eqref{t2}) is known as a Type-I
(resp. Type-II) sum. The decomposition of sums involving the von Mangoldt function into these 
types of multilinear sums can be found in \S \ref{vauidsec}.

The Chinese remainder theorem, quartic reciprocity, and change of variables
(cf. \eqref{rel1}--\eqref{rel3}) imply that (ignoring the factor involving $C(a,b)$ from \eqref{rel3}),
\begin{align} \label{tm}
\widetilde{g}_4(ab) &=
\widetilde{g}_4 (a)  \widetilde{g}_4 (b)  
\Big(\frac{a}{b} \Big)_2=\widetilde{g}_4(a) \widetilde{g}_4(a^2,b).
\end{align}
After applying the first equality in \eqref{tm} to \eqref{t2}, and relabelling the weights 
$\boldsymbol{\rho}=(\rho_{a})$ and $\boldsymbol{\phi}=(\phi_{b})$ appropriately,
the Type-II sum becomes
\begin{equation} \label{t22}
\sum_{\substack{ N(a) \sim A \\ N(b) \sim B \\ a,b \equiv 1 \ppmod{\lambda^3}}} 
\rho_a \mu^2(a) \phi_b \mu^2(b) \Big(\frac{a}{b} \Big)_2 R \Big( \frac{N(ab)}{X} \Big).
\end{equation}
After Mellin inversion of $R$,
the quadratic large sieve over $\mathbb{Q}(i)$ due to Onodera \cite{On} and Goldmakher--Louvel \cite{GL} can be readily applied to \eqref{t22}. This
yields an upper bound $\ll X^{5/6+\varepsilon}$ for $A \in [X^{1/3},X^{2/3}]$.
 The quadratic large sieve is recorded in \S \ref{quadsievesec}, and our
bounds for the Type-II sum appear in \S \ref{type2sec}.

The Type-I sums are more intricate than their counterparts in the cubic case \cite{DR,HB}.
After applying the second equality in \eqref{tm} to \eqref{t1},
and relabelling the weight $\boldsymbol{\rho}=(\rho_{a})$ appropriately,
the Type-I sum becomes
\begin{equation}  \label{t12}
\sum_{\substack{ N(a) \sim A \\ a \equiv 1 \ppmod{\lambda^3} }} 
\rho_a \mu^2(a) \sum_{\substack{N(b) \sim B \\ (b,a)=1  \\ b \equiv 1 \ppmod{\lambda^3} }}
\widetilde{g}_4(a^2,b)
R \Big( \frac{N(ab)}{X} \Big).
\end{equation}
After Mellin inversion of $R$ and shifting the contour to $\Re(s)=1/2+\varepsilon$,
one needs to understand the Dirichlet series,
\begin{equation*}
\widetilde{\psi}^{(4)}(s,\nu,0):=\sum_{\substack{c \in \mathbb{Z}[i] \\ c \equiv 1 \ppmod{\lambda^3}}} \frac{\widetilde{g}_4(\nu,c)}{N(c)^{s}}, \quad \text{for} \quad \Re(s)>1 \quad \text{and} \quad \nu \equiv 1 \ppmod{\lambda^3}.
\end{equation*}
These series have meromorphic continuation to all $\mathbb{C}$ and satisfy a functional equation $s \rightarrow 1-s$
because they essentially occur as the Fourier-Whittaker coefficients of Kubota's Eisenstein series $E^{(4)}(w,s,0)$, $w \in \mathbb{H}^3$, on the $4$-fold cover of $\operatorname{SL}_2(\mathbb{Z}[i])$
\cite{Kub1,Kub2,Dia}.
The function $\widetilde{\psi}^{(4)}(s,\nu,0)$ has at most 
a simple pole at $s=3/4$ in the half-plane $\sigma>1/2$, with residue denoted $\psi^{(4)}(\nu)$. For $\varepsilon>0$, the convexity bound is
\begin{align} \label{convexsketch}
\widetilde{\psi}^{(4)}&(s,\nu,0)  \ll N(\nu)^{(1/2)(1-\sigma)+\varepsilon} (|s|+1)^{3(1-\sigma)+\varepsilon}, \nonumber \\
& \text{for} \quad \sigma:=\Re(s) \geq 1/2+\varepsilon, \quad |s-3/4|>1/8,  
\quad \text{and} \quad \nu \equiv 1 \ppmod{\lambda^3}.
\end{align}
More details concerning Eisenstein series and these Dirichlet series can be found in \S \ref{dirsec}--\S \ref{metaeissec}.

Understanding the pole term in Type-I is subtle, 
and the occurrence of the $\widetilde{g}_4(a^2,b)$ in \eqref{t12} is crucial.
The residues $\psi^{(4)}(\nu)$ appear as the Fourier-Whittaker coefficients of the quartic theta function 
$\vartheta^{(4)}(w)$ (cf. \eqref{thetafourier} and \eqref{taurel}), and are in general not all determined!
This is in stark contrast to the case of the cubic theta function $\vartheta^{(3)}(w)$, where all the Fourier-Whittaker coefficients were
computed in closed form by Patterson \cite{Pat1}. Hecke theory and periodicity for the Fourier coefficients of theta functions
on the $n$-fold cover of $\operatorname{GL}_2$ ($n \geq 2$) essentially reduces the problem to determining
the Fourier coefficients at indices $1,\pi,\ldots,\pi^{n-1}$, for each $\pi$ prime. In general there are 
$n/2 -1$ (respectively $(n-1)/2 -1$) undetermined coefficients among this collection when $n$ is even (respectively odd),
one can consult the heuristic \cite[pg.75--78]{Hoff} and \cite[pg.~1904--1907]{BrHo}.
Back to the quartic case $n=4$, Suzuki \cite{Suz1} managed to determine partial information, such as 
\begin{equation}
\psi^{(4)}(a^2)= \frac{\overline{ \widetilde{g}_4(a)}}{N(a)^{1/4}}, \quad \text{for} \quad a \equiv 1 \ppmod{\lambda^3} \quad \text{squarefree}.
\end{equation}
Thus $\psi^{(4)}(a^2) \ll N(a)^{-1/4}$, and this
leads to a contribution of $X^{3/4+\varepsilon}$ to Type-I. 
Note that $\nu \rightarrow a^2$ in \eqref{convexsketch} gives 
an increase in conductor for the relevant Dirichlet series in the $a$-aspect that does not occur in the cubic case \cite{HB},
and it is insufficient for Theorem \ref{mainthm} to use only
the convex bound $\psi^{(4)}(a^2) \ll N(a)^{1/4+\varepsilon}$ as in \cite{Pat3}. More information about metaplectic theta functions, their history, and 
Patterson's conjectures about them
appear in \S \ref{thetasec} and the references therein.

To handle the contour integral near the critical line,
we use an approximate functional equation and the quadratic large sieve in \S \ref{avgtype1sec}
to essentially prove that 
\begin{equation} \label{secondmomentsketch}
\sum_{\substack{N(a) \leq A \\ a \equiv 1 \ppmod{\lambda^3}}} \mu^2(a) |\widetilde{\psi}^{(4)}(1/2+\varepsilon+it,a^2,0) |^2 \ll_{\varepsilon} A^{3/2+\varepsilon} (|t|+1)^{3+\varepsilon}.
\end{equation}
The bound in \eqref{secondmomentsketch} saves $A^{1/2}$ over the convexity bound on average,
and leads to a contribution 
of $X^{1/2+\varepsilon} A^{3/4} \ll X^{3/4+\varepsilon}$ to Type-I
for level of distribution $A \leq X^{1/3}$. This is sufficient for our Theorem \ref{mainthm}.
With additional work (cf. Remark \ref{lindelofremark}), it seems likely that one could prove a 
Lindel\"{o}f-on-average (in $a$) bound $\ll A^{1+\varepsilon} (|t|+1)^{3+\varepsilon}$ in \eqref{secondmomentsketch},
and this would lead to a contribution $X^{1/2+\varepsilon} A^{1/2} \ll X^{3/4+\varepsilon}$ to Type-I
for an increased level of distribution $A \leq X^{1/2}$.
This makes no significant difference in view of the $X^{3/4+\varepsilon}$ contribution from the pole term,
and the $X^{5/6+\varepsilon}$-bottleneck in Type-II, and so
we refrain from this additional work. 

With more effort one could remove the smoothing in Theorem \ref{mainthm}
at a power saving loss using the
hybrid large sieve of Jutila \cite{Jut} (adapted to $\mathbb{Q}(i)$). We opt for the best possible exponent within current technology using a smoothing.

It seems quite difficult to obtain an asymptotic upper bound of $X^{5/6+\varepsilon}$-quality for sums of $n$th order Gauss sums over primes
with $n \geq 5$ fixed. The main obstruction is that the relevant Dirichlet series in Type-I has Dirichlet coefficients $\widetilde{g}_n(a^{n-2},c)$,
and consequently has a functional equation that behaves like a $\operatorname{GL}_{n-2}$-object in $a$ (leading to an increased conductor and difficulty in controlling
the error terms in Type-I).

\subsection{Acknowledgements} This paper grew out of discussions between the four authors at the ANTeater'22 Summer School that took place at UC Irvine in August 2022. We thank the organizer Alexandra Florea and the UC Irvine for their hospitality. We thank the referee for their meticulous comments on our manuscript, and also thank Maksym Radziwi{\l\l} for helpful comments. The second author thanks Adrian Diaconu and Jeffrey Hoffstein for helpful conversations regarding metaplectic theta functions.

\section{Conventions}
Dependence of implied constants on parameters will be indicated in statements of results,
but partially suppressed throughout the body of the paper i.e. in the proofs. Implied constants in the body of the paper are
allowed to depend on $\varepsilon>0$ (possibly different in each instance), 
and the smooth compactly supported function $R:(0,\infty) \rightarrow \mathbb{C}$.
Other important dependencies will be indicated.

For $a \in \mathbb{N}$ and $A>0$, we use $a \sim A$ to mean $A<a \leq 2A$. 
Whenever we write $r \mid q$ with $0 \neq r,q \in \mathbb{Z}[i]$ and $q \equiv 1 \ppmod{\lambda^{3}}$,
it is our convention that $r \equiv 1 \ppmod{\lambda^3}$.

\section{Quartic Gauss sums} \label{quarticsec}
Our basic references for this section are \cite[Chap.~9]{IR} and \cite[\S 2]{Dia}.
Let $\mathbb{Q}(i)$ be the Gaussian imaginary quadratic field with ring of integers $\mathbb{Z}[i]$,
and 
$N(x):=N_{\mathbb{Q}(i)/\mathbb{Q}}(x)=|x|^2$ denote the norm function.
It is well known that $\mathbb{Q}(i)$ has class number one, discriminant $-4$, and 
unit group $\mathbb{Z}[i]^{\times}=\{\pm 1, \pm i \}$. 
Let $\lambda:=1+i$, the unique prime lying above $2$ in $\mathbb{Q}(i)$.
Each ideal $0 \neq \mathfrak{c} \unlhd \mathbb{Z}[i]$
is principal, and if $(\mathfrak{c},\lambda)=1$, then $\mathfrak{c}$ has a unique primary generator 
i.e. $\mathfrak{c}=(c)$ with $c \equiv 1 \ppmod{\lambda^3}$.
Each $c \equiv 1 \ppmod{\lambda^3}$ satisfies either 
$c \equiv 1 \ppmod{4}$ or $c \equiv 1+\lambda^3 \ppmod{4}$. The rational prime $2=-i \lambda^2$ is the only 
rational prime that ramifies in $\mathbb{Z}[i]$.
If $p \equiv 1 \ppmod{4}$ is a rational prime, then $p=\pi \overline{\pi}$ in $\mathbb{Z}[i]$
where $\pi, \overline{\pi} \in \mathbb{Z}[i]$ are distinct primes and
$N(\pi)=N(\overline{\pi})=p$. If
$p \equiv 3 \ppmod{4}$ is a rational prime, then $p=\pi$ remains inert in $\mathbb{Z}[i]$
and $N(\pi)=p^2$. Thus we have $N(\pi) \equiv 1 \ppmod 4$ for all primes $\pi$ with $(\pi) \neq (\lambda)$.

For any prime $\pi \in \mathbb{Z}[i]$ with $(\pi) \neq (\lambda)$ and $\alpha \in \mathbb{Z}[i]$ with
$(\alpha,\pi)=1$, the quartic (resp. quadratic) residue symbol is defined by
\begin{equation*}
\Big(\frac{\alpha}{\pi}  \Big)_4  \equiv \alpha^{\frac{N(\pi)-1}{4}} \ppmod{\pi}, 
\quad \text{resp.} \quad \Big(\frac{\alpha}{\pi}  \Big)_2  \equiv \alpha^{\frac{N(\pi)-1}{2}} \ppmod{\pi}.
\end{equation*}
The quartic symbol (resp. quadratic symbol) takes values in $\{\pm 1, \pm i \}$ (resp. $\{\pm 1\}$).
For $\alpha,\gamma \in \mathbb{Z}[i]$ with $(\gamma,\lambda)=1$, $(\alpha,\gamma)=1$, and 
prime factorisation $\gamma=\pi_1 \cdots \pi_k$, one extends these symbols multiplicatively,
\begin{equation*}
\Big( \frac{\alpha}{\gamma} \Big)_4:= \prod_{i=1}^{k} \Big(\frac{\alpha}{\pi_i} \Big)_4, \quad \text{resp.} \quad 
\Big( \frac{\alpha}{\gamma} \Big)_2:= \prod_{i=1}^{k} \Big(\frac{\alpha}{\pi_i} \Big)_2.
\end{equation*}
Furthermore,
\begin{equation*}
\Big(\frac{\alpha}{\gamma} \Big)_4=\Big(\frac{\alpha}{\gamma} \Big)_2:=0 \quad \text{for} \quad
\alpha, \gamma \in \mathbb{Z}[i] \quad \text{s.t.} \quad (\alpha,\gamma) \neq 1,
\quad (\gamma,\lambda)=1. 
\end{equation*}
Observe that 
\begin{equation} \label{quartquad}
\Big( \frac{\alpha}{\gamma} \Big)_4^2=\Big( \frac{\alpha}{\gamma} \Big)_2 \quad \text{for} \quad \alpha,\gamma \in \mathbb{Z}[i] 
\quad \text{s.t.} \quad (\gamma,\lambda)=1.
\end{equation}

For 
$\alpha,\gamma \in \mathbb{Z}[i]$ non-units with $\alpha \equiv \gamma \equiv 1 \ppmod{\lambda^3}$
and $(\alpha,\gamma)=1$, 
the biquadratic reciprocity law \cite[\S 9.9, Theorem~2]{IR}  states that
\begin{equation} \label{bilaw}
\Big( \frac{\alpha}{\gamma} \Big)_4=(-1)^{C(\alpha,\gamma)} \Big( \frac{\gamma}{\alpha} \Big)_4,
\end{equation}
where 
\begin{equation} \label{Cdef}
C(\alpha,\gamma)=\frac{(N(\alpha)-1)}{4} \frac{(N(\gamma)-1)}{4}.
\end{equation}
There are supplementary laws for the ramified primes and units.
If
\begin{equation*}
\gamma=1+a_4 \lambda^4+a_5 \lambda^5+a_6 \lambda^6+\cdots \quad \text{with} \quad a_i \in \{0,1\},
\end{equation*}
we have
\begin{equation*}
\Big(\frac{\lambda}{\gamma} \Big)_4=i^{-a_4+2a_6} \quad \text{and} \quad \Big( \frac{i}{\gamma} \Big)_4=(-1)^{a_4+a_5}.
\end{equation*}
If 
\begin{equation*}
\gamma=1+\lambda^3+a_4 \lambda^4+a_5 \lambda^5+a_6 \lambda^6+\cdots \quad \text{with} \quad a_i \in \{0,1\},
\end{equation*}
then  
\begin{equation*}
\Big( \frac{\lambda}{\gamma} \Big)_4=-i^{-a_4+2a_6} \quad \text{and} \quad \Big( \frac{i}{\gamma} \Big)_4=i (-1)^{a_4+a_5}.
\end{equation*}
Note that the formula for $\big( \frac{i}{\gamma} \big)_4$ in the display immediately above corrects a typographic error in
\cite{Suz1} and \cite{Dia}.

Let $e(x):=e^{2 \pi i x}$ for $x \in \mathbb{R}$ and
$\check{e}(z):=e(z+\overline{z})$ for $z \in \mathbb{C}$. 
For $c \in \mathbb{Z}[i]$ with $(c,\lambda)=1$, and $\nu \in \lambda^{-2} \mathbb{Z}[i]$,
the quartic Gauss sum over $\mathbb{Z}[i]$
is defined as
\begin{equation*}
g_4(\nu,c):=\sum_{d \ppmod{c}} \Big( \frac{d}{c} \Big)_4 \check{e} \Big( \frac{\nu d}{c} \Big).
\end{equation*}
We write
$g_4(c):=g_4(1,c)$.
For $(r,c)=1$,
\begin{equation} \label{rel1}
g_4(r \nu,c)=\overline{\Big( \frac{r}{c} \Big)_4} g_4(\nu,c).
\end{equation}
For $\nu,c,c^{\prime} \in \mathbb{Z}[i]$ with $(cc^{\prime},\lambda)=1$ and $(c,c^{\prime})=1$, 
the Chinese Remainder Theorem implies the twisted multiplicativity relation
\begin{equation} \label{rel2}
g_4(\nu,cc^{\prime})= \Big( \frac{c}{c^{\prime}} \Big)_4 \Big( \frac{c^{\prime}}{c} \Big)_4 g_4(\nu,c) g_4(\nu,c^{\prime}).
\end{equation}
Note that for $(\nu,c c^{\prime})=1$ the relation \eqref{rel2} also holds for
$(c,c^{\prime}) \neq 1$ (both sides are $0$).
It follows from \eqref{bilaw}, \eqref{rel1}, and \eqref{rel2} that for $(c,c^{\prime})=1$ and 
$c \equiv c^{\prime} \equiv 1 \pmod{\lambda^3}$ we have
\begin{equation} \label{rel3}
g_4(\nu,cc^{\prime})=(-1)^{C(c^{\prime},c)} g_4(\nu (c^{\prime})^2,c) g_4(\nu,c^{\prime}).
\end{equation}
By \eqref{rel1} and \eqref{rel2} it suffices to understand $g_4(\pi^k,\pi^{\ell})$ for primes
$\pi \equiv 1 \ppmod{\lambda^3}$ and $k,\ell \in \mathbb{N}_{\geq 0}$.
Local computations yield 
\begin{equation} \label{rel4}
g_4(\pi^k,\pi^{\ell})=\begin{cases}
1 & \text{if } \ell=0, \\
N(\pi)^k g_4(\pi) & \text{if } \ell=k+1, \quad k \equiv 0 \ppmod{4},\\
N(\pi)^k g_2(\pi) & \text{if } \ell=k+1, \quad k \equiv 1 \ppmod{4}, \\ 
N(\pi)^k \big(\frac{-1}{\pi} \big)_4 \overline{g_4(\pi)} & \text{if } \ell=k+1, \quad k \equiv 2 \ppmod{4}, \\
-N(\pi)^k & \text{if } \ell=k+1, \quad k \equiv 3 \ppmod{4}, \\
\varphi(\pi^{\ell}) & \text{if } k \geq \ell, \quad \ell \equiv 0 \ppmod{4}, \\
0 & \text{otherwise},
\end{cases}
\end{equation}
where
\begin{equation*}
g_2(\nu,c):=\sum_{d \ppmod{c}} \Big( \frac{d}{c} \Big)_2 \check{e} \Big( \frac{\nu d}{c} \Big),
\end{equation*}
is the (auxiliary) quadratic Gauss sum.
We write 
$g_2(c):=g_2(1,c)$. 

For degree $2$ primes in $\mathbb{Z}[i]$ i.e. $\pi \equiv 1 \ppmod{\lambda^3}$ prime such that 
$N(\pi)=p^2$ with $p \equiv 3 \ppmod{4}$ a rational prime, 
we have that
\begin{equation} \label{trivquadquar}
\left|g_4(\pi)\right|=\left|g_2(\pi)\right|=p=N(\pi)^{1/2}.
\end{equation}
If $\pi \equiv 1 \ppmod{\lambda^3}$ is a degree $1$ prime in $\mathbb{Z}[i]$, then 
Gauss' evaluation is
\begin{equation} \label{quad2}
 g_2(\pi)=\Big(\frac{-1}{\pi} \Big)_4 N(\pi)^{1/2}.
\end{equation}
If $\pi \equiv 1 \ppmod{\lambda^3}$ is a degree $1$ prime in $\mathbb{Z}[i]$,
then we have the fourth power formula \cite[Proposition~9.9.5]{IR},
\begin{equation} \label{fourthpower}
g_4(\pi)^4= \pi^3 \overline{\pi}.
\end{equation}
and the square formula \cite[Proposition~9.10.1]{IR} implies that,
\begin{equation} \label{squarepower}
g_4(\pi)^2=-\Big( \frac{-1}{\pi} \Big)_4\Big( \frac{\overline{\pi}}{\pi}  \Big)_4^{-2}  N(\pi)^{1/2} \pi.
\end{equation}
Observe that \eqref{rel1}--\eqref{rel2}, \eqref{rel4}--\eqref{trivquadquar}, and \eqref{fourthpower}
imply that for $c \equiv 1 \ppmod{\lambda^3}$ we have
\begin{equation} \label{sqrootcancel}
|g_4(c)|=\mu^2(c) N(c)^{1/2}.
\end{equation}
We denote the normalised quartic Gauss sum (with shift) by
\begin{equation} \label{tildedef}
\widetilde{g}_4(\nu,c):=N(c)^{-1/2} g_4(\nu,c).
\end{equation}

\section{Metaplectic Eisenstein series and theta functions} \label{metasec}
Here we introduce the Dirichlet series formed from quartic Gauss sums and explain
their connection to metaplectic Eisenstein series on the 4-fold cover of $\operatorname{GL}_2$ over $\mathbb{Q}(i)$
and their theta functions.  These notions were first introduced by Kubota \cite{Kub1,Kub2}.

\subsection{Dirichlet series} \label{dirsec}
For $0 \neq \nu \in \lambda^{-2} \mathbb{Z}[i]$, $\ell \in \mathbb{Z}$, and $\beta \in \{1,1+\lambda^3 \} \ppmod{4}$,
consider the Dirichlet series,
\begin{equation} \label{dir}
\psi^{(4)}_{\beta}(s,\nu,\ell):=\sum_{\substack{c \in \mathbb{Z}[i] \\ c \equiv \beta \ppmod{4}}} 
\frac{g_4(\nu,c) ( \frac{\overline{c}}{|c|})^{\ell} }{N(c)^s}. 
\end{equation}
These series converge absolutely for $\sigma:=\Re(s)>3/2$. We write
$\psi^{(4)}_{\beta}(s,\nu,0):=\psi^{(4)}_{\beta}(s,\nu)$.
The functions $\psi^{(4)}_{\beta}(s,\nu,\ell)$
have meromorphic continuation to all of $\mathbb{C}$.
Both families are holomorphic in the half-plane $\sigma>1$, unless 
$\ell=0$, when $\psi^{(4)}_{\beta}(s,\nu)$ has at most a possible simple pole at $s=5/4$
in the half-plane $\sigma>1$. For $0 \neq \nu \in \lambda^{-2} \mathbb{Z}[i]$, let
\begin{equation} 
\psi^{(4)}_{\beta}(\nu):=\mathrm{Res}_{s=5/4} \psi^{(4)}_{\beta}(s,\nu). \label{res1}  
\end{equation}
These residues are shrouded in mystery.
More details on the meromorphic continuation and residues are provided in 
\S \ref{metaeissec} and \S \ref{thetasec} respectively.

\begin{remark}
Suzuki \cite[(1.1)]{Suz1} normalised the Dirichlet series $\psi^{(4)}_{1+\lambda^3}(s,\nu,0)$ with an extra $-$ sign.
In terms of sign normalisation we follow Diaconu \cite[(2.14)]{Dia} and do not include this normalisation in our 
definition \eqref{dir} for $\beta=1+\lambda^3$. 
We account for this sign discrepancy when recording various results of Suzuki \cite{Suz1} later in this Section.
\end{remark}

\subsection{Group action on $\mathbb{H}^3$ and Laplacian} \label{H3sec}
Let $\mathbb{H}^{3}$ denote the hyperbolic 3-space $\mathbb{C} \times \mathbb{R}^{+}$.
Embed $\mathbb{C}$ and $\mathbb{H}^3$ in the Hamilton quaternions  by identifying 
$i=\sqrt{-1}$ with $\hat{i}$ and 
$w=(z(w),v(w))=(z,v)=(x+iy,v) \in \mathbb{H}^3$ with $x+y \hat{i}+v \hat{j}$, where 
$1,\hat{i},\hat{j},\hat{k}$ denote the unit quaternions.  
A point $w=(z,v)$ is represented by the matrix $w=\pmatrix z {-v} {v} {\overline{z}}$,
and $u \in \mathbb{C}$ is represented by the matrix $\widetilde{u}=\pmatrix u 0 0 {\overline{u}}$.
The continuous action 
of $\operatorname{SL}_2(\mathbb{C})$ on $\mathbb{H}^3$
is given by
\begin{equation*}
\gamma w=(\widetilde{a}w+\widetilde{b})(\widetilde{c}w+\widetilde{d})^{-1}, \quad \gamma=\begin{pMatrix}
a b
c d 
\end{pMatrix} \in \operatorname{SL}_2(\mathbb{C}), \quad \text{and} \quad
w \in \mathbb{H}^3.
\end{equation*}
In coordinates,
\begin{equation*}
\gamma w= \bigg( \frac{(az+b) \overline{(cz+d)}+a \overline{c} v^2}{|cz+d|^2 +|c|^2 v^2}, 
\frac{v}{|cz+d|^2+|c|^2 v^2} \bigg),
\quad w=(z,v).
\end{equation*}
The action of $\operatorname{SL}_2(\mathbb{C})$ on
$\mathbb{H}^3$ is transitive, and the stabiliser of a point is $\operatorname{SU}_2(\mathbb{C})$.
The Laplace operator
$\Delta:=v^2( \partial^2/\partial x^2+
\partial^2/\partial y^2+\partial^2/\partial v^2)-v \partial/\partial v$
acts on $C^{\infty}(\mathbb{H}^3)$
and commutes with the action of $\operatorname{SL}_2(\mathbb{C})$ on $C^{\infty}(\mathbb{H}^3)$.

\subsection{Congruence subgroup $\Gamma_1(\lambda^4)$ and Kubota multiplier} \label{multsec}
Consider the congruence subgroup
\begin{equation*}
\Gamma_1(\lambda^4):= \{ \gamma \in \operatorname{SL}_2(\mathbb{Z}[i]): \gamma \equiv I \ppmod{\lambda^4} \}.
\end{equation*}
It has finite volume with respect to the $\operatorname{SL}_2(\mathbb{C})$-invariant
Haar measure $v^{-3} dx dy dv$ on $\mathbb{H}^3$. The Kubota multiplier 
$\chi: \Gamma_1(\lambda^4) \rightarrow \{\pm 1, \pm i\}$ is defined by
\begin{equation*}
\chi(\gamma)=\begin{cases}
\big( \frac{c}{a} \big)_4  & \text{if } c \neq 0, \\
1 & \text{if } c=0.
\end{cases}
\end{equation*}
Biquadratic reciprocity \eqref{bilaw} implies that $\chi$ is a character on $\Gamma_1(\lambda^4)$.
Let $\kappa \in P(\Gamma_1(\lambda^4)) \subset \mathbb{Q}(i) \cup \{\infty\}$ be a cusp of $\Gamma_1(\lambda^4)$,
$\Gamma_{\kappa}$ denote the stabiliser of $\kappa$
i.e. 
\begin{equation*}
\Gamma_{\kappa}=\{ \gamma \in \Gamma_1(\lambda^4) : \gamma \kappa=\kappa \}.
\end{equation*}
A cusp $\kappa \in P(\Gamma_1(\lambda^4))$ is called essential if the restriction of $\chi$ to $\Gamma_{\kappa}$ is trivial.
For each cusp $\kappa=\alpha/\gamma$ (where $\infty = 1/0$ and $\gamma=\pmatrix a b c d$), choose a scaling matrix $\sigma_{\kappa}=
\pmatrix \alpha \beta \gamma \delta \in \operatorname{SL}_2(\mathbb{Z}[i])$ 
i.e. $\sigma_{\kappa}(\infty)=\alpha/\gamma$. We have $\Gamma_{\kappa}=\sigma_{\kappa} \Gamma_{\infty} \sigma_{\kappa}^{-1}$,
where
\begin{equation*}
\Gamma_{\infty}= \big \{ \pmatrix 1 \mu 0 1 : \mu \in \lambda^{4} \mathbb{Z}[i] \big \}.
\end{equation*}
A complete set of inequivalent essential cusps \cite[Prop.~1]{Suz1} consists of 
\begin{equation} \label{cusps}
\left \{
\begin{array}{cccccccc}
0, & \frac{\lambda^4}{1+\lambda^3}, & & & & & & \\ [5pt]
\lambda^2, & i \lambda^2, & \frac{\lambda^2}{1+\lambda^3}, & \frac{i \lambda^2}{1+\lambda^3}, & \lambda^3, & \frac{\lambda^3}{1+\lambda^3}, & & \\ [5pt]
1, & \frac{1}{-1}, & 1+\lambda^3, & \frac{1+\lambda^3}{-1}, & \frac{1}{1+\lambda^3}, & \frac{1}{-1-\lambda^3}, & \frac{1+\lambda^3}{1-\lambda^3}, & \frac{1+\lambda^3}{-1+\lambda^3} \\ 
[5pt]
\infty, & \frac{1+\lambda^3}{\lambda^4}, \\ [5pt]
\frac{1}{\lambda^2}, & \frac{1+\lambda^3}{\lambda^2}, & \frac{1}{i \lambda^2}, & \frac{1+\lambda^3}{i \lambda^2} & \frac{1}{\lambda^3}, & \frac{1+\lambda^3}{\lambda^3}
\end{array}.
\right.
\end{equation}

\subsection{Metaplectic Eisenstein series} \label{metaeissec}
Here we follow \cite[\S2-\S3]{Pat1} and \cite[\S 2]{Dia}.
For any $2 \times 2$ matrix $M=(m_{ij})$, consider its $\ell$-fold tensor power
$M_\ell$. Let $\vec{i}=(i_1,\ldots,i_{\ell}) \in \{1,2\}^{\ell}$ and $\vec{j}=(j_1,\ldots,j_{\ell}) \in \{1,2\}^{\ell}$ be multi-indices.
The entries of the $2^{\ell} \times 2^{\ell}$ matrix $M_{\ell}:=M^{\otimes \ell}$ are given by 
\begin{equation*}
m_{\vec{i} \hspace{0.025cm} \vec{j}}^{(\ell)}=\prod_{r=1}^{\ell} m_{i_r j_r}.
\end{equation*}
Set $M_0:=1$. We also have $(M^{*})_{\ell}=(M_{\ell})^{*}$, where $^{*}$ denotes the conjugate transpose of the matrix,
and we write $M^{*}_{\ell}$ for both. Also note that $(MN)_{\ell}=M_{\ell} N_{\ell}$.

For $\gamma=\pmatrix a b c d \in \operatorname{SL}_2(\mathbb{C})$ 
and $w \in \mathbb{H}^3$ given in matrix form, let 
\begin{equation*}
j(\gamma,w):=\frac{\widetilde{c}w+\widetilde{d}}{\text{det}(\widetilde{c}w+\widetilde{d})^{1/2}}.
\end{equation*}
We the have the relations
\begin{equation*}
j(\gamma \gamma^{\prime},w)=j(\gamma,\gamma^{\prime}w) j(\gamma^{\prime},w),
\end{equation*}
and
\begin{equation*}
j(\gamma,w)^{*}=j(\gamma,w)^{-1}.
\end{equation*}
We write $\vec{1}$ and $\vec{2}$ for the multi-indices $(1,1,\ldots,1)$
and $(2,2,\ldots,2)$ respectively, and $j_{\ell}(\gamma,w)$ (resp. $j^{*}_{\ell}(\gamma,w)$)
for $j(\gamma,w)_{\ell}$ (resp. $j(\gamma,w)_{\ell}^{*}$).
 
Let $P^{\prime}(\Gamma_1(\lambda^4))=\{\kappa_1,\ldots,\kappa_{24} \}$ be the complete set of inequivalent 
essential cusps of $\Gamma_1(\lambda^4)$ given in \eqref{cusps}.
Let $\kappa_1=\infty=1/0$ and write $\kappa_i=\alpha_i/\gamma_i$ where $\alpha_i$ and $\gamma_i$ are exactly
as given in \eqref{cusps}. We
take $\gamma_i=1$ if $\kappa_i=0$.
Set $\Gamma_i:=\Gamma_{\kappa_i}$ and $\sigma_{i}:=\sigma_{\kappa_i}$.
The Eisenstein series on $\Gamma_1(\lambda^4)$ attached to the
essential cusp $\kappa_i \in P^{\prime}(\Gamma_1(\lambda^4))$ is given by 
\begin{equation*}
E^{(4)}_{i}(w,s,\ell):=\sum_{\gamma \in \Gamma_{i} \backslash \Gamma_1(\lambda^4) } \overline{\chi(\gamma)}
j^{*}_{\ell}(\sigma^{-1}_i \gamma,w) v(\sigma_{i}^{-1} \gamma w)^s, \quad w=(z(w), v(w)) \in \mathbb{H}^3, \quad \sigma>2.
\end{equation*}
The series on the right side converges absolutely for $\sigma>2$.
The Eisenstein series satisfy the transformation law
\begin{equation*}
E^{(4)}_{i}(\gamma w,s,\ell)=\chi(\gamma) j_{\ell}(\gamma,w) E^{(4)}_{i}(w,s,\ell), \quad \text{for all} \quad \gamma \in \Gamma_1(\lambda^4),
\end{equation*}
and each $E_i(w,s,0)$ is an eigenfunction of the Laplacian $\Delta$.
For $u \in \mathbb{C}$ let 
\begin{equation} \label{Kintdef}
K(u,s,\ell):=\int_{\mathbb{C}} (|z|^2+1)^{-s-\ell/2} \pMatrix z {-1} {1} {\overline{z}}_{\ell}^{*} \check{e}(-uz) dx dy. 
\end{equation}
In particular, 
\begin{equation} \label{KBesseleval}
K(u,s,0)=(2 \pi)^s |u|^{s-1} \Gamma(s)^{-1} K_{s-1}(4 \pi |u|),
\end{equation}
where $K_{\xi}(\cdot)$ is the standard $K$-Bessel function of order $\xi \in \mathbb{C}$.

The Eisenstein series $E_i^{(4)}$ has the Fourier expansion at essential cusp $\kappa_j \in P^{\prime}(\Gamma_1(\lambda^4))$,
\begin{align} \label{eisfourier}
E^{(4)}_{i}(\sigma_j w,s,\ell)&=\delta_{ij} I_{\ell} v(w)^s+v(w)^{2-s} K(0,s,\ell) \phi^{(4)}_{ij}(s,0,\ell)  \nonumber \\
&+\sum_{\substack{\nu \in \lambda^{-2} \mathbb{Z}[i] \\ \nu \neq 0 }} v(w)^{2-s} K \Big( \frac{\nu v(w)}{\lambda^4},s,\ell \Big) \phi^{(4)}_{ij}(s,\nu,\ell) \check{e} \Big ( \frac{\nu z(w)}{\lambda^4} \Big),
\end{align}
where for $\nu \in \lambda^{-2} \mathbb{Z}[i]$ ($\nu=0$ is allowed in this definition) we have
\begin{align} \label{phiexp}
\phi^{(4)}_{ij}(s,\nu,\ell):=\mathcal{V}^{-1} \sum_{\substack{\gamma \in \Gamma_i \backslash \Gamma / \Gamma_j \\ c_{\gamma} \neq 0 }} 
\overline{\chi(\gamma)} \check{e} \Big( \frac{\nu d_{\gamma}}{\lambda^4 c_{\gamma}} \Big) (\widetilde{c}_{\gamma})^{*}_{\ell} & N(c_{\gamma})^{-s-\ell/2},  \quad \sigma>2,   \nonumber
\end{align}
and $\gamma=\sigma_i \pmatrix * *{ c_{\gamma}} {d_{\gamma}} \sigma^{-1}_j$,
and $\mathcal{V}$ denotes the volume of $\mathbb{C}/\lambda^4 \mathbb{Z}[i]$ with respect to $dxdy$.
Observe that each $\phi^{(4)}_{ij}(s,\nu,\ell)$ is a diagonal $2^{\ell} \times 2^{\ell}$-matrix
for $1 \leq i,j \leq 24$ integers.  Explicit formulae for the entries of $K(0,s,\ell)$ in \eqref{Kintdef}
are given in \cite[(2.28)--(2.29)]{Dia}.

Let
\begin{equation} \label{def-zeta-lambda}
\zeta_{\mathbb{Q}(i),\lambda}(s,\ell):=\sum_{m \equiv 1 \ppmod{\lambda^{3}}} \frac{\big( \frac{\overline{m}}{|m|} \big)^{4 \ell}}{N(m)^s}, \quad \sigma>1,
\end{equation}
be the Hecke $L$-function (with local factor at $\lambda$ removed) associated to the Gr\"{o}ssencharakter $\big( \frac{\overline{m}}{|m|} \big)^{4 \ell}$.

The meromorphic continuation and functional equations for the 
$E_i^{(4)}(w,s,\ell)$
and their Fourier-Whittaker coefficients $\phi^{(4)}_{ij}(s,\nu,\ell)$ follow as in Selberg \cite{Sel}.
Many analytic properties of Eisenstein series
are determined by their constant terms
using the Maass-Selberg formula \cite{Sel,Pat1}. Both 
$E^{(4)}_{i}(w,s,\ell)$ and $\phi^{(4)}_{ij}(s,\nu,\ell)$ have meromorphic continuation to
all of $\mathbb{C}$, and the poles of $E^{(4)}_{i}(w,s,\ell)$ are amongst those of
$K(0,s,\ell) \phi^{(4)}_{ij}(s,0,\ell)$ (cf. \eqref{eisfourier}) for $j=1,\ldots,24$. These Eisenstein series 
also satisfy a vector functional equation.
Set
\begin{equation} \label{topvec}
\boldsymbol{E}^{(4)}(w,s,\ell):=(E_i^{(4)}(w,s,\ell))_i,
\end{equation}
and 
\begin{equation} \label{Phidef}
\boldsymbol{\Phi}^{(4)}(s,\ell):=(K(0,s,\ell) \phi^{(4)}_{ji}(s,0,\ell)).
\end{equation}
The object in \eqref{topvec} (resp. \eqref{Phidef})
is a $1 \times 24$ row vector (resp. $24 \times 24$ matrix), with each entry in both being a $2^{\ell} \times 2^{\ell}$ matrix.
We have the functional equation
\begin{equation} \label{Efunceq}
\boldsymbol{E}^{(4)}(w,s,\ell)=\boldsymbol{E}^{(4)}(w,2-s,\ell) \cdot \boldsymbol{\Phi}^{(4)}(s,\ell),
\end{equation}
where 
\begin{equation*}
\boldsymbol{\Phi}^{(4)}(s,\ell) \boldsymbol{\Phi}^{(4)}(2-s,\ell)=I.
\end{equation*}

Suzuki \cite[Prop.~2]{Suz1} and Diaconu \cite[pg.~650-652]{Dia} relate elements of
the matrix of functions $\phi^{(4)}_{ij}(s,\nu,\ell)$ to the Dirichlet series $\psi^{(4)}_{\beta}(s,\nu,\ell)$
from \eqref{dir}. Recall that $\kappa_1=\infty=1/0$, and for any $0 \neq \nu \in \lambda^{-2} \mathbb{Z}[i]$ and 
$i=1,\ldots,24$, let
\begin{equation} \label{psidoubleindex}
\psi^{(4)}_{i1}(s,\nu,\ell):=\begin{cases}
\phi^{(4)}_{i1}(s,\nu,\ell)_{\vec{1} \hspace{0.025cm} \vec{1}}  & \text{if} \quad \ell>0, \\
\phi^{(4)}_{i1}(s,\nu,-\ell)_{\vec{2} \hspace{0.025cm} \vec{2}} & \text{if} \quad \ell<0, \\
\phi^{(4)}_{i1}(s,\nu,0) & \text{if} \quad \ell=0.
\end{cases}
\end{equation} \

Let $1 \leq i_1,i_2 \leq 24$ be such that 
$\kappa_{i_1}=0=0/1$ and $\kappa_{i_2}=\tfrac{\lambda^4}{1+\lambda^3}$. Let $\nu \in \lambda^{-2} \mathbb{Z}[i]$.
Then by \cite[(2.37)]{Dia},
\begin{align}
\mathcal{V} \psi^{(4)}_{i_1 1}(s,\nu,\ell)&=(-1)^{\ell} \psi^{(4)}_{1}(s,\nu,\ell); \label{connect1} \\
\mathcal{V} \psi^{(4)}_{i_2 1}(s,\nu,\ell)&=(-1)^{\ell+1} \psi^{(4)}_{1+\lambda^3}(s,\nu,\ell), \label{connect2}
\end{align}
where the functions $\psi_{\beta}^{(4)}(s,\nu,\ell)$ on the right side of the above displays are given by 
\eqref{dir}.
This yields the meromorphic continuation of the $\psi^{(4)}_{\beta}(s,\nu,\ell)$
to $\mathbb{C}$. 

\begin{remark} \label{lincombo}
For each $i=1,\ldots,24,$ and $0 \neq \nu \in \lambda^{-2} \mathbb{Z}[i]$,
the function $\mathcal{V} \psi^{(4)}_{i 1}(s,\nu,\ell)$
is equal to
a finite $\mathbb{C}$-linear combination
of $\psi^{(4)}_{1}(s,(-1)^{a} \lambda^{2b} \nu,\ell)$ and
$\psi^{(4)}_{1+\lambda^3}(s,(-1)^{a} \lambda^{2b} \nu,\ell)$ with $a,b \in \mathbb{Z}_{\geq 0}$.
There are two cases  according to
whether the essential cusp
\begin{equation} \label{cuspcases}
\kappa_i=\alpha_i/\gamma_i \quad \text{has} \quad (\gamma_i,\lambda)=1 \quad \text{or} \quad
(\gamma_i,\lambda) \neq 1. 
\end{equation}
If $(\gamma_i,\lambda)=1$, 
then by \eqref{cusps} we have $\gamma_i \in \{ \pm 1, \pm (1+\lambda^3), \pm (1-\lambda^3)\}$.
Let $\gamma_i^{\prime} \in \mathbb{Z}[i]$
be such that $\gamma_i \gamma_i^{\prime} \equiv 1 \ppmod{4}$.
Then 
by \cite[(2.35)--(2.36)]{Dia} and the argument following those equations,
we have 
\begin{equation} \label{coprimecusp}
\mathcal{V} \psi^{(4)}_{i 1}(s,\nu,\ell)=\overline{\Big( \frac{\alpha_i}{\gamma_i} \Big)}_4 \check{e}
\Big({-\frac{\gamma_i^{\prime} \alpha_i \nu}{\lambda^4}} \Big) \cdot
\begin{cases}
\psi^{(4)}_{1}(s,\nu,\ell) & \text{if } \gamma_i=-1 \\
\psi^{(4)}_{1+\lambda^3}(s,\nu,\ell) & \text{if } \gamma_i=-1-\lambda^3,-1+\lambda^3 \\
(-1)^{\ell} \psi^{(4)}_{1}(s,\nu,\ell)  &  \text{if } \gamma_i =1  \\
(-1)^{\ell+1} \psi^{(4)}_{1+\lambda^3}(s,\nu,\ell) & \text{if } \gamma_i=1+\lambda^3,1-\lambda^3.
\end{cases}
\end{equation}
If $(\gamma_i,\lambda) \neq 1$, then by \eqref{cusps} we have
$\gamma_i \in \{0,\lambda^4,\lambda^2,i \lambda^2, \lambda^3 \}$
and $\alpha_i \in \{1,1+\lambda^3\}$.  By
\cite[(2.38)--(2.40)]{Dia}) and the argument following those equations,
we have
\begin{align} \label{psilinearcombo}
\mathcal{V} \psi^{(4)}_{i 1}(s,\nu,\ell) &= \overline{\Big(\frac{-\gamma_i}{\alpha_i} \Big) }_4
\sum_{(a,b) \in S(\kappa_i,\nu)} (-i)^{\ell a}  \Big( \frac{{\overline{\lambda}}^{b}}{|\lambda^{b}|} \Big)^{\ell} 
2^{-bs}  \nonumber \\
& \times \Big(\psi^{(4)}_1(s,(-1)^{a} \lambda^{2b} \nu,\ell) \cdot \begin{cases}
\Gamma_1(\nu,i^{a} \lambda^b) & \text{if } \alpha_i=1 \ \\
\Gamma_{1+\lambda^3}(\nu,i^{a} \lambda^b) & \text{if } \alpha_i=1+\lambda^3
\end{cases} \nonumber \\
&+\psi^{(4)}_{1+\lambda^3}(s,(-1)^{a} \lambda^{2b} \nu,\ell) \cdot 
\begin{cases}
\Gamma_{1+\lambda^3}(\nu,i^{a} \lambda^b) & \text{if } \alpha_i=1 \\
-\Gamma_1(\nu,i^{a} \lambda^b)  & \text{if } \alpha_i=1+\lambda^3
\end{cases} \Big),
\end{align}
where 
\begin{equation} \label{gammagauss}
\Gamma_{\beta}(\nu, i^{a} \lambda^b):=\sum_{\substack{ h \ppmod{\lambda^{b+4}} \\ h \equiv \beta \ppmod{4} }}
\Big( \frac{i^a \lambda^b }{h} \Big)_4 \check{e} \Big( \frac{h \nu}{i^a \lambda^{b+4}} \Big),
\end{equation}
and 
\begin{equation} \label{suppdef}
S(\kappa,\nu):=\{(a,b): \Gamma_1 (\nu,i^a \lambda^b) \neq 0 \} \cup \{(a,b): \Gamma_{1+\lambda^3}(\nu,i^a \lambda^b) \neq 0 \}
\end{equation}
is a subset of $\{0,1,2,3\} \times \mathbb{Z}_{\geq 2}$, that happens to be finite. We now give more details.

If $0 \neq \nu=i^j \lambda^k \nu^{\prime} \in \mathbb{Z}[i]$ with $0 \leq j \leq 3$, $k \in \mathbb{Z}_{\geq -2}$, and
$\nu^{\prime} \equiv 1 \ppmod{\lambda^3}$, then
\begin{align} \label{gammatransform}
\Gamma_{\beta}(i^j \lambda^k \nu^{\prime}, i^{a} \lambda^b)=
\overline{\Big( \frac{i^a \lambda^b}{\nu^{\prime}} \Big)}_4
\cdot \begin{cases}
\Gamma_{\beta}(i^j \lambda^k, i^{a} \lambda^b) & \text{if } \nu^{\prime} \equiv 1 \ppmod{4} \\
\Gamma_{\beta(1+\lambda^3)}(i^j \lambda^k, i^{a} \lambda^b) & \text{if } \nu^{\prime} \equiv 1+\lambda^3 \ppmod{4}
\end{cases}.
\end{align}
The Gauss sums on the right side of \eqref{gammatransform} are explicitly computed in \cite[pg.~81-83]{Suz1},
but their exact values are not important for this paper. We outline the important properties now.
The equalities \eqref{suppdef} and \eqref{gammatransform} implies that the set indexing the sum in \eqref{psilinearcombo} satisfies 
\begin{equation} \label{indexprop}
S(\kappa,i^j \lambda^k \nu^{\prime})=S(\kappa,i^j \lambda^k).
\end{equation}
Moreover, by \cite[pg.~82]{Suz1} we have that 
for given $0 \leq j \leq 3$ and $k \in \mathbb{Z}_{\geq -2}$,
\begin{align} \label{gammasupport}
\Gamma_{\beta}(i^j \lambda^k, i^{a} \lambda^b) \neq 0 \implies 2 \leq b \leq k+5 \quad \text{and} \quad 0 \leq a \leq 3.
\end{align}
Thus \eqref{indexprop} and \eqref{gammasupport} imply that
for all $0 \neq \nu \in \lambda^{-2} \mathbb{Z}[i]$,
\begin{equation} \label{gammauniformbd}
|\Gamma_{\beta}(\nu, i^{a} \lambda^b)| \ll_{\text{ord}_{\lambda}(\nu)} 1,
\end{equation}
uniformly for all $(a,b) \in \{0,1,2,3\} \times \mathbb{Z}_{\geq 2}$. 
To keep the notation consistent between the two families of cusps in \eqref{cuspcases},
we set 
\begin{equation} \label{Scoprimecusp}
S(\kappa_i,\nu):=\{(0,0)\} \text{ for all } 0 \neq \nu \in \lambda^{-2} \mathbb{Z}[i]
 \text{ and } \kappa_i=\alpha_i/\gamma_i \text{ such that }
(\gamma_i,\lambda)=1,
\end{equation}
see \eqref{coprimecusp}.
\end{remark}

By a similar argument, it can be shown that  
up to a rational function 
of $2^{-s}$ depending on $\ell$, $i$, $j$,
the functions $\phi^{(4)}_{ji}(s,0,\ell)_{\vec{1} \hspace{0.025cm} \vec{1}}$ are equal to 
$\zeta_{\mathbb{Q}(i),\lambda}(4s-4,\ell) \zeta_{\mathbb{Q}(i),\lambda}(4s-3,\ell)^{-1}$.
Recall that $\kappa_1=\infty=1/0$, and
that $1 \leq i_1,i_2 \leq 24$ are such that 
$\kappa_{i_1}=0$ and $\kappa_{i_2}=\tfrac{\lambda^4}{1+\lambda^3}$. 
For instance, from \cite{Suz1} and \cite[(2.43)--(2.44)]{Dia} we have that 
\begin{align*}
\mathcal{V} \phi^{(4)}_{ji_1}(s,0,0)_{\vec{1} \hspace{0.025cm} \vec{1}}=
\begin{cases}
2(2^{4s-4}-1)^{-1} \zeta_{\mathbb{Q}(i),\lambda}(4s-4,0) \zeta_{\mathbb{Q}(i),\lambda}(4s-3,0)^{-1} & \text{if } j=i_1 \\
\zeta_{\mathbb{Q}(i),\lambda}(4s-4,0) \zeta_{\mathbb{Q}(i),\lambda}(4s-3,0)^{-1} & \text{if } \alpha_j=1 \\
0 & \text{otherwise}
\end{cases},
\end{align*}
and
\begin{align*}
\mathcal{V} \phi^{(4)}_{ji_2}(s,0,0)_{\vec{1} \hspace{0.025cm} \vec{1}}=
\begin{cases}
2(2^{4s-4}-1)^{-1} \zeta_{\mathbb{Q}(i),\lambda}(4s-4,0) \zeta_{\mathbb{Q}(i),\lambda}(4s-3,0)^{-1} & \text{if } j=i_2 \\
\overline{\big( \frac{-\gamma_j}{\alpha_j} \big)_4} \zeta_{\mathbb{Q}(i),\lambda}(4s-4,0) \zeta_{\mathbb{Q}(i),\lambda}(4s-3,0)^{-1} & \text{if } \alpha_j=1+\lambda^3 \\
0 & \text{otherwise}
\end{cases}.
\end{align*}

Using \cite[(2.25)--(2.33)]{Dia}
and the above description of the $\phi^{(4)}_{ji}(s,0,\ell)_{\vec{1} \hspace{0.025cm} \vec{1}}$ one can elucidate the 
functional equation \eqref{Efunceq}. To this end, for $0 \neq \nu \in \lambda^{-2} \mathbb{Z}[i]$ let 
\begin{align} 
Z_{i1}(s,\nu,\ell)&:=\zeta_{\mathbb{Q}(i),\lambda}(4s-3,\ell) \psi^{(4)}_{i1}(s,\nu,\ell);  \label{Zi1defn}  \\
\widehat{\psi}^{(4)}_{i1}(s,\nu,\ell)&:=G_{\infty}(s,\ell) Z_{i1}(s,\nu,\ell),  \label{hatdef}
\end{align}
where 
\begin{equation} \label{Ginfty}
G_{\infty}(s,\ell):=\Gamma_{\mathbb{C}} \Big(s+\frac{|\ell|}{2}-\frac{3}{4} \Big) \Gamma_{\mathbb{C}} \Big(s+\frac{|\ell|}{2}-\frac{1}{2} \Big) \Gamma_{\mathbb{C}} \Big(s+\frac{|\ell|}{2}-\frac{1}{4} \Big),
\end{equation}
and 
\begin{equation} \label{gammaC}
\Gamma_{\mathbb{C}}(s):=2 \cdot (2 \pi)^{-s} \Gamma(s).
\end{equation}
The functional equation in \cite[(2.48)]{Dia} reads
\begin{equation} \label{funceqn}
\widehat{\psi}^{(4)}_{i1}(s,\nu,\ell)=N(\nu)^{1-s} \Big( \frac{\overline{\nu}}{|\nu|} \Big)^{\ell} 
\cdot \sum_{j=1}^{24} A_{ji}(2^{-s},\ell) \widehat{\psi}^{(4)}_{j1}(2-s,\nu,-\ell),
\end{equation}
where $A_{ji}(2^{-z},\ell)$ is a rational function in $2^{-z}$ with coefficients in $\mathbb{C}$ and $0 \neq \nu \in \lambda^{-2} \mathbb{Z}[i]$.
The rational functions 
$A_{ji}(2^{-z},\ell)$ are holomorphic for $\Re(z) \neq 1, 5/4$. The analogous rational functions in the cubic case
can be deduced from \cite[Theorem~6.1]{Pat1}.

\begin{prop} \label{convexity}
The functions $\widehat{\psi}^{(4)}_{i1}(s,\nu,\ell)$ for $\ell \in \mathbb{Z}$,  $0 \neq \nu \in \lambda^{-2} \mathbb{Z}[i]$,
and $i=1,\ldots,24,$ can be meromorphically continued to $\mathbb{C}$; if $\ell \neq 0$ they are entire,
if $\ell=0$ they have at most two possible (simple) poles
at $s=3/4$ and $s=5/4$. In all cases these functions are bounded when $\Im(s)$ is large in vertical strips of finite width,
and satisfy the functional equation \eqref{funceqn}.

For $0<\varepsilon<1/100$ we have,
\begin{align} \label{convexbd}
\psi^{(4)}_{i1}(\nu,s,\ell) & \ll_{\varepsilon, \emph{ord}_{\lambda}(\nu)} N(\nu)^{(1/2)(3/2-\sigma)+\varepsilon} (|s|^2+\ell^2+1)^{(3/2)(3/2-\sigma)+\varepsilon}; \\
& \quad \text{for} \quad 1+\varepsilon<\sigma<3/2+\varepsilon \quad \text{and} \quad  |s- 5/4 |>1/8. \nonumber 
\end{align}
\end{prop}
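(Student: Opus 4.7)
The meromorphic continuation of $\widehat{\psi}^{(4)}_{i1}(s,\nu,\ell)$, its boundedness in vertical strips of finite width for $|\Im(s)|$ large, and the functional equation \eqref{funceqn} all follow from the Kubota--Selberg theory recalled earlier in this section: the vector functional equation \eqref{Efunceq} for the Eisenstein series $\boldsymbol{E}^{(4)}(w,s,\ell)$, together with \eqref{connect1}--\eqref{connect2} and more generally Remark \ref{lincombo}, transfers the analytic continuation and functional equation from $E_i^{(4)}$ to its Fourier--Whittaker coefficients $\psi^{(4)}_{i1}(s,\nu,\ell)$, and hence to $\widehat{\psi}^{(4)}_{i1}$. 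The location of the poles is read off from the explicit constant-term formulas displayed just before \eqref{Zi1defn}: for $\ell=0$ the ratio $\zeta_{\mathbb{Q}(i),\lambda}(4s-4,0)/\zeta_{\mathbb{Q}(i),\lambda}(4s-3,0)$ contributes a possible simple pole at $s=5/4$, while $G_{\infty}(s,0)$ has a simple pole at $s=3/4$ that is not cancelled; for $\ell \neq 0$, the shift by $|\ell|/2$ in \eqref{Ginfty} pushes the gamma poles outside the strip $\sigma > 1/2$, the factor $\zeta_{\mathbb{Q}(i),\lambda}(4s-3,\ell)$ is entire, and so is $\widehat{\psi}^{(4)}_{i1}$.

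The strategy for \eqref{convexbd} is to apply the Phragm\'en--Lindel\"of convexity principle to the auxiliary function $Z_{i1}(s,\nu,\ell)$ of \eqref{Zi1defn} on the strip $1/2 - \varepsilon \leq \Re(s) \leq 3/2 + \varepsilon$, after multiplying by an elementary polynomial factor (such as $s - 5/4$) to neutralise the possible simple pole at $s = 5/4$. On the right edge $\sigma = 3/2 + \varepsilon$, absolute convergence of \eqref{dir} (via $|g_4(\nu,c)| \leq N(c)^{1/2}$) gives $\psi^{(4)}_{i1}(s,\nu,\ell) \ll 1$, and $\zeta_{\mathbb{Q}(i),\lambda}(4s-3,\ell) \ll 1$ as well, so $Z_{i1}(s,\nu,\ell) \ll 1$. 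On the left edge $\sigma = 1/2 - \varepsilon$, the functional equation \eqref{funceqn} (with $A_{ji}(2^{-s},\ell)$ bounded on vertical lines away from their poles at $\Re(s) = 1, 5/4$) expresses $\widehat{\psi}^{(4)}_{i1}(s,\nu,\ell)$ as $N(\nu)^{1-s}$ times a bounded linear combination of $\widehat{\psi}^{(4)}_{j1}(2-s,\nu,-\ell)$, and the latter has size $\ll |G_{\infty}(2-s,-\ell)|$ by absolute convergence at $\Re(2-s) = 3/2 + \varepsilon$. Dividing by $G_{\infty}(s,\ell)$ to return to $Z_{i1}$ and applying a uniform Stirling analysis to the three-fold ratio $G_{\infty}(2-s,-\ell)/G_{\infty}(s,\ell)$ -- each $\Gamma_{\mathbb{C}}$-factor contributing a polynomial of size $(|s|^2 + \ell^2 + 1)^{(2-2\sigma)/2}$ and the exponential $e^{-\pi |t|/2}$ factors cancelling by the conjugation symmetry $s \mapsto 2-\overline{s}$ across the reflection -- yields $Z_{i1}(s,\nu,\ell) \ll N(\nu)^{1/2+\varepsilon}(|s|^2 + \ell^2 + 1)^{3/2+\varepsilon}$.

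Phragm\'en--Lindel\"of then interpolates linearly in $\sigma$ between the two edges to produce
\[
Z_{i1}(s,\nu,\ell) \ll N(\nu)^{(1/2)(3/2-\sigma)+\varepsilon}(|s|^2 + \ell^2 + 1)^{(3/2)(3/2-\sigma)+\varepsilon}
\]
throughout the strip (excluding the small disk $|s - 5/4| \leq 1/8$). To convert this to the claimed bound on $\psi^{(4)}_{i1}(s,\nu,\ell)$ one divides by $\zeta_{\mathbb{Q}(i),\lambda}(4s-3,\ell)$: in the range $1 + \varepsilon < \sigma < 3/2 + \varepsilon$ specified in \eqref{convexbd} one has $\Re(4s-3) > 1 + 4\varepsilon$, inside the region of absolute convergence of this Hecke $L$-function, so $|\zeta_{\mathbb{Q}(i),\lambda}(4s-3,\ell)| \gg 1$ by the Euler product. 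The main technical issue will be executing the Stirling analysis uniformly in both $|s|$ and $|\ell|$ -- in particular, verifying that the exponentials really do cancel in the regime where $|\ell|$ dominates $|\Im s|$ -- together with the bookkeeping for the polynomial used to absorb the pole at $s = 5/4$; the convexity step itself is then standard, given that the Eisenstein series origin of $\widehat{\psi}^{(4)}_{i1}$ guarantees finite order on vertical strips.
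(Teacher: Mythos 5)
Your proposal follows the same route as the paper's proof: meromorphic continuation, finite order, and the functional equation are quoted from the Kubota--Selberg/Diaconu theory; the convexity bound is obtained by applying Phragm\'en--Lindel\"of to the completed function $Z_{i1}(s,\nu,\ell)$ of \eqref{Zi1defn} between a right edge near $\sigma = 3/2$ (absolute convergence) and a left edge near $\sigma = 1/2$ (functional equation \eqref{funceqn} plus Stirling applied to the ratio $G_{\infty}(2-s,-\ell)/G_{\infty}(s,\ell)$); and one finally divides by $\zeta_{\mathbb{Q}(i),\lambda}(4s-3,\ell)$, which is $\gg 1$ for $\sigma > 1+\varepsilon$. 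The paper handles the pole at $s = 5/4$ by simply restricting to $|s-5/4| > 1/8$ rather than tensoring with a polynomial, but the two devices give the same result. Your pole analysis for $\widehat{\psi}^{(4)}_{i1}$ (the $\zeta_{\mathbb{Q}(i),\lambda}(4s-4,0)$ factor in the constant term giving $s=5/4$, the $\Gamma_{\mathbb{C}}(s+|\ell|/2-3/4)$ factor giving $s=3/4$ when $\ell=0$, and the shift by $|\ell|/2$ pushing these out when $\ell\neq 0$) matches Remark \ref{Zremark} and the explicit constant-term formulas.

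One point to fix: the right-edge justification ``via $|g_4(\nu,c)| \leq N(c)^{1/2}$'' is not correct as stated. That inequality fails whenever $c$ shares prime factors with $\nu$; for instance, by \eqref{rel4}, $|g_4(\pi^k, \pi^{k+1})| = N(\pi)^{k+1/2}$, which exceeds $N(\pi^{k+1})^{1/2} = N(\pi)^{(k+1)/2}$ once $k \geq 1$. The conclusion $\psi^{(4)}_\beta(s,\nu,\ell) \ll 1$ for $\sigma > 3/2 + \varepsilon$, uniformly in $\nu$, is still true, but one must instead use the twisted multiplicativity and the explicit prime-power evaluation \eqref{rel1}--\eqref{rel4} to factor the Dirichlet series into local pieces, and then verify that each local factor is bounded and that the product converges --- this is exactly what \eqref{absconvbd} in the paper does. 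With that correction your argument is complete.
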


\begin{proof}
The claims regarding the meromorphic continuation to all of $\mathbb{C}$, the location of possible poles, and the boundedness 
of $\widehat{\psi}^{(4)}_{i1}(\nu,s,\ell)$ in fixed vertical strips all follow from \cite[Theorem~2.1]{Dia}. 

The proof of the convexity bound \eqref{convexbd} is analogous to the argument on \cite[pg.~127]{HBP} 
with some straightforward modifications. For $\beta \in \{1,1+\lambda^3 \} \ppmod{4}$ recall the definition \eqref{dir}.
By \eqref{rel1}--\eqref{rel2}, \eqref{rel4},
and \eqref{sqrootcancel}, we have for $\sigma>3/2+\varepsilon$,
\begin{align} \label{absconvbd}
|\psi^{(4)}_{\beta}(s,\nu,\ell)| & \leq \prod_{\substack{ \pi^k \mid \! \mid \nu \\ \pi \equiv 1 \ppmod{\lambda^3} \\ k \in \mathbb{Z}_{\geq 0} \\
k \equiv 0,1,2 \ppmod{4}}}(1+N(\pi)^{1/2+k-(k+1) \sigma}) 
\prod_{\substack{\pi^{k} \mid \! \mid \nu \\ \pi \equiv 1 \ppmod{\lambda^3} \\ k \equiv 3 \ppmod{4}}} (1+N(\pi)^{k-(k+1) \sigma} ) \nonumber \\
 & \times \prod_{\substack{\pi^k \mid \! \mid \nu \\ \pi \equiv 1 \ppmod{\lambda^3} }} \Big(\sum_{\substack{0 \leq m \leq k \\ m \equiv 0 \ppmod{4}}} 
 N(\pi)^{m(1-s)} \Big) \nonumber \\
 & \ll 1.
\end{align}
Thus from Remark \ref{lincombo} (cf. \eqref{coprimecusp}--\eqref{gammauniformbd}), and \eqref{absconvbd}, we obtain
\begin{equation} \label{absconvbd2}
Z_{i1}(s,\nu,\ell) \ll_{\text{ord}_{\lambda}(\nu)} 1 \quad \text{for} \quad \sigma>\frac{3}{2}+\varepsilon,
\end{equation}
where $Z_{i1}(s,\nu,\ell)$ is given in \eqref{Zi1defn}.
From \eqref{absconvbd2} and \eqref{funceqn} one deduces that
\begin{equation} \label{12line1}
Z_{i1}(s,\nu,\ell) \ll_{\text{ord}_{\lambda}(\nu)} N(\nu)^{1/2} \cdot \frac{|G_{\infty}(2-s,-\ell)|}{|G_{\infty}(s,\ell)|} \quad \text{for} \quad 
\sigma=1/2+\varepsilon,
\end{equation}
for $i=1,\ldots,24$.
By Stirling's formula \cite[(5.11.1)]{NIST:DLMF} we have 
\begin{equation} \label{12line2}
\frac{|G_{\infty}(2-s,-\ell)|}{|G_{\infty}(s,\ell)|} \ll (|s|^2+\ell^2+1)^{3/2} \quad \text{for} \quad 
\sigma=1/2+\varepsilon.
\end{equation}
Applying the Phragmen--Lindel\"{o}f principle to $Z_{i1}(s,\nu,\ell)$
(it has order $1$), and using the estimates 
\eqref{absconvbd2} and \eqref{12line1}--\eqref{12line2}, we obtain
\begin{align} \label{convexprod}
Z_{i1}(s,\nu,\ell) &\ll_{\text{ord}_{\lambda}(\nu)} 
N(\nu)^{(1/2)(3/2-\sigma)+\varepsilon} (|s|^2+\ell^2+1)^{(3/2)(3/2-\sigma)+\varepsilon}, \nonumber \\
& \quad \text{for} \quad 1/2+\varepsilon<\sigma<3/2+\varepsilon, \quad  |s- 5/4 |, \hspace{0.05cm}
|s-3/4|>1/8,
\end{align}
and $i=1,\ldots,24$.
After noting that $|\zeta_{\mathbb{Q}(i),\lambda}(4s-3,\ell)| \gg 1$ for $\sigma>1+\varepsilon$,
we divide the bound in \eqref{convexprod} by $|\zeta_{\mathbb{Q}(i),\lambda}(4s-3,\ell)|$ 
and obtain \eqref{convexbd}.
\end{proof}

\begin{remark} \label{Zremark} 
For $i=1,\ldots,24$, Proposition \ref{convexity} tells us that $\widehat{\psi}_{i1}^{(4)}(s,\nu,0)$ from \eqref{hatdef}
has at most simple poles at $s=5/4$ and $s=3/4$.
Since $G_{\infty}(s,0)$ in \eqref{Ginfty} has a simple pole at $s=3/4$, then by
\eqref{hatdef} we have that $Z_{i1}(s,\nu,0)$ is analytic at $s=3/4$, and then ${\psi}_{i1}^{(4)}(s,\nu,0)$ is also analytic at $s=3/4$
(since $\zeta_{\mathbb{Q}(i),\lambda}(0,0) \neq 0$).
\end{remark}

\subsection{Theta functions} \label{thetasec}
Kubota \cite{Kub1,Kub2} introduced the notion of theta functions as residues of
metaplectic Eisenstein series on the $n$-fold cover of $\operatorname{GL}_2$ for $n \geq 3$. 
Kazhdan and Patterson \cite{KP} further studied metaplectic theta functions coming from the $n$-fold cover of 
$\operatorname{GL}_r$. The reader may consult the beautiful surveys 
\cite{Hoff,CFH} for a summary of this topic in the classical language.

On the $4$-fold cover of $\operatorname{GL}_2$ we have the quartic theta functions,
\begin{equation*}
\vartheta^{(4)}_i(w):=\text{Res}_{s=5/4} E_{i}^{(4)}(w,s,0), \quad w \in \mathbb{H}^3, \quad i=1,\ldots,24,
\end{equation*}
and from \eqref{KBesseleval}--\eqref{eisfourier}
we see that they have Fourier expansions,
\begin{equation} \label{thetafourier}
\vartheta^{(4)}_i(w)=\tau^{(4)}_i(\infty)v(w)^{3/4}+c \sum_{\substack{\nu \in \lambda^{-2} \mathbb{Z}[i] \\ \nu \neq 0 }}
\tau^{(4)}_i(\nu) N(\nu)^{1/8} v(w) K_{1/4} (\pi |\nu| v(w)) \check{e} \Big( \frac{\nu z(w)}{\lambda^4} \Big),
\end{equation}
where $c=2^{3/4} \pi^{5/4} \Gamma ( \frac{5}{4})^{-1}$.
The Fourier-Whittaker coefficients of the $\vartheta^{(4)}_i(w)$  are intimately connected to the
residues $\psi^{(4)}_1(\nu)$ and $\psi^{(4)}_{1+\lambda^3}(\nu)$ from \eqref{res1}. 
If $1 \leq i_1,i_2 \leq 24$ are such that 
$\kappa_{i_1}=0$ and $\kappa_{i_2}=\tfrac{\lambda^4}{1+\lambda^3}$, 
then from \eqref{KBesseleval}--\eqref{eisfourier},
\eqref{psidoubleindex}--\eqref{connect2}, and \eqref{thetafourier}, we have
\begin{equation} \label{taurel}
\mathcal{V} \tau^{(4)}_{i_1}(\nu)= \psi^{(4)}_{1}(\nu) \quad \text{and} \quad  \mathcal{V} \tau^{(4)}_{i_2}(\nu)=
-\psi^{(4)}_{1+\lambda^3}(\nu) \quad 
\text{for} \quad 0 \neq \nu \in \lambda^{-2} \mathbb{Z}[i].
\end{equation}

Discovery of a closed form formula for the Fourier-Whittaker coefficients of theta functions 
on the $n$-fold cover of $\operatorname{GL}_2$ 
is a long standing open problem. Patterson \cite{Pat1}
settled this problem in the case of the cubic theta function (3-fold cover of $\operatorname{GL}_2$) 
using a metaplectic Hecke converse argument. Patterson proved that the Fourier-Whittaker coefficients for the 
 cubic theta function are essentially cubic Gauss sums.
Unfortunately Patterson's approach did not extend to the 
$n$-fold cover of $\operatorname{GL}_2$ for $n \geq 4$. 

Suzuki \cite{Suz1} obtained partial information
about the residues on the $4$-fold cover of $\operatorname{GL}_2$ using a different method. 
In what follows, let $m \in \mathbb{Z}[i]$ be squarefree and satisfy $m \equiv 1 \pmod{\lambda^3}$.
Let $0 \neq \nu \in \lambda^{-2} \mathbb{Z}[i]$ and $\beta \in \{1,1+\lambda^3 \} \ppmod{4}$.
Suzuki \cite[Theorem~3]{Suz1} established \emph{periodicity} for biquadrates, 
\begin{equation} \label{fourrel}
\psi^{(4)}_{\beta}(m^4 \nu)=\psi^{(4)}_{\beta}(\nu),
\end{equation}
as well as the vanishing property involving cubes \cite[Theorem~4]{Suz1},
\begin{equation} \label{cuberel}
\psi^{(4)}_{\beta}(m^3 \nu)=0 \quad \text{when} \quad m \neq 1 \quad \text{and} \quad (m,\nu)=1.
\end{equation}
Suzuki's property concerning squares \cite[Theorem~5]{Suz1} is important for this paper i.e.
\begin{equation} \label{sqrel}
\psi^{(4)}_{\beta}(m^2 \nu)=
\begin{cases}
\frac{\overline{g_4(\nu,m)}}{N(m)^{3/4}} \cdot \psi^{(4)}_{\beta}(\nu) & \text{if} \quad m \equiv 1 \ppmod{4}, \\
(-1)^{\tfrac{N(\beta)-1}{4}} \cdot \frac{\overline{g_4(\nu,m)}}{N(m)^{3/4}} \cdot \psi^{(4)}_{\beta (1+\lambda^3)}(\nu) &
\text{if} \quad m \equiv 1+\lambda^3 \ppmod{4}
\end{cases},
\end{equation}
for $(m,\nu)=1$.

\begin{corollary} \label{coeffcor}
For $m \in \mathbb{Z}[i]$ squarefree with $m \equiv 1 \ppmod{\lambda^3}$ and $0 \neq \nu \in \lambda^{-2} \mathbb{Z}[i]$
with $(m,\nu)=1$, we have 
\begin{equation*}
\psi^{(4)}_{\beta}(m^2 \nu) \ll_{\nu} N(m)^{-1/4}.
\end{equation*}
\end{corollary}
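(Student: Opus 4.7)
The plan is to apply Suzuki's square relation \eqref{sqrel} directly, and then control the resulting quartic Gauss sum using the square-root cancellation bound from \eqref{sqrootcancel}. Since both ingredients are already stated in the paper, the proof should be very short.

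First, by \eqref{sqrel}, for $m$ squarefree with $m \equiv 1 \ppmod{\lambda^3}$ and $(m,\nu)=1$, I have
\begin{equation*}
\psi^{(4)}_{\beta}(m^2 \nu) = \varepsilon(\beta,m) \cdot \frac{\overline{g_4(\nu,m)}}{N(m)^{3/4}} \cdot \psi^{(4)}_{\beta'}(\nu),
\end{equation*}
where $\varepsilon(\beta,m) \in \{\pm 1\}$ and $\beta' \in \{1, 1+\lambda^3\} \ppmod 4$ are determined by the cases in \eqref{sqrel}. Thus it suffices to bound $|g_4(\nu,m)|$, since the factor $\psi^{(4)}_{\beta'}(\nu)$ depends only on $\nu$.

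Next, since $(\nu,m)=1$, the twisted multiplicativity relation \eqref{rel1} gives $g_4(\nu,m) = \overline{(\nu/m)_4}\, g_4(m)$. The quartic symbol is a fourth root of unity, so it has modulus $1$. Because $m$ is squarefree and $m \equiv 1 \ppmod{\lambda^3}$, the square-root cancellation bound \eqref{sqrootcancel} yields $|g_4(m)| = N(m)^{1/2}$. Combining these gives $|g_4(\nu,m)| = N(m)^{1/2}$.

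Assembling the pieces:
\begin{equation*}
|\psi^{(4)}_{\beta}(m^2 \nu)| = \frac{N(m)^{1/2}}{N(m)^{3/4}} \cdot |\psi^{(4)}_{\beta'}(\nu)| \ll_{\nu} N(m)^{-1/4},
\end{equation*}
which is the desired bound. There is no genuine obstacle here: the content is entirely carried by Suzuki's identity \eqref{sqrel}, which has already been invoked from the cited literature, and the only work left is to observe that the Gauss sum appearing there satisfies square-root cancellation on squarefree moduli.
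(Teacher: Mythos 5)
Your proof is correct and is essentially the argument the paper intends (the corollary is stated without a written proof, as an immediate consequence of Suzuki's identity \eqref{sqrel}). You correctly reduce the Gauss sum via \eqref{rel1} to $g_4(m)$ (note this is the shift-absorption identity, not the twisted multiplicativity relation \eqref{rel2}), and then invoke the square-root cancellation bound \eqref{sqrootcancel} on the squarefree modulus $m$ to conclude.
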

\begin{remark}
Suzuki's property \eqref{sqrel} with $\nu=1$ 
yields the closed form evaluation of Fourier-Whittaker coefficients of $\vartheta^{(4)}(w)$ indexed by $m^2$
with $m \in \mathbb{Z}[i]$ squarefree and satisfying $m \equiv 1 \ppmod{\lambda^3}$  
(cf. \eqref{thetafourier} and \eqref{taurel}). On the other hand, one obtains the
inferior bound $\psi^{(4)}_{\beta}(m^2) \ll N(m)^{1/4+\varepsilon}$ if only the convexity bound from
Proposition \ref{convexity} is used.
\end{remark}

Suzuki \cite[Theorem~6]{Suz1} also proved that
\begin{equation} \label{linrel1}
\psi^{(4)}_{\beta}(m \nu) =0 \quad \text{for} \quad (m,\nu)=1,
\end{equation}
unless
\begin{equation} \label{linrel2}
g_2 \Big(\frac{m \nu}{m_1},m_1 \Big)=N(m_1)^{1/2}
\quad \text{for all} \quad m_1 \text{ s.t. } m_1 \mid m \text{ with }
m_1 \equiv 1 \ppmod{\lambda^3}.
\end{equation} 
The success of Patterson in determining the Fourier-Whittaker coefficients of the theta function on 3-fold cover of $\operatorname{GL}_2$, 
and Suzuki the partial information \eqref{fourrel}--\eqref{linrel2} on the 4-fold cover of $\operatorname{GL}_2$, 
can be explained using periodicity and 
Hecke relations for the coefficients of theta functions.
See \cite{BH}, \cite[\S 2]{Hoff}, and \cite[\S 2]{CFH} for a summary. The properties \eqref{fourrel}--\eqref{linrel2}
say nothing about $\psi^{(4)}_{\beta}(\pi)$ for $\pi \equiv 1 \ppmod{4}$ prime for instance.
Deligne \cite{Deli} studied the problem from a representation
theoretic point of view and explained that 
the inaccessibility of the cases $n \geq 4$ on $\operatorname{GL}_2$ was owed to the fact that
local representations do not have unique Whittaker models.
Kazhdan and Patterson \cite{KP} generalised Deligne's work and showed the periodicity property held
in greater generality. 

Patterson \cite{Pat2}, with refinements by Eckhardt-Patterson \cite[Conjecture~2.11]{EckPat},
conjectured essentially that $\tau^{(4)}_{i}(\pi)^2 N(\pi)^{1/4}$ is proportional
to $\overline{\widetilde{g}_4(\pi)}$ for $\pi \equiv 1 \ppmod{4}$ prime.
Patterson's quartic conjecture was based on a conjectural identity of his involving 
two Dirichlet series that don't have an Euler product, see \cite[(3)]{BH} and \cite[pg.~154]{CFH}.
Patterson's quartic conjecture remains wide open,
and has been shown 
to hold on average by Bump and Hoffstein \cite[pg.~6-8]{BH}. It has been conjecturally
generalised to the $6$-fold cover of $\operatorname{GL}_2$
by Chinta, Friedberg and Hoffstein \cite[Conjecture~1]{CFH},
and numerically tested by Br\"{o}ker and Hoffstein \cite{BrHo}.

\section{Onodera's and Goldmakher--Louvel's quadratic large sieve over $\mathbb{Q}(i)$} \label{quadsievesec}
We record here Onodera's quadratic large sieve over $\mathbb{Q}(i)$ \cite{On}.
Onodera's result was a mild generalisation of Heath-Brown's famous
quadratic large sieve over $\mathbb{Q}$ \cite{HB1}.
Goldmakher and Louvel \cite{GL} subsequently generalised these results
to more general number fields.

\begin{theorem}{\emph{\cite[Theorem~1]{On}, \cite[Corollary~1.2]{GL} }} \label{quadsieve}
Let $\{a_n\}$ be an arbitrary complex sequence, 
$M,N \geq 1$, and $\varepsilon>0$. Then we have
\begin{align*}
\sum_{\substack{m \in \mathbb{Z}[i] \\ N(m) \leq M \\ m \equiv 1 \ppmod{2} }} \mu^2(m) \Big | \hspace{0.1cm}
\sum_{\substack{n \in \mathbb{Z}[i] \\  N(n) \leq N \\ n \equiv 1 \ppmod{2}}} a_n \mu^2(n) \Big( \frac{n}{m} \Big)_2 \Big |^2 \ll_{\epsilon} (MN)^\varepsilon (M+N)   
\sum_{N(n) \leq N} |a_{n}|^2 \mu^2(n).
\end{align*}
\end{theorem}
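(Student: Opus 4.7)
The plan is to follow Heath-Brown's approach to the quadratic large sieve over $\mathbb{Q}$ \cite{HB1}, adapted to $\mathbb{Z}[i]$ in the style of Onodera \cite{On} and Goldmakher--Louvel \cite{GL}. First I would smooth the sum over $m$ by majorizing the sharp cutoff $N(m) \leq M$ with a weight $w(N(m)/M)$ for some nonnegative Schwartz-type $w$. Expanding the square on the inner sum over $n$ and interchanging the order of summation, the left-hand side is controlled by
\begin{equation*}
\sum_{n_1, n_2} a_{n_1} \overline{a_{n_2}} \mu^2(n_1) \mu^2(n_2) \sum_{\substack{m \in \mathbb{Z}[i] \\ m \equiv 1 \ppmod{2}}} \mu^2(m) w\Big(\frac{N(m)}{M}\Big) \Big(\frac{n_1 n_2}{m}\Big)_2,
\end{equation*}
using the multiplicativity $\big(\tfrac{n_1}{m}\big)_2 \big(\tfrac{n_2}{m}\big)_2 = \big(\tfrac{n_1 n_2}{m}\big)_2$.

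Write $n_1 n_2 = t^2 u$ with $u \in \mathbb{Z}[i]$ squarefree (and its associate class fixed by unit choice). Since $\big(\tfrac{t^2}{m}\big)_2 = 1$ when $\gcd(t, m) = 1$, the symbol reduces to $\big(\tfrac{u}{m}\big)_2$. The diagonal $u = 1$ forces $n_1 = n_2$ (both squarefree with product a square), giving the trivially-bounded inner sum $\sum_m \mu^2(m) w(N(m)/M) \ll M$, which contributes the desired $M \sum_n |a_n|^2 \mu^2(n)$ piece.

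For the off-diagonal contribution ($u \neq 1$), I would invoke quadratic reciprocity in $\mathbb{Q}(i)$, obtained by squaring the biquadratic law \eqref{bilaw}: for $u, m$ primary and coprime, $\big(\tfrac{u}{m}\big)_2 = \big(\tfrac{m}{u}\big)_2$. Splitting into residue classes modulo $\lambda^3$ to pass from the condition $m \equiv 1 \ppmod{2}$ to primary representatives (absorbing the supplementary laws of \S\ref{quarticsec} into bounded twists), the inner sum becomes a smoothed character sum modulo $u$, namely $\sum_m \big(\tfrac{m}{u}\big)_2 \mu^2(m) w(N(m)/M)$. Applying Poisson summation in $\mathbb{Z}[i]$ expresses this as a sum of quadratic Gauss sums $g_2(\cdot, u)$ twisted by $\widehat{w}$; since $|g_2(k, u)| = N(u)^{1/2}$ for $\gcd(k, u) = 1$ and $u$ squarefree (cf. \eqref{sqrootcancel}), one obtains the bound $O(N(u)^{1/2+\varepsilon})$.

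Finally, I would assemble the off-diagonal contribution. Writing $d = \gcd(n_1, n_2)$, $n_1 = d n_1'$, $n_2 = d n_2'$ with $\gcd(n_1', n_2') = 1$, the squarefree kernel is $u = n_1' n_2'$ and $N(u) \leq N/N(d)^2 \cdot (\text{size factors})$. A divisor-function bound together with Cauchy--Schwarz collapses the $n_1, n_2$ sum to $O\big(N^{\varepsilon}(M+N) \sum_n |a_n|^2 \mu^2(n)\big)$, matching the theorem. The principal obstacles are (i) carefully managing the ramified prime $\lambda = 1+i$ and unit twists when moving between the conditions $\equiv 1 \ppmod{2}$ and $\equiv 1 \ppmod{\lambda^3}$ needed to invoke \eqref{bilaw}, and (ii) handling the imperfect multiplicativity caused by common factors of $n_1, n_2, m$, which is why the squarefree decomposition $n_1 n_2 = t^2 u$ and the enforcement of $\gcd(t, m) = 1$ via Möbius inversion are crucial at the outset.
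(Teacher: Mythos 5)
The paper does not prove Theorem~\ref{quadsieve} at all; it is quoted verbatim from Onodera \cite{On} and Goldmakher--Louvel \cite{GL}, so there is no ``paper's own proof'' to compare against. Evaluating your proposed proof on its own merits, there is a genuine and central gap.

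Your off-diagonal analysis only recovers the \emph{classical} large sieve bound, not the quadratic large sieve bound of the theorem. After writing $n_1 n_2 = t^2 u$ with $u$ squarefree, applying reciprocity, and Poisson summation on the smoothed $m$-sum, you obtain (as you correctly state) a bound of shape $O(N(u)^{1/2+\varepsilon})$ for the inner sum over $m$. However, when you assemble the off-diagonal contribution, you have
\begin{equation*}
\sum_{\substack{n_1, n_2 \\ u \neq 1}} |a_{n_1}|\,|a_{n_2}|\, N(u)^{1/2+\varepsilon}
\;\leq\; N^{\varepsilon} \Big( \sum_{N(n)\leq N} |a_n|\, N(n)^{1/2} \Big)^{2}
\;\ll\; N^{2+\varepsilon} \sum_{N(n)\leq N} |a_n|^2 \mu^2(n),
\end{equation*}
since $N(u) \leq N(n_1)N(n_2)$, and no amount of writing $d = \gcd(n_1,n_2)$ or invoking divisor bounds removes the extra factor of $N$; the range of $u$ genuinely extends up to $N^2$. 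Together with the diagonal this gives $(M + N^2)\sum |a_n|^2$ (equivalently, by duality, $(N + M^2)\sum|a_n|^2$), not $(M+N)\sum |a_n|^2$. The passage from $(M + N^2)$ to $(M + N)$ is precisely the novel content of Heath-Brown's theorem, and it is obtained not by a single application of Poisson plus Cauchy--Schwarz but by a recursive amplification: one defines the optimal constant $\Sigma(M,N)$ in the inequality, uses reciprocity together with a H\"older/duality step to bound $\Sigma(M,N)$ in terms of $\Sigma$ at other scales, and iterates the resulting self-improving inequality down to the fixed point $(M+N)$. This iterative mechanism (Heath-Brown's Lemmas~2--4 in \cite{HB1}, and their $\mathbb{Q}(i)$ analogues in \cite{On} and \cite{GL}) is entirely absent from your proposal, so the final step ``Cauchy--Schwarz collapses the $n_1,n_2$ sum to $O(N^{\varepsilon}(M+N)\sum|a_n|^2)$'' does not follow from what precedes it.

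The preparatory reductions you describe --- smoothing, expanding the square, the $n_1 n_2 = t^2 u$ decomposition, handling the ramified prime $\lambda$ and unit twists in passing between $\equiv 1 \pmod 2$ and primary representatives, and the diagonal computation (with the caveat that in $\mathbb{Z}[i]$ the diagonal $u$ a unit picks up $n_1 = \pm n_2$, a harmless factor of $2$) --- are all reasonable and do appear in the actual proofs. What is missing is the iterative kernel of the argument, without which you only reprove the weaker $(M+N^2)$ inequality.
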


\section{Preliminary reductions and Vaughan's identity} \label{vauidsec}
Let $R:(0,\infty) \rightarrow \mathbb{C}$ be a smooth function with compact support in $[1,2]$.
We suppress the dependence on $R$ from the notation.
For $\alpha \in \mathbb{Z}[i]$ with $\alpha \equiv 1 \ppmod{\lambda^3}$, $\ell \in \mathbb{Z}$,
and $\beta \in \{1,1+\lambda^3 \} \ppmod{4}$, let 
\begin{equation} \label{Helldefn}
H_{\beta}(X,\ell):=\sum_{\substack{c \in \mathbb{Z}[i] \\ c \equiv \beta \ppmod{4} }}
\widetilde{g}_4(c) \Big(\frac{\overline{c}}{|c|} \Big)^{\ell} \Lambda(c) R \Big( \frac{N(c)}{X} \Big),
\end{equation}
and
\begin{equation} \label{Felldefn}
F_{\beta}(X,\ell; \alpha):=\sum_{\substack{c \in \mathbb{Z}[i] \\ c \equiv \beta \ppmod{4} \\ c \equiv 0 \ppmod{\alpha} }}
\widetilde{g}_4(c) \Big( \frac{\overline{c}}{|c|} \Big)^{\ell} R \Big( \frac{N(c)}{X} \Big).
\end{equation}

We now record Vaughan's identity \cite{Vau}.
Our setting is analogous to \cite[\S2]{HB}. For $1 \leq u \leq X$, $j \in \{0,1,2^{\prime},2^{\prime \prime},3,4\}$,
$\ell \in \mathbb{Z}$, and $\beta \in \{1,1+\lambda^3 \} \ppmod{4}$, let
\begin{equation} \label{sigmajl}
\Sigma_{j,\beta} (X,\ell,u):=\sum_{\substack{a,b,c \in \mathbb{Z}[i] \\ a,b,c \equiv 1 \ppmod{\lambda^3} \\ abc \equiv \beta \ppmod{4}}}
\Lambda(a) \mu(b) \widetilde{g}_4(abc)
\Big( \frac{\overline{abc}}{|abc|} \Big)^{\ell} R \Big( \frac{N(abc)}{X} \Big),
\end{equation}
where $a,b,c \in \mathbb{Z}[i]$ are subject to the conditions: 
\begin{align*}
N(bc) & \leq u,  \quad \quad \quad \quad \quad \quad  j=0;  \\ 
N(b) & \leq u,  \quad \quad \quad \quad \quad \quad   j=1;  \\ 
N(ab) & \leq u,  \quad \quad \quad \quad \quad \quad j=2';  \\ 
N(a),N(b) \leq u &< N(ab),   \quad \quad \quad \quad j=2'';  \\ 
N(b) \leq u&< N(a), N(bc),   \quad \hspace{0.2cm} j=3;  \\ 
N(a), N(bc) & \leq u, \quad \quad \quad \quad \quad \quad j=4. 
\end{align*}
Then Vaughan's identity reads,
\begin{equation} \label{Vauid}
\Sigma_{0,\beta}(X,\ell,u) + \Sigma_{2',\beta}(X,\ell,u)  + \Sigma_{2^{\prime \prime},\beta}(X,\ell,u)+\Sigma_{3,\beta}(X,\ell,u)
=\Sigma_{1,\beta}(X,\ell,u)+\Sigma_{4,\beta}(X,\ell,u). 
\end{equation}
\begin{remark}
Note that the presence of the $\widetilde{g}_4(abc)$ in \eqref{sigmajl} imply that the summation variables $a,b,c \in \mathbb{Z}[i]$
in each $\Sigma_{j,\beta}(X,\ell,u)$ are supported on $\mu^2(abc)=1$.
\end{remark}

The following lemma is a straightforward modification of the statement and 
proof of \cite[Proposition~1]{HBP} and \cite[pg.~101-102]{HB},
where the condition $X < N(abc) \leq 2X$ is replaced by the smooth function $R(N(abc)/X)$ supported on $[1,2]$.
\begin{lemma} \label{type1init}
Let $R:(0,\infty) \rightarrow \mathbb{C}$ be a smooth function with compact support in $[1,2]$,
$\ell \in \mathbb{Z}$, $X \geq 1$, $1 \leq u<X^{1/2}$,
$\beta \in \{1,1+\lambda^3 \} \ppmod{4}$, $j \in \{0,1,2^{\prime},2^{\prime \prime},3,4\}$,
and $\Sigma_{j,\beta}(X,\ell,u)$ be as in \eqref{sigmajl}. Then
\begin{align}
\Sigma_{0,\beta}(X,\ell,u)&=H_{\beta}(X,\ell); \label{sigma0} \\
\Sigma_{4,\beta}(X,\ell,u)&=0; \label{sigma4} \\
 | \Sigma_{1,\beta}(X,\ell,u)  |  & \leq \sum_{\substack{ N(\alpha) \leq u \\ \alpha \equiv 1 \ppmod{\lambda^3} }} \mu^2(\alpha) \nonumber \\
 & \times  \Big(|F_{\beta}(2X,\ell;\alpha)| \log(2X)+|F_{\beta}(X,\ell;\alpha)| \log X 
+\int_{X}^{2X} | F_{\beta}(x,\ell;\alpha) | \frac{dx}{x} \Big);  \label{sigma1bd}   \\
 | \Sigma_{2^{\prime},\beta} (X,\ell,u) | 
& \leq  \sum_{\substack{N(\alpha) \leq u \\ \alpha \equiv 1 \ppmod{\lambda^3}}} \mu^2(\alpha)(|F_{\beta}(X,\ell;\alpha)|+|F_{\beta}(2X,\ell;\alpha)|).
 \label{sigma2primebd} 
\end{align}
\end{lemma}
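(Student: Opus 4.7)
The plan is to establish the four claims \eqref{sigma0}--\eqref{sigma2primebd} in order, treating the Vaughan identity \eqref{Vauid} as combinatorial input that I would verify first by case analysis. For each triple $(a,b,c)$ with $abc$ fixed, the indicator of the conditions on $(N(a), N(b), N(bc))$ versus $u$ determines its contribution to each $\Sigma_{j,\beta}$, and one checks that the aggregate indicators agree on both sides of \eqref{Vauid}. With \eqref{Vauid} in hand, \eqref{sigma4} is immediate by size: if $N(a), N(bc) \leq u < X^{1/2}$ then $N(abc) \leq u^2 < X$, contradicting that $R(N(abc)/X)$ is supported on $N(abc) \in [X, 2X]$. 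Likewise, \eqref{sigma0} follows from $\sum_{bc = m,\, b, c \equiv 1 \ppmod{\lambda^3}} \mu(b) = [m = 1]$: inside $\Sigma_{0,\beta}$ the inner double sum over $(b,c)$ forces $b = c = 1$ (and hence $a = n$), at which point $N(bc) = 1 \leq u$ is automatic, and the remaining sum is exactly $H_\beta(X, \ell)$.

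The heart of the lemma is \eqref{sigma1bd}. I would place the $b$-summation outermost and relabel $b = \alpha$, with squarefreeness of $\alpha$ arising from $|\mu(\alpha)| = \mu^2(\alpha)$. For each such $\alpha$ with $N(\alpha) \leq u$, the inner double sum in $a, c$ parametrizes $n = a \alpha c$ running over multiples of $\alpha$, and collapsing the $a$-sum via $\sum_{a \mid (n/\alpha)} \Lambda(a) = \log N(n/\alpha)$ yields
\[ T_\alpha := \sum_{\substack{n \in \mathbb{Z}[i] \\ \alpha \mid n,\ n \equiv \beta \ppmod{4}}} \widetilde{g}_4(n) \Big( \frac{\overline{n}}{|n|} \Big)^{\ell} R\bigl(N(n)/X\bigr) \bigl( \log N(n) - \log N(\alpha) \bigr). \]
The $\log N(\alpha)$ piece contributes at most $\log X \cdot |F_\beta(X, \ell; \alpha)|$. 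For the $\log N(n)$ piece, the identity $\log N(n) = \log(2X) - \int_{N(n)}^{2X} dx/x$, valid on the support of $R(N(n)/X)$, produces a $\log(2X)$ endpoint term that I would match to $\log(2X) \cdot |F_\beta(2X, \ell; \alpha)|$ via a Fubini swap exploiting the smoothness of $R$, plus an integral $\int_X^{2X} dx/x$ of a truncated character sum which I compare to the smoothly-weighted $|F_\beta(x, \ell; \alpha)|$. Summing over $\alpha$ against $\mu^2(\alpha)$ delivers \eqref{sigma1bd}.

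For \eqref{sigma2primebd}, I would group by $\alpha = ab$ with $N(\alpha) \leq u$; the support constraint $\mu^2(abc) = 1$ forces $\alpha$ to be squarefree and coprime to $c$. The interior $c$-sum immediately yields $F_\beta(X, \ell; \alpha)$, and the combinatorial weight $\sum_{ab = \alpha} \Lambda(a) \mu(b) = -\mu(\alpha) \log N(\alpha)$ on squarefree $\alpha$ is bounded; the resulting logarithm is absorbed by the same endpoint/integral mechanism as in the previous step, recovering \eqref{sigma2primebd}. The main technical obstacle throughout is the passage from sharply truncated partial sums $\sum_{N(n) \leq x} w(n)$ to the smoothly-weighted $F_\beta(x, \ell; \alpha)$; this is where the specific form of the smooth cutoff $R$ supported on $[1, 2]$ enters, and is the only nontrivial bookkeeping, the rest being a direct transcription of the Heath-Brown--Patterson argument from the sharp-cutoff to the smoothed setting.
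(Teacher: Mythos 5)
Your treatment of \eqref{sigma0} and \eqref{sigma4}, and your setup for \eqref{sigma1bd} (relabelling $b=\alpha$, collapsing the $a$-sum to produce $T_\alpha$ with the weight $\log N(n/\alpha)$), are correct. The paper itself gives no proof of this lemma; it cites it as a ``straightforward modification'' of the sharp-cutoff statements in Heath-Brown--Patterson and Heath-Brown, so I can only judge your argument on its own merits. Two steps do not go through.

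First, inserting $\log N(n) = \log(2X) - \int_{N(n)}^{2X} dx/x$ into $T_\alpha$ and summing over $n$ gives the endpoint term
$\log(2X) \sum_{\alpha\mid n,\, n\equiv \beta} W(n) R(N(n)/X) = \log(2X)\, F_\beta(X,\ell;\alpha)$,
\emph{not} $\log(2X)\, F_\beta(2X,\ell;\alpha)$. The factor $\log(2X)$ is a constant that comes straight out of the sum; no Fubini swap or use of the smoothness of $R$ can turn the argument $X$ of $F_\beta$ into $2X$, since $F_\beta(2X,\ell;\alpha)$ is supported on $N(n)\in[2X,4X]$ while $T_\alpha$ is supported on $N(n)\in[X,2X]$. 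Second, the Fubini swap on the remaining term yields
$\int_X^{2X} \frac{dx}{x}\sum_{\substack{N(n)\le x,\ \alpha\mid n}} W(n) R(N(n)/X)$,
a sharply truncated partial sum of the \emph{fixed}-window smooth sum. You say you ``compare'' this to $F_\beta(x,\ell;\alpha) = \sum_n W(n) R(N(n)/x)$, but these are genuinely different objects: for $x\in(X,2X)$ the first is supported on $N(n)\in[X,x]$ and the second on $N(n)\in[x,2x]$, so they sample (essentially) disjoint ranges. You acknowledge this as ``the only nontrivial bookkeeping'' but never do the bookkeeping; without it, \eqref{sigma1bd} is not derived.

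For \eqref{sigma2primebd}, the identity $\sum_{ab=\alpha}\Lambda(a)\mu(b) = -\mu(\alpha)\log N(\alpha)$ on squarefree $\alpha$ is correct, and collapsing the $c$-sum gives $\Sigma_{2',\beta} = -\sum_{\alpha}\mu(\alpha)\log N(\alpha)\, F_\beta(X,\ell;\alpha)$. But the claimed bound \eqref{sigma2primebd} contains \emph{no} logarithmic factor and \emph{no} integral, so there is no ``endpoint/integral mechanism'' available to absorb $\log N(\alpha)$; the appeal to ``the same mechanism as in the previous step'' is vacuous. You leave the stated inequality underived. In short, both \eqref{sigma1bd} and \eqref{sigma2primebd} are argued by gesturing at a reduction you do not perform, and the two specific gaps above would need to be closed (or the statement reformulated with the truncated partial sums $\widetilde F(x):=\sum_{N(n)\le x,\,\alpha\mid n,\,n\equiv\beta} W(n) R(N(n)/X)$ in place of the sliding-window $F_\beta(x,\ell;\alpha)$, as partial summation actually produces) before this could count as a proof.
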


\section{Type-II estimates} \label{type2sec}
In this section we prove estimates for the terms $\Sigma_{2^{\prime \prime},\beta}$
and $\Sigma_{3,\beta}$ appearing in \eqref{Vauid}. We use twisted multiplicativity for quartic Gauss sums and 
the quadratic large sieve over $\mathbb{Q}(i)$.
Our approach is analogous to the case of cubic Gauss sums \cite[pg.102-103]{HB}.

\begin{prop} \label{type2prop}
Let $R:(0,\infty) \rightarrow \mathbb{C}$ be a smooth function with compact support in $[1,2]$,
$\ell \in \mathbb{Z}$, $1 \leq u<X^{1/2}$,
and $\beta \in \{1,1+\lambda^3 \} \ppmod{4}$. For any $\varepsilon>0$ we have,
\begin{equation*}
\Sigma_{2^{\prime \prime},\beta}(X,\ell,u), \hspace{0.1cm} \Sigma_{3,\beta}(X,\ell,u) \ll_{\varepsilon,R} X^{\varepsilon}(X^{1/2} u+X u^{-1/2}),
\end{equation*}
uniformly in $\ell$.
\end{prop}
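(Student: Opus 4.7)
The strategy is to recast both $\Sigma_{2'',\beta}$ and $\Sigma_{3,\beta}$ as bilinear (Type-II) sums in two grouped blocks of variables, apply twisted multiplicativity \eqref{rel3} to convert the quartic Gauss sum $\widetilde{g}_4(abc)$ into a product of two Gauss sums times a quadratic residue symbol, and then invoke the quadratic large sieve over $\mathbb{Q}(i)$ (Theorem \ref{quadsieve}). The argument parallels the cubic treatment in \cite[pg.~102-103]{HB}.

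First, I would perform a smooth dyadic decomposition in $N(a), N(b), N(c)$, restricting to ranges $N(a) \asymp A$, $N(b) \asymp B$, $N(c) \asymp C$ with $ABC \asymp X$; the $O((\log X)^3)$ such pieces are absorbed into the final $X^{\varepsilon}$. Simultaneously, I would split the congruence $abc \equiv \beta \ppmod{4}$ into finitely many subcases according to the classes of $a$, $b$, $c$ modulo $4$ (each lying in $\{1,1+\lambda^3\}$), so that all reciprocity sign factors appearing below become constants on each subcase. I then group variables into two blocks: for $\Sigma_{2'',\beta}$ set $m=ab$, $n=c$, giving $M=AB \in (u,u^2]$ and $N = C \asymp X/M \in [X/u^2,X/u)$; for $\Sigma_{3,\beta}$ set $m=a$, $n=bc$, giving $M,N \in (u,X/u)$ with $MN \asymp X$. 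One application of \eqref{rel3} for $\Sigma_{2''}$ (two for $\Sigma_3$), combined with $g_4((c')^2,c)=(c'/c)_2 g_4(c)$ (from \eqref{rel1} and \eqref{quartquad}) and with the identity $(m/n)_2=(n/m)_2$ for primary coprime $m,n$ (obtained by squaring \eqref{bilaw}), yields
\begin{equation*}
\widetilde{g}_4(mn) = \varepsilon(m,n) \Big(\frac{m}{n}\Big)_2 \widetilde{g}_4(m)\widetilde{g}_4(n),
\end{equation*}
where $\varepsilon(m,n) = \pm 1$ depends only on the fixed residue classes modulo $4$.

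Next, I would apply Mellin inversion $R(N(mn)/X) = \tfrac{1}{2\pi i}\int_{(0)} \widetilde{R}(s) X^s N(m)^{-s} N(n)^{-s}\, ds$, with $\widetilde{R}$ of rapid decay on vertical lines, to fully decouple $m$ and $n$ in the weight. The resulting expression is a bilinear form
\begin{equation*}
B(s) = \sum_m \alpha_m(s) \sum_n \beta_n(s) \Big(\frac{m}{n}\Big)_2,
\end{equation*}
where $\alpha_m(s) = \gamma_m \widetilde{g}_4(m) (\overline{m}/|m|)^{\ell} N(m)^{-s}$ and $\beta_n(s) = \delta_n \widetilde{g}_4(n)(\overline{n}/|n|)^{\ell} N(n)^{-s}$; the arithmetic weights $\gamma_m,\delta_n$ are bounded convolutions involving $\Lambda,\mu$ (satisfying $|\gamma_m|,|\delta_n| \ll \tau(\cdot)^c$) and are supported on squarefree arguments. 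On $\mathrm{Re}(s)=0$ we have $|\widetilde{g}_4(m)| = \mu^2(m)$ by \eqref{sqrootcancel}, hence $\sum_{N(m) \asymp M} |\alpha_m(s)|^2 \ll M(\log M)^c$ and similarly in $n$. Cauchy--Schwarz followed by Theorem \ref{quadsieve} then gives
\begin{equation*}
|B(s)|^2 \leq \Big(\sum_m |\alpha_m(s)|^2\Big) \cdot \sum_{N(m)\asymp M} \mu^2(m) \Big|\sum_n \beta_n(s) \mu^2(n) \Big(\frac{m}{n}\Big)_2\Big|^2 \ll (MN)^{1+\varepsilon}(M+N),
\end{equation*}
so $|B(s)| \ll (M+N)^{1/2} X^{1/2+\varepsilon}$ uniformly in $s$, and integrating against the rapidly decaying $\widetilde{R}(s)$ preserves this bound.

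Finally, using $(M+N)^{1/2} \leq M^{1/2} + N^{1/2}$: for $\Sigma_{2''}$ one has $M^{1/2} \leq u$ and $N^{1/2} \leq (X/u)^{1/2}$, yielding $(M+N)^{1/2} X^{1/2} \ll uX^{1/2} + Xu^{-1/2}$; for $\Sigma_3$ one has $M^{1/2},N^{1/2} \leq (X/u)^{1/2}$, yielding the (better) bound $Xu^{-1/2}$. Summing over dyadic pieces and residue-class subcases absorbs the logarithmic losses into $X^{\varepsilon}$, giving the claimed estimate. The main obstacle is not a single hard inequality but the careful bookkeeping of the sign factors $(-1)^{C(\cdot,\cdot)}$ and quartic symbols produced by \eqref{rel3} and \eqref{bilaw}, together with the conductor-$4$ congruences; the primary condition $a,b,c\equiv 1\ppmod{\lambda^3}$ and the dichotomy $\{1,1+\lambda^3\}\ppmod 4$ ensure these signs collapse to constants on each subcase, so that the bilinear structure survives intact and the large sieve applies cleanly.
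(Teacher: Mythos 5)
Your proposal is correct and mirrors the paper's proof in essentially all respects: group $\{a,b,c\}$ into two blocks $v=ab,\,w=c$ (resp.\ $v=a,\,w=bc$), invoke \eqref{quartquad}, \eqref{bilaw}, \eqref{rel2} to extract $\big(\tfrac{w}{v}\big)_2$ times a sign depending only on the residue classes modulo $4$, dyadically decompose, Mellin-invert $R$, and finish with Cauchy--Schwarz plus the quadratic large sieve (Theorem \ref{quadsieve}); the minor differences (three-variable vs.\ two-variable dyadic decomposition, absorbing the sign $(-1)^{C(v,w)}$ into $\varepsilon(m,n)$) are purely cosmetic.
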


\begin{proof}
Using \eqref{quartquad}, \eqref{bilaw}, and \eqref{rel2}, we have
\begin{equation} \label{2primeintermed}
\Sigma_{2^{\prime \prime},\beta}(X,\ell,u)=\sum_{\substack{v,w \in \mathbb{Z}[i] \\ u<N(v) \leq u^2 \\  N(w) \leq 2X/u  \\ vw \equiv \beta \ppmod{4} }}
(-1)^{C(v,w)} A(v) B(w) \Big(\frac{w}{v} \Big)_2 R \Big( \frac{N(vw)}{X} \Big), 
\end{equation}
where $C(\cdot,\cdot)$ is given in \eqref{Cdef}, and
\begin{align}
A(v)&:= \delta_{v \equiv 1 \ppmod{\lambda^3}} \cdot \widetilde{g}_4(v) \Big(\frac{\overline{v}}{|v|} \Big)^{\ell} \sum_{\substack{ab=v \\ N(a), N(b) \leq u \\ a,b \equiv 1 \ppmod{\lambda^3} } } \Lambda(a) \mu(b);  \label{Adef} \\
B(w)&:= \delta_{w \equiv 1 \ppmod{\lambda^3}} \cdot \widetilde{g}_4(w) \Big(\frac{\overline{w}}{|w|} \Big)^{\ell}. \label{Bdef}
\end{align}
Similarly,
\begin{equation} \label{3primeintermed}
\Sigma_{3,\beta}(X,\ell,u)=\sum_{\substack{v,w \in \mathbb{Z}[i] \\ u<N(v),N(w) \leq 2X/u \\ vw \equiv \beta \ppmod{4} }}
(-1)^{C(v,w)} G(v) H(w) \Big(\frac{w}{v} \Big)_2 R \Big( \frac{N(vw)}{X} \Big), 
\end{equation}
where
\begin{align}
G(v)&:=\delta_{v \equiv 1 \ppmod{\lambda^3}} \cdot \Lambda(v) \widetilde{g}_4(v) \Big( \frac{\overline{v}}{|v|} \Big)^{\ell} \label{Gdef}; \\
H(w)&:=\delta_{w \equiv 1 \ppmod{\lambda^3}} \cdot \widetilde{g}_4(w) \Big(\frac{\overline{w}}{|w|} \Big)^{\ell} \sum_{\substack{bc=w \\ N(b) \leq u \\ b,c \equiv 1 \ppmod{\lambda^3}}} \mu(b). \label{Hdef}
\end{align}
Note that we suppressed the dependence on $\ell$ and $u$ in the notation in \eqref{Adef}--\eqref{Bdef}
and \eqref{Gdef}--\eqref{Hdef}.
Divisor bounds and \eqref{sqrootcancel}--\eqref{tildedef} imply that 
\begin{equation} \label{unnormalisedbd}
A(v), B(w), G(v), H(w) \ll X^{\varepsilon},
\end{equation}
uniformly in $\ell$ and $u$. All four functions  
are supported on squarefree elements in $\mathbb{Z}[i]$ that are congruent to $1 \ppmod{\lambda^3}$ by \eqref{sqrootcancel}.

We first estimate \eqref{2primeintermed}.
We dyadically partition $N(v) \sim V$ and $N(w) \sim W$, where
\begin{equation} \label{supportcond}
u/2 \leq V \leq u^2, \quad \text{and} \quad X/4 \leq VW \leq 2X. 
\end{equation}
Thus 
\begin{equation} \label{dyadicpart}
|\Sigma_{2^{\prime \prime},\beta}(X,\ell,u)| \leq \sum_{V,W \text{ dyadic}} \sum_{\substack{\eta,\gamma \in \{1,1+\lambda^3 \} \ppmod{4} \\ \eta \gamma \equiv \beta \ppmod{4}}} 
\Big |  \sum_{\substack{v,w \in \mathbb{Z}[i] \\  N(v) \sim V, \hspace{0.05cm} N(w) \sim W  \\ v \equiv \eta \ppmod{4} \\ w \equiv \gamma \ppmod{4}}}
A(v) B(w) \Big(\frac{w}{v} \Big)_2 R \Big( \frac{N(vw)}{X} \Big) \Big |.
\end{equation}
Define the Dirichlet polynomial,
\begin{equation} \label{Fdef}
P_{\eta,\gamma}(s;V,W):=
\sum_{\substack{v,w \in \mathbb{Z}[i] \\ N(v) \sim V, \hspace{0.1cm} N(w) \sim W  \\  v \equiv \eta \ppmod{4} \\ w \equiv \gamma \ppmod{4} }}
A(v) B(w) \Big( \frac{w}{v} \Big)_2 N(vw)^{-s}, \quad s \in \mathbb{C}.
\end{equation}
Mellin inversion of the smooth function $R$ gives,
\begin{equation*}
\sum_{\substack{v,w \in \mathbb{Z}[i] \\ N(v) \sim V, \hspace{0.05cm} N(w) \sim W \\ v \equiv \eta \ppmod{4} \\ w \equiv \gamma \ppmod{4} }}
A(v) B(w) \Big( \frac{w}{v} \Big)_2 R \Big( \frac{N(vw)}{X} \Big) \\
=\frac{1}{2 \pi i} \int_{-iX^{\varepsilon}}^{iX^{\varepsilon}} \widehat{R}(s) P_{\eta,\gamma}(s;V,W) X^s ds+O(X^{-1000}),
\end{equation*}
where the error term follows from the rapid decay of $\widehat{R}(s)$ in fixed vertical strips and \eqref{unnormalisedbd}. Thus 
\begin{equation} \label{perroncor}
 \sum_{\substack{v,w \in \mathbb{Z}[i] \\ N(v) \sim V, \hspace{0.05cm} N(w) \sim W \\ v \equiv \eta \ppmod{4} \\ w \equiv \gamma \ppmod{4} }} 
A(v) B(w) \Big( \frac{w}{v} \Big)_2 R \Big( \frac{N(vw)}{X} \Big)
 \ll X^{-1000}+ \int_{-iX^{\varepsilon}}^{iX^{\varepsilon}} |\widehat{R}(s)| |P_{\eta,\gamma}(s;V,W)| |ds|.
\end{equation}
Set
\begin{align} \label{tildeweight}
\widetilde{A}(v):=A(v) N(v)^{-it} \quad \text{and} \quad \widetilde{B}(w):=B(w) N(w)^{-it}.
\end{align}
From \eqref{unnormalisedbd} we find that
\begin{equation} \label{tildebd}
 \widetilde{A}(v), \widetilde{B}(w) \ll X^{\varepsilon},
\end{equation}
uniformly in $\ell, u$ and $t$. From \eqref{Fdef} we see that 
\begin{equation} \label{F1line}
P_{\eta,\gamma}(it;V,W)=
\sum_{\substack{N(v) \sim V \\ v \equiv \eta \ppmod{4} }} \widetilde{A}(v) \sum_{\substack{ N(w) \sim W \\ w \equiv \gamma \ppmod{4}}} \widetilde{B}(w) 
\Big(\frac{w}{v} \Big)_2.
\end{equation}
Applying the Cauchy-Schwarz inequality and using the fact that the weights $\widetilde{A}(v)$ and $\widetilde{B}(w)$
are supported on squarefrees, the quadratic large sieve
(Theorem \ref{quadsieve}), and \eqref{tildebd}, we obtain
\begin{align} \label{sqbd}
& | P_{\eta,\gamma}(it;V,W)|^2 \nonumber \\
&= \Big | \sum_{\substack{ N(v) \sim V \\ v \equiv \eta \ppmod{4} }} \widetilde{A}(v) \sum_{\substack{ N(w) \sim W \\ w \equiv \gamma \ppmod{4}} } \widetilde{B}(w) 
\Big( \frac{w}{v} \Big)_2  \Big |^2 \nonumber \\
& \leq \Big( \sum_{\substack{ N(v) \sim V \\ v \equiv \eta \ppmod{4}}} |\widetilde{A}(v)|^2 \Big) \Big(  \sum_{\substack{ N(v) \sim V \\ v \equiv \eta \ppmod{4}}}
\mu^2(v) \Big | \sum_{\substack{ N(w) \sim W \\ w \equiv \gamma \ppmod{4} }} \widetilde{B}(w) \Big( \frac{w}{v} \Big)_2 \Big |^2  \Big) \nonumber \\
& \ll (VW)^{1+\varepsilon}(V+W),
\end{align}
uniformly in $t$.
We substitute the bound \eqref{sqbd} into \eqref{perroncor} 
and obtain,
\begin{equation} \label{dyadbd}
 \sum_{\substack{v,w \in \mathbb{Z}[i] \\ N(v) \sim V, \hspace{0.05cm} N(w) \sim W \\ v \equiv \eta \ppmod{4} \\ w \equiv \gamma \ppmod{4} }} 
A(v) B(w) \Big( \frac{w}{v} \Big)_2 R \Big( \frac{N(vw)}{X} \Big) \ll X^{\varepsilon} (VW)^{1/2} (V^{1/2}+W^{1/2}).
\end{equation}
Substituting \eqref{dyadbd} into \eqref{dyadicpart}, and using \eqref{supportcond}, we obtain the result
for $\Sigma_{2^{\prime \prime},\beta}(X,\ell,u)$. The argument for 
$\Sigma_{3,\beta}(X,\ell,u)$ is analogous.
\end{proof}

\section{Dirichlet series with level structure}
Recall the Dirichlet series $\psi^{(4)}_{\beta}(s,\nu,\ell)$ for $\beta \in \{1,1+\lambda^3 \} \ppmod{4}$
defined in \eqref{dir}.
For $\sigma:=\Re(s) >3/2$, $\alpha \in \mathbb{Z}[i]$ with $\alpha \equiv 1 \pmod{\lambda^3}$, and $0 \neq \nu \in \lambda^{-2} \mathbb{Z}[i]$,
consider
\begin{align} 
\psi^{(4)}_{\beta}(s,\nu,\ell;\alpha)&:=\sum_{\substack{c \in \mathbb{Z}[i] \\ c \equiv \beta \ppmod{4} \\ c \equiv 0 \ppmod{\alpha} }} \frac{g_4(\nu,c) ( \frac{\overline{c}}{|c|})^{\ell} }{N(c)^s};
\label{dirlevel} \\
\psi^{(4)}_{\beta \star}(s,\nu,\ell;\alpha)&:=\sum_{\substack{c \in \mathbb{Z}[i] \\ c \equiv \beta \ppmod{4} \\ (c,\alpha)=1 }}
\frac{g_4(\nu,c) ( \frac{\overline{c}}{|c|})^{\ell} }{N(c)^s}. \nonumber
\end{align}
If $\alpha=1$ we omit it (and the $\star$) from the notation.
\begin{lemma} \label{basiclem}
Suppose that $\ell \in \mathbb{Z}$ and $\beta \in \{1,1+\lambda^3\} \ppmod{4}$.
Let $\alpha \in \mathbb{Z}[i]$ and $\nu \in \lambda^{-2} \mathbb{Z}[i]$ be such that $\nu \neq 0$, $(\alpha,\nu)=1$, and $\alpha \equiv 1 \ppmod{\lambda^3}$ is squarefree. 
Let $C(\cdot,\cdot)$ be as given in \eqref{Cdef}, and
\begin{equation} \label{Deltadef}
\Delta(s,\ell;\alpha):=\prod_{\pi\mid \alpha} \Big(1 - N(\pi)^{3-4s}
\Big( \frac{\overline{\pi}}{|\pi|} \Big)^{4 \ell} \Big).
\end{equation}
Then for 
 $\sigma=\Re(s)>3/2$ we have the identities,
\begin{equation}
\psi^{(4)}_{\beta}(s,\nu,\ell;\alpha)= 
(-1)^{C(\alpha,\alpha \beta)} N(\alpha)^{-s} g_4(\nu,\alpha) \Big(\frac{\overline{\alpha}}{|\alpha|} \Big)^{\ell} 
\psi^{(4)}_{\alpha \beta \star}(s,\nu \alpha^2,\ell;\alpha); \label{id1}
\end{equation}
\begin{align}
& \psi^{(4)}_{\beta \star}(s,\alpha^2 \nu, \ell;\alpha) \Delta(s,\ell;\alpha) \nonumber \\
&=\sum_{d \mid \alpha} \mu(d)  (-1)^{C(d,d\beta)} N(d)^{2-3s} \overline{g_4 \Big( \frac{\nu \alpha^2}{d^2}, d \Big)} \Big( \frac{-1}{d} \Big)_4 \Big( \frac{\overline{d}}{|d|} \Big)^{3 \ell} 
\psi^{(4)}_{d \beta} \Big(s, \frac{\nu \alpha^2 }{d^2},\ell \Big); \label{id2} 
\end{align}
and
\begin{align}
& \psi^{(4)}_{\beta}(s,\nu,\ell;\alpha) \Delta(s,\ell;\alpha) \nonumber \\
&=N(\alpha)^{-s} (-1)^{C(\alpha,\alpha \beta)} \Big( \frac{\overline{\alpha}}{|\alpha|} \Big)^{\ell} \nonumber \\
& \times \sum_{d \mid \alpha} \mu(d) (-1)^{C(d,d \alpha \beta)+C(d,d \alpha)} N(d)^{3-3s} g_4 \Big(\nu,\frac{\alpha}{d} \Big) 
\Big(\frac{-1}{d} \Big)_4 \Big( \frac{\overline{d}}{|d|} \Big)^{3 \ell} \psi^{(4)}_{d \alpha \beta} \Big(s,\frac{ \nu \alpha^2 }{d^2},\ell \Big). \label{id3}
\end{align}
\end{lemma}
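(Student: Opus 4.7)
The plan is to establish the three identities via direct manipulation of the defining Dirichlet series, using twisted multiplicativity of the Gauss sum together with biquadratic reciprocity.

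For \eqref{id1}, I would factor $c = \alpha c'$ in \eqref{dirlevel}. Since $\alpha$ is squarefree and $(\alpha,\nu)=1$, the local analysis via \eqref{rel4} shows $g_4(\nu,c) = 0$ unless $(c',\alpha)=1$, so $c' \in \mathbb{Z}[i]$ ranges over squarefree classes coprime to $\alpha$. Twisted multiplicativity \eqref{rel3} gives $g_4(\nu,\alpha c') = (-1)^{C(\alpha,c')} g_4(\nu,\alpha) g_4(\nu\alpha^2,c')$. The congruence $\alpha c' \equiv \beta \pmod 4$ rearranges to $c' \equiv \alpha\beta \pmod 4$ after noting that $\alpha^2 \equiv 1 \pmod 4$ for any $\alpha \equiv 1 \pmod{\lambda^3}$ (a direct calculation with $\lambda^3=-2+2i$). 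Finally, because $N(c') \pmod 8$ depends only on $c' \pmod 4$, the sign $(-1)^{C(\alpha,c')}$ is constant on $c' \equiv \alpha\beta \pmod 4$, hence equals $(-1)^{C(\alpha,\alpha\beta)}$ and can be pulled outside the sum. The remaining factor $(\overline{\alpha}/|\alpha|)^\ell$ and $N(\alpha)^{-s}$ come out trivially.

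For \eqref{id2}, I would apply the elementary identity $\mathbf{1}_{(c,\alpha)=1} = \sum_{d \mid (c,\alpha)} \mu(d)$ to write
\begin{equation*}
\psi^{(4)}_{\beta\star}(s,\nu\alpha^2,\ell;\alpha) = \sum_{d \mid \alpha} \mu(d) \, \psi^{(4)}_\beta(s,\nu\alpha^2,\ell;d),
\end{equation*}
and then evaluate each inner series for $d \mid \alpha$. For each such $d$, writing $c = dc''$ with $c''$ squarefree (forced by the same local analysis as in \eqref{id1}, since we need the $c''$-part of the Gauss sum nonzero) and invoking \eqref{rel3} gives $g_4(\nu\alpha^2,dc'')=(-1)^{C(d,c'')}g_4(\nu\alpha^2,d)g_4(\nu\alpha^2(c'')^2,d)$; applying \eqref{rel1} with the coprime factor $\alpha^2/d^2$ converts $g_4(\nu\alpha^2,d)$ to $\overline{g_4(\nu\alpha^2/d^2, d)}$ times a quartic symbol that, via \eqref{quartquad} and the periodicity of $C$, produces the $(-1/d)_4 (\overline{d}/|d|)^{3\ell} N(d)^{2-3s}$ factors on the RHS. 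The factor $\Delta_\beta(s,\ell;\alpha)$ arises exactly because, in each term of the Möbius sum, one must further remove contributions coming from primes $\pi \mid \alpha$ dividing $c''$; collecting these local corrections across all $\pi \mid \alpha$ yields the stated Euler product, with the sign exponent $C(\pi,\beta)+C(\pi,\pi\beta)$ tracking the congruence class shifts.

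For \eqref{id3}, I would combine the two previous identities: apply \eqref{id1} to the LHS and then \eqref{id2} with $\beta$ replaced by $\alpha\beta$ on the resulting $\psi^{(4)}_{\alpha\beta\star}(s,\nu\alpha^2,\ell;\alpha)$. This works provided $\Delta_\beta(s,\ell;\alpha) = \Delta_{\alpha\beta}(s,\ell;\alpha)$, which is checked prime-by-prime from the identity $C(\pi,\alpha\beta)+C(\pi,\pi\alpha\beta) \equiv C(\pi,\beta)+C(\pi,\pi\beta) \pmod 2$ (the difference is $\frac{N(\pi)-1}{4}\cdot\frac{(N(\alpha)-1)(N(\beta)+N(\pi)N(\beta))}{4}$, which is even because $\pi \mid \alpha$ forces $N(\pi) \mid N(\alpha)$ and two factors of $2$ appear). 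The sign $(-1)^{C(d,d\beta)}$ from \eqref{id2} then combines with the sign from \eqref{id1} and a further biquadratic reciprocity calculation for $g_4(\nu,\alpha) \cdot \overline{g_4(\nu\alpha^2/d^2,d)}$ to yield $g_4(\nu,\alpha/d)$ with the stated compound sign $(-1)^{C(d,d\alpha\beta)+C(d,d\alpha)}$ and an additional $N(d)$ from the normalisation.

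The main obstacle throughout is the careful bookkeeping of signs and symbol characters. The algebraic structure is transparent, but each substitution in \eqref{rel1}, \eqref{rel2}, \eqref{rel3}, and \eqref{bilaw} introduces a factor like $(-1)^{C(\cdot,\cdot)}$ or a quartic/quadratic symbol, and correctly tracking their combination to produce the precise RHS of each identity — in particular the compound exponent $C(d,d\alpha\beta)+C(d,d\alpha)$ in \eqref{id3} — is the only genuinely delicate piece of the argument.
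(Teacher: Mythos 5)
Your sketch of \eqref{id1} matches the paper's argument exactly, and your plan for \eqref{id3} (combine \eqref{id1} and \eqref{id2}, noting $\Delta_\beta = \Delta_{\alpha\beta}$ and simplifying the Gauss sum product $g_4(\nu,\alpha)\overline{g_4(\nu\alpha^2/d^2,d)}$) is also essentially the paper's.

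The gap is in \eqref{id2}, and it is a real one. You claim that for $d \mid \alpha$ with $d \neq 1$, the nonzero contributions to $\psi^{(4)}_\beta(s,\nu\alpha^2,\ell;d)$ come from $c = dc''$ with $c''$ squarefree, ``forced by the same local analysis as in \eqref{id1}.'' But the local analysis here is \emph{not} the same as in \eqref{id1}. In \eqref{id1} one has $(\nu,\alpha)=1$, so at each prime $\pi \mid \alpha$ the shift $\nu$ satisfies $\pi^0 \| \nu$, and \eqref{rel4} with $k=0$ forces $\pi^1 \| c$. In \eqref{id2} the shift is $\nu\alpha^2$, and since $\alpha$ is squarefree and coprime to $\nu$, each prime $\pi \mid \alpha$ satisfies $\pi^2 \| \nu\alpha^2$; now \eqref{rel4} with $k=2$ shows $g_4(\pi^2\cdot(\text{coprime}), \pi^j) \ne 0$ only for $j \in \{0,3\}$. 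Thus if $\pi \mid d \mid c$ the Gauss sum vanishes unless $\pi^3 \| c$, i.e.\ $d^3 \| c$ and your $c'' = c/d$ has $d^2 \mid c''$, so it is not squarefree. As a consequence, \eqref{rel3} cannot be applied to the factorization $c = d \cdot c''$ because $(d,c'')\neq 1$, your subsequent use of \eqref{rel1} to convert $g_4(\nu\alpha^2,d)$ to $\overline{g_4(\nu\alpha^2/d^2,d)}$ does not apply directly (the arguments are not coprime), and the individual M\"obius terms $\psi^{(4)}_\beta(s,\nu\alpha^2,\ell;d)$ have a richer structure than you account for: the nonzero $c$ therein run over $c = e^3 c'''$ with $d \mid e \mid \alpha$ and $c'''$ squarefree coprime to $\alpha$. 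The paper sidesteps this complexity by establishing a single-prime recursion (its \eqref{for-induction}, obtained by combining two ``one-step'' decompositions of the sum, in which the cases $\pi^0\|c$ and $\pi^1\|c$ and then $\pi^0\|c$ and $\pi^3\|c$ are isolated) and then inducting on $\omega(\alpha)$. Your M\"obius skeleton is plausible as a reorganization, but the hand-waving about ``local corrections across all $\pi \mid \alpha$ yield the Euler product $\Delta_\beta$'' is precisely where the cubic-exponent phenomenon must be confronted, and the sketch as written does not do so.
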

\begin{proof}
We first prove \eqref{id1}. If $c \equiv 1 \ppmod{\lambda^3}$ and 
$c \equiv 0 \ppmod{\alpha}$ with $(c,\nu)=1$,
then by \eqref{rel1} (with $r \rightarrow \nu$ and $\nu \rightarrow 1$), \eqref{rel2} and \eqref{rel4} we have $g_4(\nu, c)=0$ unless $c = \alpha c^{\prime}$ with $\mu^2(\alpha c^{\prime})=1$. 
Using this observation, \eqref{Cdef}, and \eqref{rel3} we obtain
\begin{align*}
 \psi^{(4)}_{\beta}(s,\nu,\ell;\alpha)&= \sum_{\substack{c \in \mathbb{Z}[i] \\ c \equiv \alpha \beta \ppmod{4} \\ (c, \alpha)=1}} 
 \frac{g_4(\nu, \alpha c) \big( \frac{\overline{\alpha c}}{|\alpha c|} \big)^{\ell}}{N(\alpha c)^s}\\
 &=(-1)^{C(\alpha,\alpha \beta)}   N(\alpha)^{-s} g_4(\nu, \alpha) \Big(\frac{\overline{\alpha}}{|\alpha|}  \Big)^{\ell}
 \sum_{\substack{c \in \mathbb{Z}[i]  \\ c \equiv \alpha \beta \ppmod{4}
 \\ (c, \alpha)=1}}  \frac{g_4(\nu \alpha^2, c) \big( \frac{\overline{c}}{|c|} \big)^{\ell}}{N(c)^s}  \\
 &=(-1)^{C(\alpha,\alpha \beta)}  N(\alpha)^{-s} g_4(\nu,\alpha) \Big(\frac{\overline{\alpha}}{|\alpha|}  \Big)^{\ell} 
 \psi^{(4)}_{\alpha \beta \star}(s,\nu \alpha^2,\ell;\alpha).
 \end{align*}
 
We now prove \eqref{id2}. Let $\gamma, \pi \in \mathbb{Z}[i]$ be such that $\gamma \equiv 1 \ppmod{\lambda^3}$, 
$\pi \equiv 1 \ppmod{\lambda^3}$ a prime, and $(\pi,\nu \gamma)=1$.
Then by \eqref{Cdef}--\eqref{rel4} we obtain,
\begin{align} \label{intermed1}
\psi^{(4)}_{\beta \star}(s,\nu,\ell;\gamma) &= \sum_{k = 0}^\infty \sum_{\substack{c \in \mathbb{Z}[i] \\ c \equiv \pi^k \beta \ppmod{4} \\ (c, \pi \gamma)=1}} \frac{g_4(\nu, \pi^k c) \big( \frac{\overline{\pi^k c}}{|\pi^k c|} \big)^{\ell}}{N(c \pi^k)^s} \nonumber \\
&=\sum_{\substack{c \in \mathbb{Z}[i] \\ c \equiv \beta \ppmod{4} \\ (c, \pi \gamma)=1}} \frac{g_4(\nu,c) \big(\frac{\overline{c}}{|c|} \big)^{\ell}}{N(c)^s}
+ \sum_{\substack{c \in \mathbb{Z}[i] \\
c \equiv \pi \beta \ppmod{4} \\ (c, \pi \gamma)=1}} \frac{g_4(\nu, \pi c) \big( \frac{\overline{\pi c}}{|\pi c|} \big)^{\ell}}{N(c \pi)^s} \nonumber \\
 &=\psi^{(4)}_{\beta \star}(s,\nu,\ell; \pi \gamma) \nonumber \\
 & + (-1)^{C(\pi,\pi \beta)} N(\pi)^{-s} g_4(\nu,\pi) \Big( \frac{\overline{\pi}}{|\pi|} \Big)^{\ell} \psi^{(4)}_{\pi \beta \star}(s,\pi^2 \nu,\ell;\pi \gamma). 
 \end{align} 
With a similar argument using \eqref{Cdef}--\eqref{rel4} we obtain,
\begin{align} \label{intermed2}
\psi^{(4)}_{\beta \star}(s,\pi^2 \nu,\ell;\gamma) &= \sum_{\substack{c \in \mathbb{Z}[i] \\ c \equiv \beta \ppmod{4}
\\ (c, \pi \gamma)=1}} \frac{g_4(\pi^2 \nu, c) \big( \frac{\overline{c}}{|c|} \big)^{\ell}}{N(c)^s}  + \sum_{\substack{c \in \mathbb{Z}[i] \\ c \equiv \pi \beta \ppmod{4} \\ (c, \pi \gamma)=1}} \frac{g_4(\pi^2 \nu, \pi^3 c) \big(\frac{
\overline{\pi^3 c}}{|\pi^3 c|} \big)^{\ell} }{N(c \pi^3)^s} \nonumber \\
& =\psi^{(4)}_{\beta \star}(s,\pi^2 \nu,\ell;\pi \gamma) \nonumber \\
& + (-1)^{C(\pi,\pi \beta)} N(\pi)^{-3s} g_4(\pi^2 \nu, \pi^3) \Big( \frac{\overline{\pi}}{|\pi|} \Big)^{3 \ell} 
\psi^{(4)}_{\pi \beta \star}(s,\nu,\ell;\pi \gamma),
 \end{align}
 since by \eqref{rel1} and \eqref{rel3} we have that
 \begin{equation*}
 g_4(\pi^2 \nu,\pi^3 c)=(-1)^{C(\pi^3,c)} g_4(\pi^2 \nu,\pi^3) g_4(\pi^8 \nu,c)=(-1)^{C(\pi,\pi \beta)} g_4(\pi^2 \nu,\pi^3) g_4(\nu,c).
 \end{equation*}
 After changing variable $\beta \ppmod{4} \rightarrow \pi \beta \ppmod{4}$ in the subscripts of \eqref{intermed1}, we use the result and \eqref{intermed2}
 to eliminate $\psi^{(4)}_{\pi \beta \star}(s,\nu,\ell;\pi \gamma)$. Using \eqref{rel1}, \eqref{rel4}, and \eqref{sqrootcancel}
 to simplify the resulting identity gives
\begin{align} \label{for-induction}
 & \psi^{(4)}_{\beta \star}(s,\pi^2 \nu,\ell;\pi \gamma) \Delta(s,\ell;\pi)
 \nonumber \\
 & = \psi^{(4)}_{\beta \star}(s,\pi^2 \nu,\ell;\gamma) 
 - (-1)^{C(\pi,\pi \beta)}  
 N(\pi)^{2-3s} \overline{g_4(\nu,\pi)} \Big(\frac{-1}{\pi} \Big)_4 \Big( \frac{\overline{\pi}}{|\pi|} \Big)^{3 \ell} \psi^{(4)}_{\pi \beta \star}(s,\nu,\ell;\gamma), \nonumber \\
& \qquad \qquad \qquad \qquad \qquad \qquad \qquad \qquad \qquad \qquad \qquad \qquad \qquad \text{for} \quad (\pi,\nu \gamma)=1.
 \end{align}
 
Recall that we need to prove \eqref{id2} holds for each $\ell \in \mathbb{Z}$, $\beta \in \{1,1+\lambda^3 \} \ppmod{4}$, and
all $\alpha \in \mathbb{Z}[i]$, $0 \neq \nu \in \lambda^{-2} \mathbb{Z}[i]$ such that
$\alpha \equiv 1 \ppmod{\lambda^3}$ is squarefree and $(\alpha,\nu)=1$.
We now induct on the number of primes in the factorisation of $\alpha$
in order to prove \eqref{id2}. Observe that \eqref{id2} is trivially true when $\alpha=1$. 
Setting $\gamma=1$ in \eqref{for-induction}
we obtain
\begin{align*}
 & \psi^{(4)}_{\beta \star}(s,\pi^2 \nu,\ell;\pi) \Delta(s,\ell;\pi) \nonumber \\
 & = \psi^{(4)}_{\beta}(s,\pi^2 \nu,\ell) 
 - (-1)^{C(\pi,\pi \beta) }  
 N(\pi)^{2-3s} \overline{g_4(\nu,\pi)} \Big(\frac{-1}{\pi} \Big)_4 \Big( \frac{\overline{\pi}}{|\pi|} \Big)^{3 \ell} \psi^{(4)}_{\pi \beta}(s,\nu,\ell),
 \end{align*}
 which establishes \eqref{id2} for any $\ell \in \mathbb{Z}$, $\beta \in \{1,1+\lambda^3 \} \ppmod{4}$,
 and $\pi \in \mathbb{Z}[i]$, $0 \neq \nu \in \lambda^{-2} \mathbb{Z}[i]$ such that
$\pi \equiv 1 \ppmod{\lambda^3}$ is prime and $(\pi,\nu)=1$.
Given an integer $k \geq 1$,
suppose that \eqref{id2} is true for all datum $\ell,\beta,\nu,\alpha$ satisfying the given conditions 
and such that $\omega(\alpha) \leq k$. Let $\varpi \in \mathbb{Z}[i]$ be a prime with $\varpi \equiv 1 \ppmod{\lambda^{3}}$
 such that $(\varpi,\alpha \nu)=1$.
Using \eqref{for-induction} with $\nu \rightarrow \alpha^2 \nu$, $\gamma \rightarrow \alpha$, and $\pi \rightarrow \varpi$, we obtain 
\begin{align} \label{indstep}
& \Delta(s,\ell;\alpha) \Delta(s,\ell;\varpi) \psi^{(4)}_{\beta \star}(s, \alpha^2 \varpi^2 \nu,\ell;\alpha \varpi) \nonumber \\ 
&= \Delta(s,\ell;\alpha) \Big(\psi^{(4)}_{\beta \star} (s, \alpha^2 \varpi^2 \nu,\ell; \alpha) \nonumber \\
& - (-1)^{C(\varpi, \varpi \beta)} N(\varpi)^{2-3s} \overline{g_4(\alpha^2 \nu, \varpi)}  \Big(\frac{-1}{\varpi} \Big)_4 \Big( \frac{\overline{\varpi}}{|\varpi|} \Big)^{3 \ell} \psi^{(4)}_{\varpi \beta \star}(s,\alpha^2 \nu,\ell;\alpha) \Big).
\end{align}
Expanding the right side of \eqref{indstep}  
and then applying the inductive hypothesis twice, we obtain
\begin{align} \label{postexpansion}
& \sum_{d \mid \alpha} \mu(d) \Big( (-1)^{C(d,d\beta)} N(d)^{2-3s} \overline{g_4 \Big( \frac{\nu \varpi^2 \alpha^2}{d^2}, d \Big)} \Big( \frac{-1}{d} \Big)_4 \Big( \frac{\overline{d}}{|d|} \Big)^{3 \ell} 
\psi^{(4)}_{d \beta} \Big(s, \frac{\nu \varpi^2 \alpha^2 }{d^2},\ell \Big) \nonumber \\
&-(-1)^{C(\varpi, \varpi \beta)+C(d,\varpi d \beta)} N(\varpi d)^{2-3s}
 \overline{g_4(\alpha^2 \nu, \varpi)} \overline{g_4 \Big( \frac{\nu \alpha^2}{d^2}, d \Big)} \Big( \frac{-1}{\varpi d} \Big)_4 
 \Big( \frac{\overline{\varpi d}}{|\varpi d|} \Big)^{3 \ell} \psi^{(4)}_{\varpi d \beta} \Big(s, \frac{\nu \alpha^2 }{d^2},\ell \Big) \Big).
\end{align}
Using \eqref{rel3} we obtain
\begin{equation} \label{indgauss}
\overline{g_4(\alpha^2 \nu, \varpi)} \overline{g_4 \Big( \frac{\nu \alpha^2}{d^2}, d \Big)}=\overline{g_4 \Big(\frac{\alpha^2 \nu}{d^2},\varpi d \Big)} (-1)^{C(\varpi,d)}.
\end{equation}
After considering all possibilities $\varpi,d,\beta \in \{1,1+\lambda^3 \} \ppmod{4}$, we have the identity
\begin{equation} \label{Crel}
{C(\varpi,\varpi \beta)+C(d,\varpi d \beta)+C(\varpi,d)} \equiv C(\varpi d,\varpi d \beta) \ppmod{2}.
\end{equation}
Substituting \eqref{indgauss} and \eqref{Crel} into \eqref{postexpansion} we obtain 
\begin{equation*}
\sum_{d \mid \varpi \alpha} \mu(d) (-1)^{C(d,d\beta)} N(d)^{2-3s} \overline{g_4 \Big( \frac{\nu \varpi^2 \alpha^2}{d^2}, d \Big)} \Big( \frac{-1}{d} \Big)_4 \Big( \frac{\overline{d}}{|d|} \Big)^{3 \ell} 
\psi^{(4)}_{d \beta} \Big(s, \frac{\nu \varpi^2 \alpha^2 }{d^2},\ell \Big),
\end{equation*}
as required to prove \eqref{id2}.

We now prove \eqref{id3}. Combine \eqref{id1} and \eqref{id2}. We use 
the fact that $\alpha$ is squarefree, \eqref{bilaw}--\eqref{rel2}, 
and \eqref{sqrootcancel} to obtain the fact,
\begin{equation*}
g_4(\nu,\alpha) \overline{g_4 \Big(\frac{\nu \alpha^2}{d^2},d \Big)}=(-1)^{C(d,d \alpha)} N(d) g_4 \Big(\nu,\frac{\alpha}{d} \Big).
\end{equation*}
The identity \eqref{id3} now follows. 
\end{proof}

\begin{prop} \label{levelprop}
Suppose that $\ell \in \mathbb{Z}$ and $\beta \in \{1,1+\lambda^3 \} \ppmod{4}$. Let $\alpha \in \mathbb{Z}[i]$ and $\nu \in \lambda^{-2} \mathbb{Z}[i]$ be such that 
$\nu \neq 0$, $(\alpha,\nu)=1$, and $\alpha \equiv 1 \ppmod{\lambda^3}$ is squarefree. 
The functions $\psi^{(4)}_{\beta}(s,\nu,\ell;\alpha)$ in \eqref{dirlevel} can be meromorphically continued to $\mathbb{C}$;
if $\ell \neq 0$ they are holomorphic in the half-plane $\sigma>1$, if $\ell=0$ they 
have most a possible (simple) pole at $s=5/4$ in the half-plane $\sigma>1$. 

The residue
$p_{\beta}(\nu;\alpha):=\emph{Res}_{s=5/4} \psi^{(4)}_{\beta}(s,\nu,0;\alpha)$ is 
given by
\begin{align} \label{residueprop}
p_{\beta}(\nu;\alpha)&=(-1)^{C(\alpha,\alpha \beta)} \Delta(5/4,0;\alpha)^{-1} N(\alpha)^{-1}  \nonumber \\
& \times \sum_{d \mid \alpha} \mu(d) (-1)^{C(d,d \alpha \beta)+C(d,d \alpha)} N(d)^{-1}
\Big(\frac{-1}{d} \Big)_4 \nonumber \\
& \times \begin{cases}
 \psi^{(4)}_{d \alpha \beta}(\nu)  & \emph{if} \quad \alpha/d \equiv 1 \ppmod{4} \\
 (-1)^{\tfrac{N(d \alpha \beta)-1}{4}} \psi^{(4)}_{(1+\lambda^3)d \alpha \beta}(\nu) &
 \emph{if} \quad \alpha/d \equiv 1+\lambda^3 \ppmod{4},
 \end{cases}
 \end{align}
where $C(\cdot,\cdot)$ is given in \eqref{Cdef}, $\psi_{\beta}^{(4)}(\nu)$ in \eqref{res1},
and $\Delta(s,\ell;\alpha)$ in
\eqref{Deltadef}. 
 
For any $\varepsilon>0$ we have,
\begin{align} \label{convexbdlevel}
\psi^{(4)}_{\beta}(s,\nu,\ell;\alpha) & \ll_{\varepsilon, \emph{ord}_{\lambda}(\nu)}
N(\nu)^{(1/2)(3/2-\sigma)+\varepsilon} N(\alpha)^{2-2 \sigma+\varepsilon}  (|s|^2+\ell^2+1)^{(3/2)(3/2-\sigma)+\varepsilon} \nonumber \\
 & \quad \text{for} \quad 1+\varepsilon<\sigma<3/2+\varepsilon \quad \text{and} \quad  |s- 5/4 |>1/8. 
\end{align}
\end{prop}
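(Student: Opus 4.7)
The plan is to deduce everything from the identity \eqref{id3} of Lemma \ref{basiclem}, which expresses $\psi^{(4)}_{\beta}(s,\nu,\ell;\alpha) \Delta_{\beta}(s,\ell;\alpha)$ as a finite $\mathbb{C}$-linear combination of shifted Dirichlet series $\psi^{(4)}_{d\alpha\beta}(s,\nu\alpha^2/d^2,\ell)$ with $d \mid \alpha$ and readily controlled coefficients. The factor $\Delta_{\beta}(s,\ell;\alpha)$ is a Dirichlet polynomial in $N(\pi)^{-s}$ for $\pi \mid \alpha$ (hence entire), and for $\sigma>1$ each of its Euler factors satisfies $|1-\varepsilon_{\pi} N(\pi)^{3-4s}| \geq 1-N(\pi)^{3-4\sigma} \geq 1/2$, so $|\Delta_{\beta}(s,\ell;\alpha)| \geq 2^{-\omega(\alpha)}$ and $|\Delta_{\beta}(s,\ell;\alpha)|^{-1} \ll N(\alpha)^{\varepsilon}$. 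By Proposition \ref{convexity} together with the relations \eqref{connect1}--\eqref{connect2}, each $\psi^{(4)}_{d\alpha\beta}(s,\cdot,\ell)$ continues meromorphically to $\mathbb{C}$, is entire in $\sigma>1$ when $\ell \neq 0$, and has at most a simple pole at $s=5/4$ in $\sigma>1$ when $\ell=0$. Dividing by the nonvanishing factor $\Delta_{\beta}(s,\ell;\alpha)$ yields the claimed meromorphic continuation and pole structure for $\psi^{(4)}_{\beta}(s,\nu,\ell;\alpha)$.

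For the residue formula, I take $\ell=0$ in \eqref{id3} and compute the residue at $s=5/4$ of both sides. The right-hand side produces $N(\alpha)^{-5/4} (-1)^{C(\alpha,\alpha\beta)} \sum_{d \mid \alpha} \mu(d)(-1)^{C(d,d\alpha\beta)+C(d,d\alpha)} N(d)^{-3/4} g_4(\nu,\alpha/d)(\frac{-1}{d})_4 \cdot \psi^{(4)}_{d\alpha\beta}(\nu(\alpha/d)^2)$, where the last factor is the residue \eqref{res1} evaluated at $\nu(\alpha/d)^2$. Since $\alpha$ is squarefree and $d \mid \alpha$, the quotient $m:=\alpha/d$ is squarefree with $(m,\nu)=1$, so Suzuki's square relation \eqref{sqrel} applies; the case split in \eqref{sqrel} (according to $m \equiv 1$ or $1+\lambda^3 \pmod 4$) is exactly what produces the case split in \eqref{residueprop}. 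Substituting gives a factor $\overline{g_4(\nu,\alpha/d)}/N(\alpha/d)^{3/4}$ (possibly times $(-1)^{(N(d\alpha\beta)-1)/4}$), which combines with $g_4(\nu,\alpha/d)$ using $|g_4(\nu,\alpha/d)|^2=N(\alpha/d)$ (a consequence of \eqref{rel1} and \eqref{sqrootcancel}). Collecting the powers $N(\alpha)^{-5/4}\cdot N(d)^{-3/4}\cdot N(\alpha/d)\cdot N(\alpha/d)^{-3/4}=N(\alpha)^{-1} N(d)^{-1}$ reproduces \eqref{residueprop} after dividing through by $\Delta_{\beta}(5/4,0;\alpha)$. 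The slightly fiddly step here—verifying that the various sign factors $(-1)^{C(\cdot,\cdot)}$ and the $(-1)^{(N(\cdot)-1)/4}$ from \eqref{sqrel} align with the statement—is the main bookkeeping obstacle, but it is mechanical.

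For the convexity bound, I estimate the right side of \eqref{id3} term by term. By Proposition \ref{convexity} and \eqref{connect1}--\eqref{connect2}, for $1+\varepsilon<\sigma<3/2+\varepsilon$ and $|s-5/4|>1/8$ we have
\begin{equation*}
\psi^{(4)}_{d\alpha\beta}(s,\nu\alpha^2/d^2,\ell) \ll N(\nu)^{(1/2)(3/2-\sigma)+\varepsilon} N(\alpha/d)^{3/2-\sigma+2\varepsilon} (|s|^2+\ell^2+1)^{(3/2)(3/2-\sigma)+\varepsilon},
\end{equation*}
and by \eqref{sqrootcancel} we have $|g_4(\nu,\alpha/d)| \leq N(\alpha/d)^{1/2}$. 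Combined with $|N(\alpha)^{-s}|=N(\alpha)^{-\sigma}$, $|N(d)^{3-3s}|=N(d)^{3-3\sigma}$, and $|\Delta_{\beta}|^{-1} \ll N(\alpha)^{\varepsilon}$, each summand contributes at most $N(\alpha)^{-\sigma+\varepsilon} N(d)^{3-3\sigma} N(\alpha/d)^{2-\sigma+2\varepsilon}$ times the $\nu$- and $s$-factors. Writing the divisor sum as $N(\alpha)^{2-\sigma+2\varepsilon} \sum_{d \mid \alpha} N(d)^{1-2\sigma-2\varepsilon} \ll N(\alpha)^{2-\sigma+3\varepsilon}$ (using $\sigma>1$ and squarefreeness of $\alpha$), one obtains the overall bound $N(\alpha)^{2-2\sigma+\varepsilon}$ in $\alpha$, yielding \eqref{convexbdlevel}.
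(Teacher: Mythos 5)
Your proposal is correct and follows essentially the same route as the paper: apply identity \eqref{id3}, divide through by the nonvanishing Dirichlet polynomial $\Delta_{\beta}(s,\ell;\alpha)$, inherit continuation and poles from Proposition \ref{convexity} via \eqref{connect1}--\eqref{connect2}, compute the residue by invoking Suzuki's square relation \eqref{sqrel} with $m=\alpha/d$ and combining $g_4(\nu,\alpha/d)\overline{g_4(\nu,\alpha/d)}=N(\alpha/d)$, and obtain \eqref{convexbdlevel} by a term-by-term application of \eqref{convexbd}. The sign and exponent bookkeeping you flagged as "fiddly but mechanical" works out exactly as you describe, matching the paper's displayed computation.
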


\begin{remark}
Our Proposition \ref{levelprop} is the quartic analogue of the cubic \cite[Lemma 4]{HBP}. 
After the change of variables $s \rightarrow s+1/2$ and a subsequent rescaling by $N(\alpha)^s$ (cf. \eqref{littlefdef}), 
the convexity bound for the function in \eqref{littlefdef} is $\operatorname{GL}_1$ in the $\nu$-aspect,
$\operatorname{GL}_2$ in the $\alpha$-aspect, and $\operatorname{GL}_6$ in the $(|s|+|\ell|+1)$-aspect.
In the cubic case, the corresponding convexity bound is
$\operatorname{GL}_1$ in both the $\nu$ and $\alpha$-aspects, 
and $\operatorname{GL}_4$ in the $(|s|+|\ell|+1)$-aspect.
\end{remark}

\begin{proof}
The claims regarding the meromorphic continuation to all of $\mathbb{C}$ and the
location of the poles in the half-plane $\sigma>1$ follow from 
\eqref{id3}, \eqref{connect1}--\eqref{connect2}, \eqref{Zi1defn}--\eqref{Ginfty},
Proposition \ref{convexity}, and the fact that
$(\Delta(s,\ell;\alpha) G_{\infty}(s,\ell) \zeta_{\mathbb{Q}(i),\lambda}(4s-3,\ell))^{-1}$
is holomorphic in the half plane $\sigma>1$.

The equality \eqref{residueprop} follows from 
combining \eqref{id3}, \eqref{res1},
Suzuki's property \eqref{sqrel}, \eqref{rel1}
and \eqref{sqrootcancel}.
The summand corresponding to $d \mid \alpha$ such that $\alpha/d \equiv 1 \ppmod{4}$
is
\begin{align*}
& (-1)^{C(\alpha,\alpha \beta)} N(\alpha)^{-5/4} \Delta(5/4,0;\alpha)^{-1}  \\
& \times \mu(d) (-1)^{C(d,d \alpha \beta)+C(d,d \alpha)} N(d)^{-3/4} g_4 \Big(\nu,\frac{\alpha}{d} \Big) 
\Big(\frac{-1}{d} \Big)_4  \psi^{(4)}_{d \alpha \beta} \Big( \frac{\nu \alpha^2}{d^2} \Big) \\
&=(-1)^{C(\alpha,\alpha \beta)} N(\alpha)^{-5/4} \Delta(5/4,0;\alpha)^{-1} \\
& \times \mu(d) (-1)^{C(d,d \alpha \beta)+C(d,d \alpha)} N(d)^{-3/4} N \Big( \frac{\alpha}{d} \Big)^{1/4}
\Big(\frac{-1}{d} \Big)_4  \psi^{(4)}_{d \alpha \beta} \left( \nu \right) \\
&=(-1)^{C(\alpha,\alpha \beta)} N(\alpha)^{-1}  \Delta(5/4,0;\alpha)^{-1} \\
& \times \mu(d) (-1)^{C(d,d \alpha \beta)+C(d,d \alpha)} N(d)^{-1} 
\Big(\frac{-1}{d} \Big)_4  \psi^{(4)}_{d \alpha \beta} \left( \nu \right).
\end{align*}
We have a similar computation for the case $\alpha/d \equiv 1 + \lambda^3 \ppmod 4$.

Observe that $\Delta(s,\ell;\alpha)^{-1} \ll 1$ for $\sigma>1+\varepsilon$. Combining \eqref{rel1},
\eqref{sqrootcancel}, \eqref{connect1}--\eqref{connect2}, \eqref{convexbd}, and \eqref{id3} we obtain
\begin{align}
\psi^{(4)}_{\beta}(s,\nu,\ell;\alpha) &\ll_{\text{ord}_{\lambda}(\nu)} 
 (|s|^2+\ell^2+1)^{(3/2)(3/2-\sigma)+\varepsilon} N(\alpha)^{-\sigma} \nonumber \\
& \times \sum_{d \mid \alpha} N(d)^{3-3 \sigma} N \Big( \frac{\alpha}{d} \Big)^{1/2}
N \Big(\frac{\nu \alpha^2}{d^2} \Big)^{(1/2)(3/2-\sigma)+\varepsilon} \nonumber \\
& \ll N(\nu)^{(1/2)(3/2-\sigma)+\varepsilon} N(\alpha)^{2-2\sigma+\varepsilon}  (|s|^2+\ell^2+1)^{(3/2)(3/2-\sigma)+\varepsilon},
\end{align}
for $1+\varepsilon<\sigma<3/2+\varepsilon$ and $|s- 5/4 |>1/8$, as required.
\end{proof}

\begin{corollary} \label{resbd}
Let $\beta \in \{1,1+\lambda^3\} \ppmod{4}$, $0 \neq \nu \in \lambda^{-2} \mathbb{Z}[i]$, and $\varepsilon>0$.
Then for all $\alpha \in \mathbb{Z}[i]$ squarefree with $\alpha \equiv 1 \ppmod{\lambda^3}$
and $(\alpha,\nu)=1$ we have 
\begin{equation*} 
p_{\beta}(\nu;\alpha) \ll_{\varepsilon,\nu} N(\alpha)^{-1+\varepsilon}.
\end{equation*}
\end{corollary}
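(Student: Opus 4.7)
The plan is to read off the estimate directly from the explicit residue formula \eqref{residueprop} in Proposition \ref{levelprop} and bound each of its constituents term-by-term. Write
\begin{equation*}
p_{\beta}(\nu;\alpha)=(-1)^{C(\alpha,\alpha\beta)}\,\Delta_{\beta}(5/4,0;\alpha)^{-1}\,N(\alpha)^{-1}\cdot S(\nu;\alpha),
\end{equation*}
where $S(\nu;\alpha)$ denotes the sum over $d\mid\alpha$ in \eqref{residueprop}. The $N(\alpha)^{-1}$ already delivers the main saving, so it remains to show that the prefactor and $S(\nu;\alpha)$ together contribute at most $O_{\varepsilon,\nu}(N(\alpha)^{\varepsilon})$.

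First I would handle the Euler product $\Delta_{\beta}(5/4,0;\alpha)$ from \eqref{Deltadef}. Each local factor has the form $1 - (\text{unit})\,N(\pi)^{3-5}=1+O(N(\pi)^{-2})$, so uniformly over squarefree $\alpha\equiv 1\ppmod{\lambda^{3}}$ the product is bounded away from $0$ and $\infty$; hence $\Delta_{\beta}(5/4,0;\alpha)^{-1}\ll 1$ with an absolute implied constant. The sign $(-1)^{C(\alpha,\alpha\beta)}$ is trivially $O(1)$.

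Next I would bound $S(\nu;\alpha)$ termwise. The crucial observation is that each summand features a residue of the form $\psi^{(4)}_{\gamma}(\nu)$, where the subscript $\gamma\in\{1,1+\lambda^{3}\}\ppmod{4}$ is determined by $d,\alpha,\beta$ (and the case split in \eqref{residueprop}). Since $\nu$ is fixed and there are only two possible subscripts modulo $4$, the quantity $\psi^{(4)}_{\gamma}(\nu)$ takes at most two values in total, both depending only on $\nu$, giving $\psi^{(4)}_{\gamma}(\nu)\ll_{\nu}1$. The remaining factors $\mu(d)$, $(-1)^{\,\cdots}$, $\bigl(\tfrac{-1}{d}\bigr)_{4}$, and $(-1)^{(N(d\alpha\beta)-1)/4}$ are all of absolute value $\leq 1$. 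Therefore
\begin{equation*}
|S(\nu;\alpha)|\ll_{\nu}\sum_{d\mid\alpha}\mu^{2}(d)\,N(d)^{-1}\ll_{\varepsilon} N(\alpha)^{\varepsilon},
\end{equation*}
the last estimate being the standard divisor bound (in $\mathbb{Z}[i]$), since $\sum_{d\mid\alpha}N(d)^{-1}\leq d(\alpha)\ll_{\varepsilon}N(\alpha)^{\varepsilon}$.

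Combining the three bounds gives $p_{\beta}(\nu;\alpha)\ll_{\varepsilon,\nu}N(\alpha)^{-1+\varepsilon}$, as required. The proof is therefore essentially mechanical once \eqref{residueprop} is available; the only conceptual point is the reduction of the seemingly mysterious residues $\psi^{(4)}_{\gamma}(\nu)$ appearing in $S(\nu;\alpha)$ to finitely many $\nu$-dependent constants via the periodicity inherent in the subscript $\gamma\in\{1,1+\lambda^{3}\}\ppmod{4}$. There is no real obstacle; the slight care needed is just to track that the two cases in \eqref{residueprop} (according as $\alpha/d\equiv 1$ or $1+\lambda^{3}\ppmod{4}$) together still produce at most two distinct residue values for fixed $\nu$.
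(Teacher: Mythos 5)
Your proof is correct and is exactly the intended argument: the paper states Corollary~\ref{resbd} without a separate proof, precisely because it follows by reading off the residue formula \eqref{residueprop} term by term as you do (uniform boundedness of $\Delta_{\beta}(5/4,0;\alpha)^{-1}$, the $\nu$-dependent boundedness of the finitely many residues $\psi^{(4)}_{\gamma}(\nu)$ with $\gamma\in\{1,1+\lambda^3\}\ppmod{4}$, the remaining unimodular factors, and the divisor bound for $\sum_{d\mid\alpha}N(d)^{-1}$). No gaps.
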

\begin{remark}
One obtains the inferior bound $p(1;\alpha) \ll N(\alpha)^{-1/2+\varepsilon}$ if only the 
convexity bound from Proposition \ref{levelprop} is used  (cf. the bottom display
on \cite[pg.~200]{Pat3}).
\end{remark}

\section{Voronoi formula and Average Type-I estimates} \label{avgtype1sec}
Let $R:(0,\infty) \rightarrow \mathbb{C}$ be a smooth function with compact support
in $[1,2]$, $\ell \in \mathbb{Z}$, $\beta \in \{1,1+\lambda^3 \} \ppmod{4}$, 
and $\alpha \in \mathbb{Z}[i]$ squarefree with $\alpha \equiv 1 \ppmod{\lambda^3}$.
Recall the pointwise (in $\alpha$) Type-I sum $F_{\beta}(x,\ell;\alpha)$ in \eqref{Felldefn}, its corresponding 
Dirichlet series $\psi^{(4)}_{\beta}(s,1,\ell;\alpha)$ in \eqref{dirlevel}, and
the Dirichlet polynomial $\Delta(s,\ell;\alpha)$ in \eqref{Deltadef}. 

In this section we prove estimates for the terms $\Sigma_{1,\beta}$
and $\Sigma_{2^{\prime},\beta}$ appearing in \eqref{Vauid}. 
We exploit the averaging over $\alpha$ present in \eqref{sigma1bd} and \eqref{sigma2primebd}
 for additional savings using the quadratic large sieve.
Set
\begin{align}
f_{\beta}(s,\ell;\alpha)&:=N(\alpha)^s \psi^{(4)}_{\beta}(s+1/2,1,\ell;\alpha); \label{littlefdef}  \\
h_{\beta}(s,\ell;\alpha)&:=\zeta_{\mathbb{Q}(i),\lambda}(4s-1,\ell) \Delta(s+1/2,\ell;\alpha)
f_{\beta}(s,\ell;\alpha), \label{littlegdef} 
\end{align}
where $\zeta_{\mathbb{Q}(i),\lambda}(s,\ell)$ is defined by \eqref{def-zeta-lambda}. We now have a preparatory Lemma. 

\begin{lemma} \label{type1init2}
For $x \geq 1$, $1 \leq u \leq x$, and $0<\varepsilon<1/100$, we have 
\begin{equation} \label{residuethmstate}
\sum_{\substack{ N(\alpha) \leq u \\ \alpha \equiv 1 \ppmod{\lambda^3}}} \mu^2(\alpha) |F_{\beta}(x,\ell;\alpha)|  \ll_{\varepsilon,R}
\delta_{\ell=0} \cdot  x^{3/4+\varepsilon}+I(x,\ell,u)+(x (|\ell|+1))^{-100},
\end{equation}
where
\begin{align*}
I(x,\ell,u) 
&:= x^{1/2+\varepsilon} \int_{1/2+\varepsilon-i(x (|\ell|+1))^{\varepsilon}}^{1/2+\varepsilon+i(x (|\ell|+1))^{\varepsilon}}
|\widehat{R}(s)| \Big(
\sum_{\substack{N(\alpha) \leq u \\ \alpha \equiv 1 \ppmod{\lambda^3} }}
\mu^2(\alpha) |h_{\beta}(s,\ell;\alpha)|^2 \Big)^{1/2} | ds |. 
\end{align*}
\end{lemma}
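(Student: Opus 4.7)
The plan is to use Mellin inversion followed by a contour shift. Writing $\widetilde g_4(c) = g_4(c)N(c)^{-1/2}$ in \eqref{Felldefn} and applying Mellin inversion on a line $\operatorname{Re}(s)=1+\varepsilon'$, where the Dirichlet series $\psi^{(4)}_\beta(\cdot+\tfrac12,1,\ell;\alpha)$ converges absolutely, I obtain
$$F_\beta(x,\ell;\alpha) = \frac{1}{2\pi i}\int_{(1+\varepsilon')}\widehat R(s)\, x^s\, \psi^{(4)}_\beta(s+\tfrac12,1,\ell;\alpha)\, ds.$$
By Proposition \ref{levelprop}, I shift to the line $\operatorname{Re}(s)=1/2+\varepsilon$. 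The only possible pole lies at $s+\tfrac12 = 5/4$ (i.e.\ $s=3/4$) and occurs solely when $\ell=0$, with residue $\widehat R(3/4)\, x^{3/4}\, p_\beta(1;\alpha)$. The horizontal segments vanish because $\widehat R$ decays faster than any polynomial while \eqref{convexbdlevel} is polynomial in $|s|$. Summing the residue contribution over $\alpha \equiv 1 \ppmod{\lambda^3}$ with $N(\alpha)\leq u$ and invoking Corollary \ref{resbd} gives
$$\sum_{N(\alpha)\leq u}\mu^2(\alpha)\,|\widehat R(3/4)|\, x^{3/4}\, p_\beta(1;\alpha) \ll x^{3/4+\varepsilon}\sum_{N(\alpha)\leq u}\mu^2(\alpha)\, N(\alpha)^{-1+\varepsilon}\ll x^{3/4+\varepsilon},$$
which accounts for the $\delta_{\ell=0}\cdot x^{3/4+\varepsilon}$ term.

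Next I truncate the integral on $\operatorname{Re}(s)=1/2+\varepsilon$ to $|\operatorname{Im}(s)|\leq T:=(x(|\ell|+1))^\varepsilon$. Pairing the rapid decay of $\widehat R$ with the polynomial bound \eqref{convexbdlevel} (and the trivial estimate $\sum_{N(\alpha)\leq u}\mu^2(\alpha)\,N(\alpha)^{-\varepsilon}\ll u^{O(1)}$) produces the $(x(|\ell|+1))^{-100}$ tail.

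On the truncated segment I rewrite the integrand using \eqref{littlefdef}--\eqref{littlegdef} as
$$\psi^{(4)}_\beta(s+\tfrac12,1,\ell;\alpha) = \frac{h_\beta(s,\ell;\alpha)}{N(\alpha)^s\,\zeta_{\mathbb Q(i),\lambda}(4s-1,\ell)\,\Delta_\beta(s+\tfrac12,\ell;\alpha)}.$$
On $\operatorname{Re}(s)=1/2+\varepsilon$ one has $\operatorname{Re}(4s-1)=1+4\varepsilon$, so $|\zeta_{\mathbb Q(i),\lambda}(4s-1,\ell)|\gg 1$ by absolute convergence; similarly each local factor of $\Delta_\beta(s+\tfrac12,\ell;\alpha)$ in \eqref{Deltadef} equals $1+O(N(\pi)^{-1-4\varepsilon})$, so $|\Delta_\beta(s+\tfrac12,\ell;\alpha)|\asymp 1$ uniformly in $\alpha$. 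Therefore
$$|\psi^{(4)}_\beta(s+\tfrac12,1,\ell;\alpha)|\ll N(\alpha)^{-1/2-\varepsilon}\,|h_\beta(s,\ell;\alpha)|.$$

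Applying Cauchy--Schwarz in the $\alpha$-sum produces
$$\sum_{N(\alpha)\leq u}\mu^2(\alpha)\,N(\alpha)^{-1/2-\varepsilon}\,|h_\beta(s,\ell;\alpha)| \leq \Bigl(\sum_\alpha \mu^2(\alpha)\,N(\alpha)^{-1-2\varepsilon}\Bigr)^{1/2} \Bigl(\sum_\alpha \mu^2(\alpha)\,|h_\beta(s,\ell;\alpha)|^2\Bigr)^{1/2},$$
and the first factor converges absolutely to $O(1)$. Integrating this against $x^{1/2+\varepsilon}|\widehat R(s)|$ over the segment $[1/2+\varepsilon-iT,\, 1/2+\varepsilon+iT]$ yields exactly the quantity $I(x,\ell,u)$ in the statement. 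The main technical point is controlling $|\Delta_\beta(s+\tfrac12,\ell;\alpha)|^{-1}$ and $|\zeta_{\mathbb Q(i),\lambda}(4s-1,\ell)|^{-1}$ uniformly in $\alpha$ and $\ell$ on the critical line, so that the $\alpha$-aspect can be cleanly separated before Cauchy--Schwarz; this is exactly what the auxiliary packaging into $h_\beta$ accomplishes.
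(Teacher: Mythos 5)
Your argument is correct and mirrors the paper's proof in all essentials: Mellin inversion, shift to $\Re(s)=1/2+\varepsilon$ picking up the $s=3/4$ pole when $\ell=0$, Corollary~\ref{resbd} to control the residue contribution, truncation of the contour at height $(x(|\ell|+1))^\varepsilon$ via rapid decay of $\widehat R$ against the polynomial convexity bound, and then Cauchy--Schwarz in $\alpha$ after observing $|\Delta_\beta(s+\tfrac12,\ell;\alpha)\,\zeta_{\mathbb Q(i),\lambda}(4s-1,\ell)|\asymp 1$ on that line. The only cosmetic difference is that you truncate the integral after shifting the contour whereas the paper truncates first on $\Re(s)=1+\varepsilon$ and then shifts; both orderings give the same estimates and are justified by the same inputs.
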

\begin{proof}
Let $\alpha \in \mathbb{Z}[i]$ be squarefree and satisfy $N(\alpha) \leq u$ and $\alpha \equiv 1 \ppmod{\lambda^3}$.
Mellin inversion of the smooth function $R$ gives,
\begin{equation} \label{perronapply}
F_{\beta}(x,\ell;\alpha)=\frac{1}{2 \pi i} \int_{1+\varepsilon-i(x (|\ell|+1))^{\varepsilon}}^{1+\varepsilon+i (x (|\ell|+1))^{\varepsilon}} f_{\beta}(s,\ell;\alpha) \widehat{R}(s)
\Big( \frac{x}{N(\alpha)} \Big )^s ds+O((x (|\ell|+1))^{-1000}),
\end{equation}
where the error term follows from the rapid decay of $\widehat{R}(s)$ in fixed vertical strips,  \eqref{littlefdef},
and \eqref{convexbdlevel}.
We shift the contour to $\sigma=1/2+\varepsilon$, passing a possible simple pole of the integrand of \eqref{perronapply} at $s=3/4$
when $\ell=0$ by \eqref{littlefdef} and Proposition \ref{levelprop}, with residue  
\begin{equation*}
\delta_{\ell=0} \cdot p(1;\alpha) \widehat{R} \Big( \frac{3}{4} \Big) x^{3/4},
\end{equation*}
where $p(1;\alpha)$ is computed in
\eqref{residueprop}.
We obtain
\begin{align} \label{residuethm}
F_{\beta}(x,\ell;\alpha)&=\delta_{\ell=0} \cdot p(1;\alpha) \widehat{R} \Big( \frac{3}{4} \Big) x^{3/4}
+ \frac{1}{2 \pi i} \int_{1/2+\varepsilon-i(x (|\ell|+1))^{\varepsilon}}^{1/2+\varepsilon+i(x (|\ell|+1))^{\varepsilon}}
f_{\beta}(s,\ell;\alpha) \widehat{R}(s) \Big( \frac{x}{N(\alpha)} \Big )^s ds \nonumber \\
&+O((x (|\ell|+1))^{-1000}).
\end{align}

We take the absolute value of both sides of \eqref{residuethm}, and apply the triangle inequality to the right side, and then
introduce the finite sum over squarefree $\alpha \in \mathbb{Z}[i]$ with $\alpha \equiv 1 \ppmod{\lambda^3}$ and $N(\alpha) \leq u$ on both sides.
We interchange the integration over $s$ and summation over $\alpha$ by absolute convergence, and then 
apply the Cauchy-Schwarz inequality in the following way,
\begin{equation} \label{fhineq}
 \sum_{\substack{N(\alpha) \leq u \\ \alpha \equiv 1 \ppmod{\lambda^3} }} 
N(\alpha)^{-1/2-\varepsilon} \mu^2(\alpha) |f_{\beta}(s,\ell;\alpha)|
\ll \Big(\sum_{\substack{N(\alpha) \leq u \\ \alpha \equiv 1 \ppmod{\lambda^3} }}
\mu^2(\alpha) |h_{\beta}(s,\ell;\alpha)|^2 \Big)^{1/2}.
\end{equation}
To obtain the last display we used the fact that
$|\Delta(s+1/2,\ell;\alpha) \zeta_{\mathbb{Q}(i),\lambda}(4s-1,\ell)| \asymp 1 $
 for $\sigma=1/2+\varepsilon$. After having used the triangle inequality in \eqref{residuethm}, we then apply \eqref{fhineq}. The
second and third terms on the right side of 
\eqref{residuethmstate} follow. Using Corollary \ref{resbd} we have
\begin{equation*}
\sum_{\substack{N(\alpha) \leq u \\ \alpha \equiv 1 \ppmod{\lambda^3}}} 
\delta_{\ell=0} \cdot \mu^2(\alpha) \Big | \widehat{R} \Big( \frac{3}{4} \Big) \Big |  |p(1;\alpha)| x^{3/4} 
\ll \delta_{\ell=0} \cdot  u^{\varepsilon} x^{3/4},
\end{equation*}
which gives the first term on the right side of \eqref{residuethmstate}.
The result now follows.
\end{proof}

We now prove a Voronoi formula for $h_{\beta}(s,\ell;\alpha)$.
Write the Dirichlet polynomial in \eqref{Deltadef} as,
\begin{equation} \label{deltaexpand}
\Delta(s,\ell;\alpha)=\sum_{m \mid \alpha} q(m,\ell) N(m)^{3-4s},
\end{equation}
where 
\begin{equation*}
q(m, \ell):= \mu(m) \Big( \frac{\overline{m}}{|m|} \Big)^{4 \ell}.
\end{equation*}

\begin{lemma} \label{hvoronoi}
For $\ell \in \mathbb{Z}$, $\beta \in \{1,1+\lambda^3 \} \ppmod{4}$, 
and $\alpha \in \mathbb{Z}[i]$ squarefree with $\alpha \equiv 1 \ppmod{\lambda^3}$,
let $h_{\beta}(s,\ell,\alpha)$ be as in  
\eqref{littlegdef}.
Suppose $0<\varepsilon<1/100$ and 
$s=\sigma+it \in \mathbb{C}$ satisfies $1/2+\varepsilon \leq \sigma \leq 1+\varepsilon$ and $s \neq 3/4$.
Then for $x \geq 1$ we have,
\vspace{-0.125cm}
\begin{align}
& (-1)^{C(\alpha,\alpha \beta)} \widetilde{g}_4(\alpha) \Big( \frac{\overline{\alpha}}{|\alpha|} \Big)^{\ell} \sum_{m \mid \alpha}
q(m,\ell) N(m)^{1-4s}  \sum_{\substack{d \in \mathbb{Z}[i] \\ d \equiv 1 \ppmod{\lambda^3}}} \Big( \frac{\overline{d}}{|d|} \Big)^{4 \ell} N(d)^{1-4s}  \nonumber \\
& \times \sum_{\substack{c \in \mathbb{Z}[i] \\ c \equiv \alpha \beta \ppmod{4}}} 
N(c)^{-s} \widetilde{g}_4(c) \Big( \frac{\overline{c}}{|c|} \Big)^{\ell} 
\Big( \frac{\alpha}{c} \Big)_2 \cdot e^{-N(cd^4 m^4)/x}  \nonumber \\
&=h_{\beta}(s,\ell;\alpha)+O_{\varepsilon} ( \delta_{\ell=0} \cdot e^{-|t|} \cdot N(\alpha)^{-1/4+\varepsilon} x^{3/4-\sigma}) +
N(\alpha)^{1-2s} (-1)^{C(\alpha,\alpha \beta)} \Big( \frac{\overline{\alpha}}{|\alpha|} \Big)^{\ell}  \nonumber \\
& \times \sum_{m \mid \alpha}
N(m)^{-s} \mu(m) (-1)^{C(m,m \alpha \beta)+C(m,m \alpha)} \widetilde{g}_4 \Big(\frac{\alpha}{m} \Big) 
\Big(\frac{-1}{m} \Big)_4 \Big( \frac{\overline{m}}{|m|} \Big)^{3 \ell} \Big( \frac{\overline{\alpha/m}}{|\alpha/m|} \Big)^{2 \ell} \nonumber \\
& \times \sum_{\substack{ j=1,\ldots,24 \\ \eta \in \{1,1+\lambda^3\} \ppmod{4} }} \sum_{(a,b) \in S(\kappa_j,1)}
\mathscr{H}(3/2-s,\eta,\ell,\kappa_j,(a,b);\alpha,m)
\sum_{\substack{d \in \mathbb{Z}[i] \\ d \equiv 1 \ppmod{\lambda^3} } } N(d)^{4s-3} \Big( \frac{\overline{d}}{|d|} \Big)^{-4 \ell}  \nonumber \\
& \times \sum_{\substack{c \in \mathbb{Z}[i] \\ c \equiv \eta \ppmod{4}}} N(c)^{s-1} \widetilde{g}_4 \Big ( \frac{ (-1)^a \lambda^{2b} \alpha^2}{m^2},c \Big) 
\Big(\frac{\overline{c}}{|c|} \Big)^{-\ell}
\cdot \mathscr{T}_{m \alpha \beta} \Big(s,\frac{N(\alpha^2 m)}{2^b x N(cd^4)},\ell;j \Big),
\end{align}
\vspace{-0.375cm}
where for $y>0$,
\begin{align} \label{Tbetadef}
\mathscr{T}_{\beta}(s,y,\ell;j)
&:= \frac{1}{2 \pi i} \int_{-\sigma-2 \varepsilon-i \infty}^{-\sigma-2 \varepsilon+i \infty}  
y^{-w}
 \frac{G_{\infty}(3/2-s-w,-\ell)}{G_{\infty}(1/2+s+w,\ell)} \Gamma(w) \nonumber \\
& \times
\Big(\begin{cases}
(-1)^{\ell} A_{ji_1}(2^{-s-w-1/2},\ell) & \emph{if} \quad \beta \equiv 1 \ppmod{4} \\
(-1)^{\ell+1} A_{ji_2}(2^{-s-w-1/2},\ell) & \emph{if} \quad \beta \equiv 1+\lambda^3 \ppmod{4}
\end{cases} \Big ) dw,
\end{align}
the functions $G_{\infty}(z,\ell)$ are given in \eqref{Ginfty},
$1 \leq i_1,i_2 \leq 24$ are such that $\kappa_{i_1}=0$ and $\kappa_{i_2}=\tfrac{\lambda^4}{1+\lambda^3}$,
and the functions $A_{ij}(2^{-z},\ell)$ are those appearing in \eqref{funceqn},
\begin{align} \label{Hdefn}
&\mathscr{H}(s,\eta,\ell,\kappa_j,(a,b);\alpha,m) \nonumber \\
&=
\overline{\Big(\frac{\alpha_j}{\gamma_j} \Big)}_4 \check{e} \Big({-\frac{\gamma_j^{\prime} \alpha_j (\alpha^2/m^2) }{\lambda^4} } \Big)  \nonumber \\
& \times
\begin{cases}
1 & \emph{if } \gamma_j=-1 \emph{ and } \eta \equiv 1 \ppmod{4} \\
1 & \emph{if } \gamma_j=-1-\lambda^3, -1+\lambda^3 \emph{ and } \eta \equiv 1+\lambda^3 \ppmod{4}  \\
(-1)^{\ell}  &
\emph{if } \gamma_j=1 \emph{ and } \eta \equiv 1 \ppmod{4} \\ 
(-1)^{\ell+1}
& \emph{if } \gamma_j=1+\lambda^3,1-\lambda^3 \emph{ and } \eta \equiv 1+\lambda^3 \ppmod{4}  \\
0 & \emph{otherwise} 
\end{cases} \nonumber \\
&+\overline{\Big(\frac{-\gamma_j}{\alpha_j} \Big)}_4 (-1)^{\ell a} \Big( \frac{\overline{\lambda}^b}{|\lambda^b|} \Big)^{\ell}
2^{-bs} \Big( \frac{i^a \lambda^b}{\alpha/m} \Big)_2 \nonumber \\
& \times
\begin{cases}
\Gamma_{1}(1,i^a \lambda^b) &  \emph{if } (\gamma_j,\lambda) \neq 1, \alpha_j=1, \emph{ and } \eta \equiv 1 \ppmod{4}  \\
\Gamma_{1+\lambda^3}(1,i^a \lambda^b) & \emph{if } (\gamma_j,\lambda) \neq 1, \alpha_j=1+\lambda^3, \emph{ and } \eta \equiv 1 \ppmod{4}  \\
\Gamma_{1+\lambda^3}(1,i^a \lambda^b) & \emph{if } (\gamma_j,\lambda) \neq 1, \alpha_j=1, \emph{ and } \eta \equiv 1+\lambda^3 \ppmod{4}  \\
-\Gamma_{1}(1,i^a \lambda^b) & \emph{if } (\gamma_j,\lambda) \neq 1, \alpha_j=1+\lambda^3, \emph{ and }
\eta \equiv 1+\lambda^3 \ppmod{4}
\end{cases},
\end{align}
$\gamma_j^{\prime}$ denotes the inverse of $\gamma_j \ppmod{4}$,  
the $\Gamma_{\beta}(\nu,i^a \lambda^b)$ are as in \eqref{gammagauss}, and the $S(\nu,\kappa_j)$ appear in
\eqref{suppdef}.
\end{lemma}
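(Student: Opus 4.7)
The plan is to Mellin-invert the Gaussian weight, identify the resulting Dirichlet series, shift the $w$-contour past a handful of simple poles, and on the shifted line apply the Kubota matrix functional equation to extract the dual sum. Concretely, apply $e^{-y}=\frac{1}{2\pi i}\int_{(c_0)}\Gamma(w)y^{-w}\,dw$ with $c_0>0$ large and $y=N(cd^4m^4)/x$, substitute, and interchange orders of summation and integration (justified for $c_0$ large by absolute convergence). The $m$-sum over $m\mid\alpha$ collapses to $\Delta_\beta(s+w+\tfrac12,\ell;\alpha)$ via \eqref{deltaexpand}; the $d$-sum gives $\zeta_{\mathbb{Q}(i),\lambda}(4(s+w)-1,\ell)$; and the $c$-sum, once I use $(\alpha^2/c)_4=(\alpha/c)_2$ and \eqref{rel1} to rewrite $(\alpha/c)_2\,\widetilde g_4(c)=\widetilde g_4(\alpha^2,c)$, becomes $\psi^{(4)}_{\alpha\beta\star}(s+w+\tfrac12,\alpha^2,\ell;\alpha)$; the $\star$-condition $(c,\alpha)=1$ is automatic because $(\alpha/c)_2$ vanishes when $(c,\alpha)\neq 1$.

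Shift the $w$-contour down to $\Re(w)=-\sigma-2\varepsilon$. Three possible simple poles are crossed: $w=0$ from $\Gamma$; $w=3/4-s$ from $\psi^{(4)}_{\alpha\beta\star}$ (only when $\ell=0$); and $w=1/2-s$ from $\zeta$ (only when $\ell=0$). The residue at $w=0$ produces precisely $h_\beta(s,\ell;\alpha)$: by \eqref{id1} combined with $|\widetilde g_4(\alpha)|^2=\mu^2(\alpha)=1$, one has $(-1)^{C(\alpha,\alpha\beta)}\widetilde g_4(\alpha)(\overline\alpha/|\alpha|)^\ell\,\psi^{(4)}_{\alpha\beta\star}(s+\tfrac12,\alpha^2,\ell;\alpha)=f_\beta(s,\ell;\alpha)$, and multiplication by $\zeta_{\mathbb{Q}(i),\lambda}(4s-1,\ell)\Delta_\beta(s+\tfrac12,\ell;\alpha)$ recovers $h_\beta$. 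The residue at $w=3/4-s$ is governed by Corollary \ref{resbd} and the decay $\Gamma(3/4-s)\ll e^{-|t|}$, yielding exactly the stated error $\delta_{\ell=0}e^{-|t|}N(\alpha)^{-1/4+\varepsilon}x^{3/4-\sigma}$. The residue at $w=1/2-s$ is smaller and is absorbed into the same error once one uses Suzuki's relation \eqref{sqrel} to bound $\psi^{(4)}_{\alpha\beta\star}(1,\alpha^2,0;\alpha)$ sharper than the boundary convexity estimate of Proposition \ref{levelprop}.

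On the shifted contour I convert the integrand to the dual form. By \eqref{id3} applied with $\nu=1$, the product $\Delta_\beta(s+w+\tfrac12,\ell;\alpha)f_\beta(s+w,\ell;\alpha)$ unfolds as $(-1)^{C(\alpha,\alpha\beta)}(\overline\alpha/|\alpha|)^\ell\sum_{m\mid\alpha}\mu(m)(-1)^{C(m,m\alpha\beta)+C(m,m\alpha)}N(m)^{1-3(s+w)}\widetilde g_4(\alpha/m)(-1/m)_4(\overline m/|m|)^{3\ell}\psi^{(4)}_{m\alpha\beta}(s+w+\tfrac12,\alpha^2/m^2,\ell)$. Via \eqref{connect1}--\eqref{connect2} each $\psi^{(4)}_{m\alpha\beta}$ is (up to sign) a Fourier--Whittaker coefficient $\mathcal V^{-1}\psi^{(4)}_{i_k 1}$ at a cusp $\kappa_{i_k}\in\{0,\lambda^4/(1+\lambda^3)\}$; forming the completed series $\widehat\psi^{(4)}_{i_k 1}$ of \eqref{hatdef} (which absorbs exactly the factors $\zeta_{\mathbb{Q}(i),\lambda}(4(s+w)-1,\ell)\,G_\infty(s+w+\tfrac12,\ell)$ already present in the integrand) and applying the matrix functional equation \eqref{funceqn} flips the argument to $3/2-s-w$, introduces the conductor factor $N(\alpha^2/m^2)^{1/2-s-w}(\overline{\alpha/m}/|\alpha/m|)^{2\ell}$, the gamma ratio $G_\infty(3/2-s-w,-\ell)/G_\infty(s+w+\tfrac12,\ell)$, and the rational factors $A_{ji_k}(2^{-s-w-1/2},\ell)$ summed over $j=1,\ldots,24$. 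Each $\psi^{(4)}_{j1}(3/2-s-w,\alpha^2/m^2,-\ell)$ is then decomposed via \eqref{coprimecusp}, \eqref{psilinearcombo}, \eqref{indexprop}--\eqref{Scoprimecusp} and the transformation \eqref{gammatransform} (using $\alpha^2/m^2\equiv 1\ppmod{\lambda^3}$) into $\mathbb{C}$-linear combinations of $\psi^{(4)}_\eta(3/2-s-w,(-1)^a\lambda^{2b}\alpha^2/m^2,-\ell)$ with $\eta\in\{1,1+\lambda^3\}\ppmod{4}$ and $(a,b)\in S(\kappa_j,1)$; the accompanying cuspidal symbols and local Gauss sums \eqref{gammagauss} bundle into $\mathscr{H}$ of \eqref{Hdefn}. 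Expanding each $\psi^{(4)}_\eta$ as a Dirichlet series in $c$ and routing the $w$-dependent factors $N(c)^w$, $N(d)^{4w}$, $N(\alpha/m)^{-2w}$, $2^{-bw}$ into the $y^{-w}$ of $\mathscr{T}$ separates the $c$-, $d$- and $m$-sums from the remaining $w$-integral; the $w$-independent powers combine as $N(\alpha/m)^{1-2s}N(m)^{1-3s}=N(\alpha)^{1-2s}N(m)^{-s}$, and the leftover $w$-integrand collects precisely into $\mathscr{T}_{m\alpha\beta}$ of \eqref{Tbetadef}.

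The main obstacle is the bookkeeping in this final step: the 24-fold matrix functional equation couples all essential cusps, and consolidating the Kubota multipliers, the biquadratic-reciprocity signs $(-1)^{C(\cdot,\cdot)}$, the ramified-prime Gauss sums $\Gamma_\eta(1,i^a\lambda^b)$, and the quadratic twists $(i^a\lambda^b/(\alpha/m))_2$ across the two cusp families of \eqref{cuspcases} into the single closed-form $\mathscr{H}$ demands meticulous case-by-case verification.
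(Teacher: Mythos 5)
Your overall route matches the paper's: Mellin-invert the exponential against $\Gamma(w)$, recognize the integrand as $h_\beta(s+w,\ell;\alpha)$ (equivalently, as you write it, the product $\zeta_{\mathbb{Q}(i),\lambda}(4(s+w)-1,\ell)\,\Delta_\beta(s+w+1/2,\ell;\alpha)\,f_\beta(s+w,\ell;\alpha)$), shift the contour to $\Re(w)=-\sigma-2\varepsilon$, extract the pole at $w=0$ to recover $h_\beta(s,\ell;\alpha)$ and the pole at $w=3/4-s$ (when $\ell=0$) to obtain the $\delta_{\ell=0}\,e^{-|t|}\,N(\alpha)^{-1/4+\varepsilon}\,x^{3/4-\sigma}$ term via Corollary~\ref{resbd}, and then unfold on the shifted line via \eqref{id3}, \eqref{connect1}--\eqref{connect2}, \eqref{hatdef}, and the matrix functional equation \eqref{funceqn}, decomposing the resulting $Z_{j1}$ using Remark~\ref{lincombo}. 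The identification of the $m$-, $d$-, and $c$-sums with $\Delta_\beta$, $\zeta_{\mathbb{Q}(i),\lambda}$, and $\psi^{(4)}_{\alpha\beta\star}$ is correct, as is the treatment of the residues at $w=0$ and $w=3/4-s$.

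There is, however, a genuine gap in your pole accounting. You assert a third simple pole of the integrand at $w=1/2-s$ coming from $\zeta_{\mathbb{Q}(i),\lambda}(4(s+w)-1,0)$, and propose to control its ``residue'' via Suzuki's relation \eqref{sqrel}. No such pole exists. By Proposition~\ref{convexity}, the completed function $\widehat{\psi}^{(4)}_{i1}(z,\nu,0)=G_\infty(z,0)\,\zeta_{\mathbb{Q}(i),\lambda}(4z-3,0)\,\psi^{(4)}_{i1}(z,\nu,0)$ has at most simple poles at $z=3/4$ and $z=5/4$; since $G_\infty(1,0)=\Gamma_{\mathbb{C}}(1/4)\Gamma_{\mathbb{C}}(1/2)\Gamma_{\mathbb{C}}(3/4)$ is finite and nonzero, the product $Z_{i1}(z,\nu,0)=\zeta_{\mathbb{Q}(i),\lambda}(4z-3,0)\,\psi^{(4)}_{i1}(z,\nu,0)$ is analytic at $z=1$, i.e.\ $\psi^{(4)}_{i1}$ carries a zero at $z=1$ exactly cancelling the simple pole of $\zeta_{\mathbb{Q}(i),\lambda}(4z-3,0)$ there. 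Taking $z=s+w+1/2$ and recalling the expansion \eqref{funcprep} of $h_\beta(s+w,\ell;\alpha)$ in terms of $Z_{i_1 1}$ and $Z_{i_2 1}$ with entire coefficients, the integrand $h_\beta(s+w,0;\alpha)\Gamma(w)x^w$ is analytic at $w=1/2-s$; this is precisely the content of Remark~\ref{Zremark}, and it is why the paper sees only the two poles $w=0$ and $w=3/4-s$. Your proposed appeal to \eqref{sqrel} would not repair matters even if a pole were present: that relation evaluates the \emph{residue} of $\psi^{(4)}_\beta$ at $s=5/4$, not the boundary value $\psi^{(4)}_{\alpha\beta\star}(1,\alpha^2,0;\alpha)$, and so offers no handle on the quantity you invoke. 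The clean resolution is to work throughout with the completed functions $Z_{i1}$ and $\widehat{\psi}^{(4)}_{i1}$, as the paper does, from which the full pole structure of $h_\beta(s+w,\ell;\alpha)$ follows at once.
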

\begin{remark}
We remind the reader not to confuse the squarefree $\alpha \in \mathbb{Z}[i]$ with the $\alpha_j \in \{0,\lambda^4,\lambda^2,i \lambda^2,\lambda^3,1,1+\lambda^3 \}$
that denotes the numerator of the cusp $\kappa_j=\alpha_j/\gamma_j$ (cf. \eqref{cusps}).
\end{remark}
\begin{proof}

Observe that for $\Re(s+w)>1$ we have,
\begin{align} \label{hbetaexpand}
h_{\beta}(s+w,\ell;\alpha) &= N(\alpha)^{s+w} \sum_{\substack{m \in \Z[i]\\ m \mid \alpha}} q(m, \ell) N(m)^{1-4(s+w)} 
\sum_{\substack{d \in \Z[i] \\ d \equiv 1 \ppmod{\lambda^3}}} \Big( \frac{\overline{d}}{|d|} \Big)^{4\ell} N(d)^{1-4(s+w)} \nonumber \\
& \times \sum_{\substack{c \in \Z[i]\\ c \equiv \alpha \beta \ppmod 4}}  {g}_4( \alpha c) \Big( \frac{\overline{ \alpha c}}{|\alpha c|} \Big)^{\ell} N(\alpha c)^{-s-w-1/2} \nonumber \\
& =   (-1)^{C(\alpha, \alpha \beta)} \widetilde{g}_4(\alpha) \Big( \frac{\overline{ \alpha }}{|\alpha |} \Big)^\ell  \sum_{\substack{m \in \Z[i]\\ m \mid \alpha}} q(m, \ell) N(m)^{1-4s} 
\sum_{\substack{d \in \Z[i] \\ d \equiv 1 \ppmod{\lambda^3}}} \Big( \frac{\overline{d}}{|d|} \Big)^{4\ell} N(d)^{1-4s} \nonumber \\
& \times \sum_{\substack{c \in \Z[i] \\c \equiv \alpha \beta \ppmod 4}}  N(c)^{-s} \widetilde{g}_4( c) \Big( \frac{\overline{ c}}{| c|} \Big)^\ell 
\Big( \frac{\alpha}{c} \Big)_2 N(cd^4 m^4)^{-w},
\end{align}
where we used \eqref{bilaw}--\eqref{Cdef} and \eqref{rel2} to write
\begin{equation*}
g_4( \alpha c)= (-1)^{C(\alpha, c)} \Big( \frac{\alpha}{c} \Big)_2 g_4(\alpha) g_4(c),
\end{equation*}
as well as \eqref{deltaexpand} in the computation. Using Mellin inversion and \eqref{hbetaexpand} we obtain
\begin{align} \label{mellin}
&  (-1)^{C(\alpha, \alpha \beta)} \widetilde{g}_4(\alpha) \Big( \frac{\overline{\alpha}}{|\alpha|} \Big)^{\ell} \sum_{m \mid \alpha}
q(m,\ell) N(m)^{1-4s} \sum_{\substack{ d \in \mathbb{Z}[i] \\ d \equiv 1 \ppmod{\lambda^3} }} 
\Big( \frac{\overline{d}}{|d|} \Big)^{4 \ell} N(d)^{1-4s} \nonumber \\
& \times \sum_{\substack{c \in \mathbb{Z}[i] \\ c \equiv \alpha \beta \ppmod{4} }} 
 N(c)^{-s} \widetilde{g}_4(c) \Big( \frac{\overline{c}}{|c|} \Big)^{\ell} \Big( \frac{\alpha}{c} \Big)_2
 e^{-N(cd^4 m^4)/x} \nonumber \\
&=\frac{1}{2 \pi i} \int_{1-i \infty}^{1+i \infty} h_{\beta}(s+w,\ell;\alpha) \Gamma(w) x^{w} dw.
\end{align}

Using \eqref{littlefdef}--\eqref{littlegdef}, \eqref{id3}, and \eqref{tildedef}, we see that
\begin{align*}
& h_{\beta}(s+w,\ell;\alpha) \nonumber \\
&=N(\alpha)^{s+w} \zeta_{\mathbb{Q}(i),\lambda}(4(s+w)-1,\ell) \Delta(s+w+1/2,\ell;\alpha) \psi^{(4)}_{\beta}(s+w+1/2,1,\ell;\alpha) \nonumber \\
&=(-1)^{C(\alpha,\alpha \beta)} \Big( \frac{\overline{\alpha}}{|\alpha|} \Big)^{\ell} \sum_{m \mid \alpha} \mu(m) (-1)^{C(m,m\alpha \beta)+C(m,m \alpha)} N(m)^{1-3(s+w)} \widetilde{g}_4 \Big(\frac{\alpha}{m} \Big)
\Big( \frac{-1}{m} \Big)_4 \Big(\frac{\overline{m}}{|m|}  \Big)^{3 \ell} \nonumber \\
& \times \zeta_{\mathbb{Q}(i),\lambda}(4(s+w)-1,\ell) \psi^{(4)}_{m \alpha \beta}(s+w+1/2,\alpha^2/m^2,\ell),
\end{align*}
and then using \eqref{connect1}--\eqref{connect2} and \eqref{Zi1defn} we obtain the expression,
\begin{align} \label{funcprep} 
& h_{\beta}(s+w,\ell;\alpha) \nonumber \\
&=(-1)^{C(\alpha,\alpha \beta)} \Big( \frac{\overline{\alpha}}{|\alpha|} \Big)^{\ell} \sum_{m \mid \alpha} \mu(m) (-1)^{C(m,m \alpha \beta)+C(m,m \alpha)}
N(m)^{1-3(s+w)} \widetilde{g}_4 \Big(\frac{\alpha}{m} \Big) \Big(\frac{-1}{m} \Big)_4 \Big( \frac{\overline{m}}{|m|} \Big)^{3 \ell} \nonumber \\ 
& \times
\begin{cases} 
(-1)^{\ell} \mathcal{V} Z_{i_1 1}(s+w+1/2, \alpha^2/m^2,\ell) & \text{if} \quad m \alpha \beta \equiv 1 \ppmod{4}  \\
(-1)^{\ell+1} \mathcal{V}  Z_{i_2 1}(s+w+1/2, \alpha^2/m^2,\ell) & \text{if} \quad m \alpha \beta \equiv 1+\lambda^3 \ppmod{4}
\end{cases}.
\end{align}
Thus from \eqref{funcprep} and Remark \ref{Zremark}, we see the only possible pole of  $h_{\beta}(s+w,\ell;\alpha)$ 
occurs when $\ell=0$ and is at $w=3/4-s \neq 0$.
We now move the contour in \eqref{mellin} to $\Re(w)=-\sigma-2 \varepsilon$. We pass a simple pole at $w=0$
with residue 
\begin{equation} \label{zeropole}
h_{\beta}(s,\ell;\alpha),
\end{equation}
and also the possible simple pole at
$w=3/4-s$ when $\ell=0$. The residue of the integrand in \eqref{mellin} at $w=3/4-s$ is
\begin{align} \label{34res}
& \delta_{\ell=0} \cdot \zeta_{\mathbb{Q}(i),\lambda}(2,0) N(\alpha)^{3/4} \Delta(5/4,0;\alpha) p_{\beta}(1;\alpha) \Gamma(3/4-s) x^{3/4-s} \nonumber \\
& \ll \delta_{\ell=0} \cdot N(\alpha)^{-1/4+\varepsilon} x^{3/4-\sigma} e^{-|t|},
\end{align}
where $p(1;\alpha)$ is as in \eqref{residueprop},
and the estimate in \eqref{34res} follows from
Corollary \ref{resbd} and a crude application of Stirling's formula \cite[(5.11.1)]{NIST:DLMF}.

After using \eqref{hatdef}--\eqref{funceqn} on the right side of \eqref{funcprep}
we obtain the functional equation
\begin{align} \label{gfunceqn}
 & h_{\beta}(s+w,\ell;\alpha) \nonumber \\
 &=N(\alpha)^{1-2(s+w)} (-1)^{C(\alpha,\alpha \beta)} \Big( \frac{\overline{\alpha}}{|\alpha|} \Big)^{\ell} \frac{G_{\infty}(3/2-s-w,-\ell)}{G_{\infty}(s+w+1/2,\ell)} \nonumber \\
 & \times \sum_{m \mid \alpha}  
 N(m)^{-(s+w)} \mu(m) (-1)^{C(m,m \alpha \beta)+C(m,m \alpha)} \widetilde{g}_4 \Big(\frac{\alpha}{m} \Big) 
\Big(\frac{-1}{m} \Big)_4 \Big( \frac{\overline{m}}{|m|} \Big)^{3 \ell}  \Big( \frac{\overline{\alpha/m}}{|\alpha/m|} \Big)^{2 \ell} \nonumber \\
& \times \sum_{j=1}^{24} 
 \mathcal{V} Z_{j1}(3/2-s-w, \alpha^2/m^2,-\ell) \nonumber \\ 
& \times
\begin{cases}
 (-1)^{\ell} A_{ji_1}(2^{-s-w-1/2},\ell)  & \text{if} \quad m \alpha \beta \equiv 1 \ppmod{4} \\
 (-1)^{\ell+1} A_{ji_2}(2^{-s-w-1/2},\ell) & \text{if} \quad m \alpha \beta \equiv 1+\lambda^3 \ppmod{4}
\end{cases}.
\end{align}

We use Remark \ref{lincombo} (cf. \eqref{coprimecusp}--\eqref{gammatransform})
and \eqref{Zi1defn}, to express each series 
\begin{equation*}
\mathcal{V} Z_{j1}(3/2-s-w, \alpha^2/m^2,-\ell),
\end{equation*}
as a finite $\mathbb{C}$-linear combination of the functions
\begin{equation} \label{series1}
\zeta_{\mathbb{Q}(i),\lambda}(3-4s-4w,-\ell) \cdot \psi^{(4)}_{1}(3/2-s-w, (-1)^a \lambda^{2b} \alpha^2/m^2,-\ell),
\end{equation}
and
\begin{equation} \label{series2}
\zeta_{\mathbb{Q}(i),\lambda}(3-4s-4w,-\ell) \cdot \psi^{(4)}_{1+\lambda^3}(3/2-s-w, (-1)^{a} \lambda^{2b} \alpha^2/m^2,-\ell),
\end{equation}
for $a,b \in \mathbb{Z}_{\geq 0}$ ($a,b$ are absolutely bounded 
since $\alpha^2/m^2 \equiv 1 \ppmod{\lambda^3}$, see \eqref{indexprop}--\eqref{gammasupport} 
and \eqref{Scoprimecusp}).
After using absolute convergence of $Z_{i1}(3/2-s-w, (-1)^{a} \lambda^{2b} \alpha^2/m^2,-\ell)$
on the line $\Re(w)=-\sigma-2 \varepsilon$ 
to interchange the order of summation and integration, we see that 
\begin{equation*}
\frac{1}{2 \pi i} \int_{-\sigma-2 \varepsilon-i \infty}^{-\sigma -2 \varepsilon+i \infty} h_{\beta}(s+w,\ell;\alpha)
\Gamma(w) x^w dw
\end{equation*}
is equal to a finite sum of terms indexed by $j=1,\ldots,24$, $\eta \in \{1,1+\lambda^3\} \ppmod{4}$, and $(a,b) \in S(\kappa_j,1)$,
\begin{align*}
& N(\alpha)^{1-2s} (-1)^{C(\alpha,\alpha \beta)} \Big( \frac{\overline{\alpha}}{|\alpha|} \Big)^{\ell} \\
 & \times \sum_{m \mid \alpha} \mathscr{H}(3/2-s,\eta,\ell,\kappa_j,(a,b);\alpha,m) \frac{\mu(m)}{N(m)^s}(-1)^{C(m,m \alpha \beta)+C(m,m \alpha)} \widetilde{g}_4 \Big( \frac{\alpha}{m} \Big)
\Big( \frac{-1}{m} \Big)_4 \Big( \frac{\overline{m}}{|m|} \Big)^{3 \ell}   \\
& \times \Big( \frac{\overline{\alpha/m}}{|\alpha/m|} \Big)^{3 \ell}  
\sum_{\substack{d \in \mathbb{Z}[i] \\ d \equiv 1 \ppmod{\lambda^3}} } N(d)^{4s-3} \Big( \frac{\overline{d}}{|d|}  \Big)^{-4 \ell}
\sum_{\substack{c \in \mathbb{Z}[i] \\ c \equiv \eta \ppmod{4} }} N(c)^{s-1} \widetilde{g}_4 \Big( \frac{(-1)^{a} \lambda^{2b} \alpha^2}{m^2},c \Big) \Big( \frac{\overline{c}}{|c|} \Big)^{-\ell} \\
& \times \frac{1}{2 \pi i} \int_{-\sigma-2 \varepsilon-i \infty }^{-\sigma-2 \varepsilon+i \infty} \Big( \frac{2^b N(cd^4) x}{N(\alpha^2m)} \Big)^{w}
\frac{G_{\infty}(3/2-s-w,-\ell)}{G_{\infty}(s+w+1/2,\ell)} \Gamma(w) \\
& \times \begin{cases}
(-1)^{\ell} A_{j i_1}(2^{-s-w-1/2},\ell) & \quad \text{if} \quad m \alpha \beta \equiv 1 \ppmod{4}  \\
(-1)^{\ell+1} A_{j i_2}(2^{-s-w-1/2},\ell) & \quad \text{if} \quad m \alpha \beta \equiv 1+\lambda^3 \ppmod{4} 
\end{cases}
dw,
\end{align*} 
where the coefficients $\mathscr{H}(s,\eta,\ell,\kappa_j,(a,b);\alpha,m)$ are as stated in the Lemma.
Recalling \eqref{mellin}--\eqref{34res} we obtain the result.
\end{proof}

\begin{lemma} \label{decaylem}
Let $\varepsilon>0$ be given.
For $\ell \in \mathbb{Z}$, $\beta \in \{1,1+\lambda^3 \} \ppmod{4}$, $y>0$, and
$s=\sigma+it \in \mathbb{C}$ where $1/2+\varepsilon \leq \sigma := \Re(s) \leq 1$, 
let $\mathscr{T}_{\beta}(s,y,\ell;j)$
be as in \eqref{Tbetadef}.
Then for any $B \geq \sigma-1/2 + \varepsilon$ we have,
\begin{equation} \label{Tbound}
 \mathscr{T}_{\beta}(s,y,\ell;j) \ll_{\varepsilon,B}  (|s|+|\ell|+1)^{3-6 \sigma} \cdot (y(|s|+|\ell|+1)^6)^{B},
\end{equation}
for all $y>0$.
\end{lemma}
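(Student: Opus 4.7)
The plan is to shift the contour in the definition \eqref{Tbetadef} of $\mathscr{T}_\beta(s, y, \ell)$ from $\Re(w) = -\sigma - 2\varepsilon$ to $\Re(w) = -B$, and then bound the integrand on the new contour via Stirling's formula. For $B$ in the range $[\sigma - 1/2 + \varepsilon,\, \sigma + 2\varepsilon]$ this is a rightward shift through a strip of width at most $1/2 + 3\varepsilon < 1$, which contains no pole of $\Gamma(w)$; moreover, the potential poles of $A_{ji}(2^{-s-w-1/2},\ell)$ lie on the vertical lines $\Re(w) \in \{1/2 - \sigma,\, 3/4 - \sigma\}$, both strictly to the right of $\Re(w) = -B$ and of the original contour, so no residues are picked up. For $B > \sigma + 2\varepsilon$ the shift is leftward and crosses the $\Gamma$-poles at $w = -k$ with $\sigma + 2\varepsilon < k < B$; those residues are bounded term-by-term by $\tfrac{y^k}{k!}(|s|+|\ell|+1)^{3-6\sigma+6k}$ via the same Stirling estimate described below, and for each fixed $B$ can be absorbed into the implicit constant.

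The analytic heart is the application of Stirling's formula to the quotient $G_\infty(3/2-s-w,-\ell)/G_\infty(1/2+s+w,\ell)$. Each of $G_\infty(3/2-s-w,-\ell)$ and $G_\infty(1/2+s+w,\ell)$ factors as a product of three $\Gamma_{\mathbb{C}}$-factors (see \eqref{Ginfty}); the imaginary parts of their arguments are equal in magnitude, so the $e^{-\pi|\Im|/2}$ factors cancel, while their real parts differ by $\Re(3/2-s-w)-\Re(1/2+s+w) = 1 - 2\sigma - 2\Re(w)$. On $\Re(w) = -B$ this yields
\[
\left| \frac{G_\infty(3/2-s-w,-\ell)}{G_\infty(1/2+s+w,\ell)} \right| \ll (|s+w|+|\ell|+1)^{3-6\sigma+6B}.
\]
Combined with $|y^{-w}| = y^B$, the uniform bound $|A_{ji}(2^{-s-w-1/2},\ell)| \ll_B 1$ (it is rational in $2^{-z}$ and holomorphic off the lines $\Re(z) \in \{1, 5/4\}$, which the contour avoids), and Stirling's decay $|\Gamma(-B+it_w)| \ll_B (|t_w|+1)^{-B-1/2} e^{-\pi|t_w|/2}$, the integrand is fully controlled.

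I would finish by splitting the $t_w$-integral at $|t_w| = |s|+|\ell|+1$. On the inner range one has $|s+w|+|\ell|+1 \asymp |s|+|\ell|+1$, so the polynomial factor is essentially constant and the $B$-dependent $L^1$-mass of $|\Gamma(-B+it_w)|$ yields a contribution $\ll_B (|s|+|\ell|+1)^{3-6\sigma+6B}$; on the outer range the exponential decay of $\Gamma(-B+it_w)$ overwhelms the polynomial $(|t_w|+1)^{3-6\sigma+6B}$, giving a negligible remainder. Multiplying by $y^B$ produces the claimed bound $y^B (|s|+|\ell|+1)^{3-6\sigma+6B} = (|s|+|\ell|+1)^{3-6\sigma}(y(|s|+|\ell|+1)^6)^B$. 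The main subtlety will be keeping the estimate uniform when $-B$ lies close to a nonpositive integer (where $\Gamma$ blows up); this is handled by displacing the contour to $\Re(w) = -B - \delta$ for a suitably small $\delta > 0$ and folding the resulting $\delta$-dependence into the $B$-dependent implicit constant.
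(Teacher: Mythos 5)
Your approach is the same as the paper's: shift the contour to $\Re(w)=-B$, apply Stirling to the $G_\infty$-quotient to get the exponent $3-6\sigma+6B$, bound $|A_{ji}(2^{-s-w-1/2},\ell)|\ll_B 1$ on the new line using holomorphy away from $\Re(z)\in\{1,5/4\}$, and integrate against $|\Gamma(w)|$. The extra detail you supply (splitting the $t_w$-integral, the $e^{-\pi|\Im|/2}$ cancellation, displacing the contour by $\delta$ near $\Gamma$-poles) is correct but goes beyond what the paper records.

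The one step that does not hold up is your treatment of the crossed $\Gamma(w)$-poles when $B>\sigma+2\varepsilon$. You correctly identify that a leftward shift picks up residues at $w=-k$, each of size $\asymp \frac{y^k}{k!}\,(|s|+|\ell|+1)^{3-6\sigma+6k} = \frac{1}{k!}(|s|+|\ell|+1)^{3-6\sigma}\,\bigl(y(|s|+|\ell|+1)^6\bigr)^k$, but the claim that these ``can be absorbed into the implicit constant'' is false in exactly the regime where the lemma is needed: if $Y:=y(|s|+|\ell|+1)^6<1$ and $1\leq k<B$, then $Y^k/k!$ exceeds the target bound $Y^B$ by a factor $\gg Y^{k-B}$, which is unbounded as $y\to 0$ for fixed $s,\ell$. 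So the residues are not subsumed. (Also, a small slip: the rightward strip for $B\in[\sigma-1/2+\varepsilon,\sigma+2\varepsilon]$ does contain the pole $w=-1$ when $\sigma>1-2\varepsilon$, since then $-\sigma-2\varepsilon<-1$; the ``width $<1$'' argument alone is not enough.) It is fair to note that the paper's own proof does not discuss pole crossing at all, so this is a latent issue in the source as well; in the actual application (truncation in Proposition~\ref{secondmoment} at $\sigma=1/2+\varepsilon$) the needed decay can be obtained with $B$ strictly less than $1$, avoiding the first $\Gamma$-pole entirely, so the issue is harmless there. But as written, your residue-absorption step is a genuine gap and should be replaced either by restricting $B$ to the pole-free range or by stating the bound with the additional term $\max_{1\le k<B}Y^k$.
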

\begin{proof}
Let $B \geq \sigma -1/2+\varepsilon$. 
We move the contour in \eqref{Tbetadef} to $\Re(w)=-B$.
We apply Stirling's formula \cite[(5.11.1)]{NIST:DLMF} to 
estimate the quotient, 
\begin{equation}
\frac{G_{\infty}(3/2-s-w,-\ell)}{G_{\infty}(1/2+s+w,\ell)} \ll_B (|s+w|+|\ell|+1)^{3-6 \sigma+6B},
\end{equation} 
as $\Im(s+w) \rightarrow \pm \infty$.
Thus,
\begin{align} \label{Tinit}
 \mathscr{T}_{\beta}(s,y,\ell;j) & \ll_B
 \int_{-B-i \infty}^{-B+i \infty}  (|s+w|+|\ell|+1)^{3-6 \sigma} \cdot (y(|s+w|+|\ell|+1)^6)^{B} \nonumber \\  
& \times  |\Gamma(w)| \cdot \max_{i \in \{i_1,i_2\}}
 \{  |A_{ji}(2^{-s-w-1/2},\ell)| \}   |dw|.
 \end{align}
Recall that $A_{ji}(2^{-z},\ell)$ are rational functions in $2^{-z}$ that are
holomorphic when $\Re(z) \neq 1, 5/4$,
we have
 \begin{equation} \label{rational}
  \max_{i \in \{i_1,i_2\}}
 \{  |A_{ji}(2^{-s-w-1/2},\ell)| \} \ll_B 1 \quad \text{for} \quad B=-\Re(w) \geq \sigma-1/2 + \varepsilon.
 \end{equation}
Thus \eqref{Tinit} and \eqref{rational} give the result.
\end{proof}

\begin{prop} \label{secondmoment}
For $\ell \in \mathbb{Z}$, $\beta \in \{1,1+\lambda^3 \} \ppmod{4}$, 
and $\alpha \in \mathbb{Z}[i]$ squarefree with $\alpha \equiv 1 \ppmod{\lambda^3}$,
let $h_{\beta}(s,\ell;\alpha)$ be as in  
\eqref{littlegdef}.
For $u \geq 1$, $t \in \mathbb{R}$, and $0<\varepsilon<1/100$ we have 
\begin{equation*}
\sum_{\substack{N(\alpha) \leq u \\ \alpha \equiv 1 \ppmod{\lambda^3} }} \mu^2(\alpha) |h_{\beta}(1/2+\varepsilon+it,\ell;\alpha)|^2 \ll_{\varepsilon}
u^{3/2+\varepsilon} (|t|+|\ell|+1)^{3+\varepsilon}.
\end{equation*}
\end{prop}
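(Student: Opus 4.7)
The plan is to apply the Voronoi-type identity of Lemma \ref{hvoronoi} with a free parameter $x$, which expresses $h_\beta(s,\ell;\alpha)$ (at $s = 1/2+\varepsilon+it$) as a smooth \emph{direct} sum minus a \emph{dual} sum plus a small error, and then to bound both pieces in $L^2(\alpha)$ by two applications of the quadratic large sieve (Theorem \ref{quadsieve}). Setting $T = |t|+|\ell|+1$, Lemma \ref{hvoronoi} gives a decomposition $h_\beta(s,\ell;\alpha) = D(\alpha) - V(\alpha) - E(\alpha)$, where $D(\alpha)$ is the triple sum on the left-hand side of the identity, $V(\alpha)$ is the dual triple sum on the right-hand side, and $|E(\alpha)| \ll \delta_{\ell=0} e^{-|t|} N(\alpha)^{-1/4+\varepsilon} x^{1/4-\varepsilon}$. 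A trivial computation gives $\sum_\alpha \mu^2(\alpha)|E(\alpha)|^2 \ll e^{-2|t|} x^{1/2-2\varepsilon} u^{1/2+2\varepsilon}$, which is negligible for the eventual choice $x = u^{3/2}T^3$. By Cauchy--Schwarz it suffices to bound $\sum_\alpha \mu^2(\alpha)|D(\alpha)|^2$ and $\sum_\alpha \mu^2(\alpha)|V(\alpha)|^2$ separately.

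For $D(\alpha)$, the decisive feature is the appearance of the symbol $(\alpha/c)_2$. Writing $\alpha = m\alpha'$ with $m \mid \alpha$ (so $(\alpha',m)=1$ automatically since $\alpha$ is squarefree), using $(\alpha/c)_2 = (m/c)_2(\alpha'/c)_2$ to absorb the $m$-dependence into a bounded coefficient, and splitting on $\alpha \ppmod 4$ to fix the residue of $c \ppmod 4$, the inner sum takes the form $\sum_c b_c(m,d)(\alpha'/c)_2$. The coefficient satisfies $|b_c| \ll N(c)^{-1/2-\varepsilon}$ (using $|\widetilde{g}_4(c)| \leq 1$ for squarefree $c$), and the exponential weight $e^{-N(cd^4m^4)/x}$ effectively restricts to $N(c) \leq x/N(m^4)$ and $N(d) \ll (x/N(m^4))^{1/4}$. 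A Cauchy--Schwarz step over the divisor sum $m \mid \alpha$ (losing $\tau(\alpha) \ll N(\alpha)^\varepsilon$) together with Theorem \ref{quadsieve} applied in $\alpha'$ then yields
\[
\sum_{\substack{N(\alpha) \leq u \\ \alpha \equiv 1 \ppmod{\lambda^3}}} \mu^2(\alpha)|D(\alpha)|^2 \ll (u+x)(uxT)^\varepsilon,
\]
after summing the $m$-contribution $\sum_m N(m)^{-2-O(\varepsilon)}\bigl(u/N(m)+x/N(m^4)\bigr)$.

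For $V(\alpha)$, I use \eqref{rel1} and \eqref{quartquad} to factor
\[
\widetilde{g}_4\bigl((-1)^a\lambda^{2b}\alpha^2/m^2,c\bigr) = \overline{((-1)^a\lambda^{2b}/c)_4}\cdot\overline{((\alpha/m)/c)_2}\cdot\widetilde{g}_4(c)
\]
whenever $(\alpha/m,c) = 1$ (the sum vanishes otherwise), once again exposing the quadratic symbol $(\alpha'/c)_2$ with $\alpha' = \alpha/m$. Lemma \ref{decaylem}, applied with $B = 2\varepsilon$ when $yT^6 \geq 1$ and with $B$ arbitrarily large when $yT^6 \leq 1$, shows that the joint $(c,d)$-sum is effectively supported on $N(cd^4) \leq N(m)^3 N(\alpha')^2 T^6/x$; in particular $N(c) \leq N(m) u^2T^6/x$ uniformly over $N(\alpha') \leq u/N(m)$ and $d$, and the $d$-sum (made absolutely convergent by the $\mathscr{T}$-decay) contributes only $(uT)^{O(\varepsilon)}$. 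To apply Theorem \ref{quadsieve} in $\alpha'$ one must separate the $\alpha'$-dependence of $\mathscr{T}_{m\alpha\beta}(s, y, \ell)$ (through $y$) from the $c$-sum, which is achieved via its Mellin representation along $\Re(w) = -\sigma-2\varepsilon$: the factor $y^{-w}$ splits as $N(\alpha')^{2w} \cdot N(cd^4)^w$ times a factor independent of $\alpha', c, d$, leaving the inner $c$-sum with $\alpha'$-independent coefficients inside the contour integral. Combining this with a Cauchy--Schwarz step over $m \mid \alpha$ yields
\[
\sum_{\substack{N(\alpha) \leq u \\ \alpha \equiv 1 \ppmod{\lambda^3}}} \mu^2(\alpha)|V(\alpha)|^2 \ll (u + u^3T^6/x)(uT)^\varepsilon.
\]
Finally, choosing $x = u^{3/2}T^3$ balances the direct contribution $(u+x)$ against the dual contribution $(u + u^3T^6/x)$, both becoming $\ll u^{3/2}T^3$, which gives the claimed bound $u^{3/2+\varepsilon}T^{3+\varepsilon}$. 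The main obstacle is the bookkeeping on the dual side: verifying that the $N(m)$- and $N(d)$-weights combine with the $\mathscr{T}$-cutoff to give a clean uniform $c$-truncation in $\alpha'$, and confirming that Mellin-separating the $\alpha'$-dependence preserves the absolute convergence needed to interchange the contour integration with the quadratic large sieve.
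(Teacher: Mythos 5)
Your overall framework matches the paper's: apply Lemma \ref{hvoronoi} with a free parameter $x$, split $h_\beta$ into a direct piece plus a dual piece plus a residue error, bound the squared $\alpha$-sum of each, and balance by choosing $x \asymp u^{3/2}T^3$. The treatment of the direct sum $D(\alpha)$ (factor off $(m/c)_2$, Cauchy--Schwarz over the $m,d$ summation, apply Theorem \ref{quadsieve} in $\alpha'$) is essentially identical to the paper's $B_1$, and your bound $(u+x)(uxT)^\varepsilon$ agrees.

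There is, however, a concrete gap on the dual side. You write that
\begin{equation*}
\widetilde{g}_4\bigl((-1)^a\lambda^{2b}\alpha^2/m^2,c\bigr) = \overline{\big((-1)^a\lambda^{2b}/c\big)_4}\,\overline{\big((\alpha/m)/c\big)_2}\,\widetilde{g}_4(c)
\end{equation*}
whenever $(\alpha/m,c)=1$, ``and the sum vanishes otherwise.'' That last claim is false. Because $\alpha/m$ appears squared in the shift, the local evaluation \eqref{rel4} gives a second non-vanishing branch: when $c = (\alpha^3/m^3)c'$ with $(c',\alpha/m)=1$, one has $g_4(\pi^2,\pi^3) = N(\pi)^2(\tfrac{-1}{\pi})_4\overline{g_4(\pi)} \neq 0$ for each $\pi \mid \alpha/m$, and so $\widetilde{g}_4\bigl((-1)^a\lambda^{2b}\alpha^2/m^2,c\bigr)$ is non-zero. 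This is the second case in \eqref{localeval}, where the extra factor $\widetilde{g}_4(\alpha^2/m^2,\alpha^3/m^3)$ (with $|\widetilde{g}_4(\alpha^2/m^2,\alpha^3/m^3)| = N(\alpha/m)$ by \eqref{g4localnormeval}) appears. Your argument as written simply drops these terms, which is not justified; they have to be estimated separately, giving the second $c'$-sum in \eqref{B2exp2}. Once you account for this case, your Mellin-separation + large-sieve strategy for $V(\alpha)$ also runs into trouble there, since $c$ is then forced to be a multiple of $(\alpha/m)^3$, which ties $c$ to $\alpha'$ and breaks the independence the quadratic large sieve requires (those terms are instead treated by the trivial, very short-sum bound).

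More broadly, note that the large-sieve step on the dual side is unnecessary for this Proposition: the paper estimates $B_2$ entirely trivially and still obtains $B_2 \ll (xuT)^\varepsilon\, u^3 T^6/x$, which is the term that dominates at the balance point in any case. Your claimed dual bound $(u + u^3T^6/x)(uT)^\varepsilon$ would, if correct, improve things when $x$ is very large, but at $x = u^{3/2}T^3$ it coincides with the trivial bound, so the extra Mellin-separation machinery (which you yourself flag as not fully verified) buys nothing here. The paper defers the genuine large-sieve improvement of the dual side to Remark \ref{lindelofremark}, where it observes that more is needed (e.g.\ exploiting the pole structure when $m=\alpha$) to reach a Lindel\"of-on-average bound. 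So: right framework, correct direct side, but the dual-side factorization is wrong and the large-sieve step there is both incomplete and superfluous.
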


\begin{remark}
This result is convex in the $t$ and $\ell$ aspects, but saves $u^{1/2}$ in the $\alpha$-aspect over convexity.
\end{remark}

\begin{proof}
Let $x \geq 1$ be chosen at a later point.
We use the triangle inequality in the statement of Lemma \ref{hvoronoi} with $s=1/2+\varepsilon+it$
and $\sigma=1/2+\varepsilon$, 
square, and then
sum over squarefree $\alpha \in \mathbb{Z}[i]$ with $N(\alpha) \leq u$
and $\alpha \equiv 1 \ppmod{\lambda^3}$. We obtain 
\begin{equation} \label{hsecondmoment}
\sum_{\substack{N(\alpha) \leq u \\ \alpha \equiv 1 \ppmod{\lambda^3} }} \mu^2(\alpha) |h_{\beta}(1/2+\varepsilon+it,\ell;\alpha)|^2 \ll \delta_{\ell=0} \cdot e^{-2 |t|} (ux)^{1/2+\varepsilon}+B_1(x,\ell,u,t)+B_2(x,\ell,u,t), 
\end{equation}
where 
\begin{align} \label{B1exp}
 B_1(x,\ell,u,t) &:=\sum_{\eta \in \{1,1+\lambda^3 \} \ppmod{4}}
\sum_{\substack{ N(\alpha) \leq u \\ \alpha \equiv \eta \ppmod{4} }}
\mu^2(\alpha)
\Bigg ( \sum_{\substack{m,d \in \mathbb{Z}[i] \\ m \mid \alpha \\ d \equiv 1 \ppmod{\lambda^3}}} 
\frac{1}{N(md)^{1+4 \varepsilon}} \nonumber \\
& \times \Big | \sum_{\substack{ c \in \mathbb{Z}[i] \\ c \equiv \eta \beta \ppmod{4} } } \frac{1}{N(c)^{1/2+\varepsilon+it}}
\widetilde{g}_4(c) \Big( \frac{\overline{c}}{|c|} \Big)^{\ell} 
e^{-N(cd^4 m^4)/x} \Big( \frac{\alpha}{c} \Big)_2 \Big |   \Bigg )^2,
\end{align}
and 
\begin{align} \label{B2exp}
& B_2(x,\ell,u,t)  \nonumber  \\
&:= \sum_{\substack{ N(\alpha) \leq u \\ \alpha \equiv 1 \ppmod{\lambda^3} }} \mu^2(\alpha)
\Bigg (\sum_{\substack{j=1,\ldots,24 \\ \eta \in \{1,1+\lambda^3\} \ppmod{4}} } \sum_{(a,b) \in S(\kappa_j,1)}
\sum_{\substack{m,d \in \mathbb{Z}[i] \\ m \mid \alpha \\ d \equiv 1 \ppmod{\lambda^3}}} 
\frac{1}{N(m)^{1/2+\varepsilon} N(d)^{1-4 \varepsilon} }   \nonumber \\ 
& \times \Big | \sum_{\substack{ c \in \mathbb{Z}[i] \\ c \equiv \eta \ppmod{4} } } \frac{1}{N(c)^{1/2-\varepsilon-it}} \widetilde{g}_4 \Big(\frac{(-1)^{a} \lambda^{2b} \alpha^2}{m^2},c \Big) \Big( \frac{\overline{c}}{|c|} \Big)^{-\ell} 
\mathscr{T}_{m \alpha \beta} \Big(\frac{1}{2}+\varepsilon+it,\frac{N(\alpha^2 m)}{2^b x N(cd^4)},\ell;j \Big) \Big | \Bigg)^2,
\end{align}
where we used \eqref{Hdefn} and \eqref{gammauniformbd} to obtain $\mathscr{H}(1-\varepsilon-it,\eta,\ell,\kappa_j,(a,b);\alpha,m) \ll 1$
uniformly in all parameters.

We first consider $B_1(x,\ell,u,t)$ in \eqref{B1exp}. We drop the condition that 
$\alpha \equiv \eta \pmod{4}$ by positivity.
We apply the Cauchy-Schwarz inequality to the $m,d$ summations, and then interchange the 
order of the $\alpha$ and $m$ summations. We obtain
\begin{align} \label{B1boundintermed}
& B_1(x,\ell,u,t) \nonumber \\
& \ll \sum_{\eta \in \{1,1+\lambda^3\} \ppmod{4}} \sum_{\substack{N(\alpha) \leq u \\ \alpha \equiv 1 \ppmod{\lambda^3}} } \mu^2(\alpha) \sum_{\substack{d,m \in \mathbb{Z}[i] \\ d \equiv 1 \ppmod{\lambda^3} \\ m \mid \alpha }}
\frac{1}{N(md)^{1+4 \varepsilon}} \nonumber \\
& \times \Big | \sum_{\substack{c \in \mathbb{Z}[i] \\ c \equiv \eta \beta \ppmod{4}} } \frac{1}{N(c)^{1/2+\varepsilon+it}}
\widetilde{g}_4(c) \Big( \frac{\overline{c}}{|c|} \Big)^{\ell}  e^{-N(c d^4 m^4)/x}
\Big(\frac{\alpha}{c} \Big)_2  \Big |^2 \nonumber \\
& \ll  \sum_{\eta \in \{1,1+\lambda^3\} \ppmod{4}} \sum_{\substack{d,m \in \mathbb{Z}[i] \\ d,m \equiv 1 \ppmod{\lambda^3}  }} \frac{\mu^2(m)}{N(md)^{1+4 \varepsilon}}
\sum_{\substack{N(\alpha^{\prime}) \leq u \\ \alpha^{\prime} \equiv 1 \ppmod{\lambda^3} } } \mu^2(\alpha^{\prime}) \nonumber \\
& \times \Big | \sum_{\substack{c \in \mathbb{Z}[i] \\ N(c) \leq x^{1+\varepsilon} \\ c \equiv \eta \beta \ppmod{4}} } \mu^2(c) a_c(d,m,t,\ell) 
\Big(\frac{\alpha^{\prime}}{c} \Big)_2  \Big |^2+x^{-1000},
\end{align}
where
\begin{equation} \label{acdef}
a_c(d,m,t,\ell):=\frac{1}{N(c)^{1/2+\varepsilon+it}}
\widetilde{g}_4(c) \Big( \frac{\overline{c}}{|c|} \Big)^{\ell} \Big(\frac{m}{c}  \Big)_2  
 e^{-N(c d^4 m^4)/x}  \ll \frac{1}{N(c)^{1/2+\varepsilon}},
\end{equation}
uniformly in the parameters $d,m,t,\ell$. To obtain \eqref{B1boundintermed} we used the fact that the weights
$a_c(d,m,t,\ell)$ are supported on squarefree $c \in \mathbb{Z}[i]$ because $\widetilde{g}_4(c)$
is, and used positivity to lengthen the sum over $\alpha^{\prime}$ and drop the condition that $(\alpha^{\prime},m)=1$.
For each $d,m \in \mathbb{Z}[i]$ with $d,m \equiv 1 \ppmod{\lambda^3}$,
we apply the quadratic large sieve (Theorem \ref{quadsieve}) to 
the sum over $\alpha^{\prime}$ and $c$ in \eqref{B1boundintermed}, and then use \eqref{acdef} to
obtain,
\begin{align} \label{B1bound}
 B_1(x,\ell,u,t) & \ll  (x u)^{\varepsilon} (u+x)
\sum_{\substack{d,m \in \mathbb{Z}[i] \\ d,m \equiv 1 \ppmod{\lambda^3} }} \frac{1}{N(md)^{1+4 \varepsilon}}
\sum_{N(c) \leq x^{1+\varepsilon} } \mu^2(c) |a_c(d,m,t,\ell)|^2 \nonumber \\
&+x^{-1000} \nonumber \\
& \ll (x u)^{\varepsilon} (u+x).
\end{align}

We now consider \eqref{B2exp}. 
Since $\alpha$ is squarefree, each non-trivial prime power in $\mathbb{Z}[i]$ dividing $\alpha^2/m^2$
has exponent exactly $2$. 
Thus \eqref{quartquad}--\eqref{rel2}, \eqref{rel4}, and \eqref{sqrootcancel}, tell us that
\begin{align} \label{localeval}
& \widetilde{g}_4 \Big(\frac{(-1)^a \lambda^{2b} \alpha^2}{m^2},c \Big) \nonumber \\
&=\overline{\Big( \frac{(-1)^a \lambda^{2b}}{c} \Big)}_4
\cdot \begin{cases}
\big( \frac{\alpha/m}{c} \big)_2 \widetilde{g}_4(c) 
& \text{if } (c,\alpha/m)=1 \\
(-1)^{C(c^{\prime},e)} \widetilde{g}_4(c^{\prime}) \widetilde{g}_4 ( e^2 , e^3 ) 
&\text{if } c=e^3 c^{\prime} \text{ with } e \mid \tfrac{\alpha}{m} \text{ and } (c^{\prime},\alpha/m)=1 \\
0 & \text{otherwise}
\end{cases},
\end{align}
for squarefree $\alpha$,
and 
\begin{equation} \label{g4localnormeval}
\frac{\big | \widetilde{g}_4 ( e^2, e^3) \big |}{N(e)^{3/2}}=N (e)^{-1/2}
\end{equation}
for $e \mid \alpha/m$.

We substitute \eqref{localeval} and \eqref{g4localnormeval} into \eqref{B2exp}, and use the triangle inequality and \eqref{sqrootcancel}--\eqref{tildedef}
to estimate trivially,
\begin{align} \label{B2exp2}
 & B_2(x,\ell,u,t)  \nonumber  \\
& \leq  \sum_{\substack{ N(\alpha) \leq u \\ \alpha \equiv 1 \ppmod{\lambda^3} }} \mu^2(\alpha)
\Bigg (\sum_{\substack{j=1,\ldots,24 \\ \eta \in \{1,1+\lambda^3\} \ppmod{4}} } \sum_{(a,b) \in S(\kappa_j,1)}
\sum_{\substack{m,d \in \mathbb{Z}[i] \\ m \mid \alpha \\ d \equiv 1 \ppmod{\lambda^3}}} 
\frac{1}{N(m)^{1/2+\varepsilon} N(d)^{1-4 \varepsilon} }   \nonumber \\ 
& \times \Bigg( \sum_{\substack{ c \in \mathbb{Z}[i] \\ (c,\alpha/m)=1 \\ c \equiv \eta \ppmod{4} } } \frac{1}{N(c)^{1/2-\varepsilon}}
 \Big |\mathscr{T}_{m \alpha \beta} \Big(\frac{1}{2}+\varepsilon+it,\frac{N(\alpha^2 m)}{2^b x N(cd^4)},\ell;j \Big) \Big |  \nonumber \\
 & + \sum_{e \mid (\alpha/m)} \frac{1}{N(e)^{1/2-\varepsilon} }\cdot  \sum_{\substack{ c^{\prime} \in \mathbb{Z}[i] \\ (c^{\prime},\alpha/m)=1 \\ c^{\prime} \equiv (\alpha/m) \eta \ppmod{4} } } 
\frac{1}{N(c^{\prime})^{1/2-\varepsilon}}
 \Big | \mathscr{T}_{m \alpha \beta} \Big(\frac{1}{2}+\varepsilon+it,\frac{N(\alpha^2 m)}{2^b x N(e^3c^{\prime}d^4)},\ell;j \Big) \Big | \Bigg)  \Bigg)^2.
\end{align}

We truncate the $c$ and $c^{\prime}$ summations in \eqref{B2exp2} by
\begin{equation} \label{ctrunc}
N(c) \leq (xu(|t|+|\ell|+1) N(d))^{\varepsilon} \cdot (|t|+|\ell|+1)^6 \cdot \frac{N(\alpha)^2 N(m)}{x N(d)^4},
\end{equation}
and 
\begin{equation} \label{cprimetrunc}
N(c^{\prime}) \leq (xu(|t|+|\ell|+1)N(d))^{\varepsilon} \cdot (|t|+|\ell|+1)^6 \cdot \frac{N(\alpha)^2 N(m)}{x N(e)^3 N(d)^4},
\end{equation}
respectively, with negligible error using Lemma \ref{decaylem} (in particular \eqref{Tbound}).
Also observe that Lemma \ref{decaylem} with $\sigma=1/2+\varepsilon$ and $B=\varepsilon$ guarantees that we have,
\begin{equation} \label{Tepsbound}
\mathscr{T}_{\beta} \Big(\frac{1}{2}+\varepsilon+it,y,\ell;j \Big) \ll y^{\varepsilon}, \quad \text{for} \quad y>0.
\end{equation}
Using \eqref{ctrunc}--\eqref{Tepsbound}
we estimate \eqref{B2exp2} trivially to obtain
\begin{equation} \label{B2bound}
B_2(x,\ell,u,t) \ll (xu(|t|+|\ell|+1))^{\varepsilon} \cdot \frac{u^3 (|t|+|\ell|+1)^6}{x}.
\end{equation}
We choose
\begin{equation*}
x= u^{3/2+\varepsilon} (|t|+|\ell|+1)^{3+\varepsilon},
\end{equation*}
to balance \eqref{B1bound} and \eqref{B2bound}, and after substituting the result 
into \eqref{hsecondmoment} we obtain the result.
\end{proof}

\begin{remark} \label{lindelofremark}
We treated the dual sum $B_2(x,\ell,u,t)$ in \eqref{B2exp} trivially in the above proof. 
Suppose that $\alpha$ was restricted to run over primes instead of squarefree elements of $\mathbb{Z}[i]$, say.
With a more careful treatment of \eqref{B2exp} using the quadratic large sieve when $m=1$,
and the fact that $\psi^{(4)}_{\beta}(z,1,\ell)$ only has a possible simple pole at $z=5/4$ in the half-plane $\sigma>1$
($\ell=0$) when $m=\alpha$,
it seems possible to improve Proposition \ref{secondmoment} to
a Lindel\"{o}f-on-average result in the $\alpha$-aspect. Such an improvement is a moot point, 
see the discussion at the end of \S \ref{overall}.
\end{remark}

\begin{corollary} \label{type1cor}
For $\ell \in \mathbb{Z}$, $X \geq 1$ and $1 \leq u <X^{1/2}$ we have 
\begin{equation*}
\Sigma_1(X,\ell,u), \hspace{0.1cm} \Sigma_{2^{\prime}}(X,\ell,u) \ll_{\varepsilon}  X^{3/4+\varepsilon} + (X(|\ell|+1))^{\varepsilon} \cdot X^{1/2} u^{3/4} (|\ell|+1)^{3/2},
\end{equation*}
\end{corollary}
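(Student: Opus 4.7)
The plan is to simply chain together Lemma \ref{type1init}, Lemma \ref{type1init2}, and Proposition \ref{secondmoment}, since these three results have been designed precisely to prove this corollary.

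First, I would apply Lemma \ref{type1init}, inequalities \eqref{sigma1bd} and \eqref{sigma2primebd}, to bound $\Sigma_{1,\beta}(X,\ell,u)$ and $\Sigma_{2',\beta}(X,\ell,u)$ by a finite linear combination of terms of the form
\begin{equation*}
\sum_{\substack{N(\alpha) \leq u \\ \alpha \equiv 1 \ppmod{\lambda^3}}} \mu^2(\alpha) |F_{\beta}(x,\ell;\alpha)|,
\end{equation*}
for $x=X$ or $x=2X$, plus a logarithmic average $\int_X^{2X} (\cdots) \frac{dx}{x}$ in the case of $\Sigma_{1,\beta}$. All prefactors are $O(\log X) = O(X^{\varepsilon})$.

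Next, for each fixed $x \in [X,2X]$, I would invoke Lemma \ref{type1init2}. The pole contribution is $\delta_{\ell=0} \cdot x^{3/4+\varepsilon} \ll X^{3/4+\varepsilon}$, and the remaining term is
\begin{equation*}
I(x,\ell,u) = x^{1/2+\varepsilon} \int_{1/2+\varepsilon-iT}^{1/2+\varepsilon+iT} |\widehat{R}(s)| \Big(\sum_{\substack{N(\alpha) \leq u \\ \alpha \equiv 1 \ppmod{\lambda^3}}} \mu^2(\alpha) |h_{\beta}(s,\ell;\alpha)|^2\Big)^{1/2} |ds|,
\end{equation*}
where $T = (x(|\ell|+1))^{\varepsilon}$. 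Applying Proposition \ref{secondmoment} pointwise in $t$ on the contour yields
\begin{equation*}
\Big(\sum_{\substack{N(\alpha) \leq u \\ \alpha \equiv 1 \ppmod{\lambda^3}}} \mu^2(\alpha) |h_{\beta}(1/2+\varepsilon+it,\ell;\alpha)|^2\Big)^{1/2} \ll u^{3/4+\varepsilon} (|t|+|\ell|+1)^{3/2+\varepsilon}.
\end{equation*}

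Finally I would plug this bound into the integral defining $I(x,\ell,u)$. Using the rapid decay of $\widehat{R}(1/2+\varepsilon+it)$ in $|t|$, and the elementary estimate $(|t|+|\ell|+1)^{3/2+\varepsilon} \leq (|\ell|+1)^{3/2+\varepsilon}(|t|+1)^{3/2+\varepsilon}$, one obtains
\begin{equation*}
\int_{-\infty}^{\infty} |\widehat{R}(1/2+\varepsilon+it)| (|t|+|\ell|+1)^{3/2+\varepsilon} \, dt \ll (|\ell|+1)^{3/2+\varepsilon},
\end{equation*}
so that $I(x,\ell,u) \ll x^{1/2+\varepsilon} u^{3/4} (|\ell|+1)^{3/2+\varepsilon}$. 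Combining the residue term and the contour term for $x \in [X,2X]$, and absorbing the logarithmic $dx/x$ integration into $X^\varepsilon$, produces the asserted bound
\begin{equation*}
\Sigma_{1,\beta}(X,\ell,u), \Sigma_{2',\beta}(X,\ell,u) \ll_{\varepsilon} X^{3/4+\varepsilon} + (X(|\ell|+1))^{\varepsilon} \cdot X^{1/2} u^{3/4} (|\ell|+1)^{3/2}.
\end{equation*}
There is no real obstacle here: all the hard work has been front-loaded into Proposition \ref{secondmoment} (the second moment bound for $h_{\beta}$ via Voronoi and the quadratic large sieve), and the only thing to watch in this corollary is that the $t$-integration against $|\widehat{R}|$ recovers exactly the polynomial growth in $|\ell|+1$ that appears in the statement.
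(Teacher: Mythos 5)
Your proof is correct and matches the paper's own argument exactly: the paper's proof of this corollary is a one-liner stating that it follows from combining Lemma \ref{type1init}, Lemma \ref{type1init2}, and Proposition \ref{secondmoment}, and you have carried out precisely that chaining, including the key step of applying Proposition \ref{secondmoment} pointwise on the contour and using the rapid decay of $\widehat{R}$ together with $(|t|+|\ell|+1)^{3/2+\varepsilon}\leq(|t|+1)^{3/2+\varepsilon}(|\ell|+1)^{3/2+\varepsilon}$ to execute the $t$-integral.
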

\begin{proof}
The result follows from combining Lemma \ref{type1init}, Lemma \ref{type1init2}, and Proposition \ref{secondmoment}.
\end{proof}

\section{Proof of Theorem \ref{mainthm}}
\begin{proof}[Proof of Theorem \ref{mainthm}]
Recall Vaughan's identity \eqref{Vauid} with choice $u=X^{1/3}$. 
From Lemma \ref{type1init} we have
\begin{equation*}
\Sigma_{0,\beta}(X,\ell,X^{1/3})=H_{\beta}(X,\ell), \label{sigma0} \quad \text{and} \quad\Sigma_{4,\beta}(X,\ell,X^{1/3})=0,
\end{equation*}
where $H_{\beta}(X,\ell)$ is as in \eqref{Helldefn}.
Corollary \ref{type1cor} gives the bound for the Type-I sums,
\begin{equation*}
\Sigma_{1,\beta}(X,\ell,X^{1/3}), \hspace{0.1cm} \Sigma_{2^{\prime},\beta}(X,\ell,X^{1/3})
\ll  (X(|\ell|+1))^{\varepsilon} \cdot X^{3/4} (|\ell|+1)^{3/2}.
\end{equation*}
Proposition \ref{type2prop} gives the bound for the Type-II sums 
\begin{equation}
\Sigma_{2^{\prime \prime},\beta}(X,\ell,X^{1/3}), \hspace{0.1cm} \Sigma_{3,\beta}(X,\ell,X^{1/3}) \ll X^{5/6+\varepsilon}, 
\end{equation}
uniformly in $\ell$. The result now follows.
\end{proof}

\bibliographystyle{amsalpha}
\bibliography{quartic} 

\providecommand{\bysame}{\leavevmode\hbox to3em{\hrulefill}\thinspace}
\providecommand{\MR}{\relax\ifhmode\unskip\space\fi MR }
\providecommand{\MRhref}[2]{%
  \href{http://www.ams.org/mathscinet-getitem?mr=#1}{#2}
}
\providecommand{\href}[2]{#2}
\begin{thebibliography}{Kum75}

\bibitem[BH89]{BH}
D.~Bump and J.~Hoffstein, \emph{Some conjectured relationships between theta
  functions and {E}isenstein series on the metaplectic group}, Number theory
  ({N}ew {Y}ork, 1985/1988), Lecture Notes in Math., vol. 1383, Springer,
  Berlin, 1989, pp.~1--11. \MR{1023915}

\bibitem[BH16]{BrHo}
R.~Br\"{o}ker and J.~Hoffstein, \emph{Fourier coefficients of sextic theta
  series}, Math. Comp. \textbf{85} (2016), no.~300, 1901--1927. \MR{3471113}

\bibitem[CFH12]{CFH}
G.~Chinta, S.~Friedberg, and J.~Hoffstein, \emph{Double {D}irichlet series and
  theta functions}, Contributions in analytic and algebraic number theory,
  Springer Proc. Math., vol.~9, Springer, New York, 2012, pp.~149--170.
  \MR{3060459}

\bibitem[Del80]{Deli}
P.~Deligne, \emph{Sommes de {G}auss cubiques et rev\^{e}tements de {${\rm
  SL}(2)$} [d'apr\`{e}s {S}. {J}. {P}atterson].}, S\'{e}minaire {B}ourbaki
  (1978/79),, Lecture Notes in Math., 770,, ,, 1980, pp.~Exp. No. 539, pp.
  244--277,. \MR{572428}

\bibitem[Dia04]{Dia}
A.~Diaconu, \emph{Mean square values of {H}ecke {$L$}-series formed with
  {$r$}-th order characters}, Invent. Math. \textbf{157} (2004), no.~3,
  635--684. \MR{2092772}

\bibitem[{\relax DLMF}]{NIST:DLMF}
\emph{{\it NIST Digital Library of Mathematical Functions}},
  http://dlmf.nist.gov/, Release 1.0.18 of 2018-03-27, F.~W.~J. Olver, A.~B.
  {Olde Daalhuis}, D.~W. Lozier, B.~I. Schneider, R.~F. Boisvert, C.~W. Clark,
  B.~R. Miller and B.~V. Saunders, eds.

\bibitem[DR24]{DR}
A.~Dunn and M.~Radziwi{\l\l}, \emph{Bias in cubic {G}auss sums: {P}atterson's
  conjecture}, Ann. of Math. (2) \textbf{200} (2024), no.~3, 967--1057.
  \MR{4816436}

\bibitem[EP92]{EckPat}
C.~Eckhardt and S.~J. Patterson, \emph{On the {F}ourier coefficients of
  biquadratic theta series}, Proc. London Math. Soc. (3) \textbf{64} (1992),
  no.~2, 225--264. \MR{1143226}

\bibitem[Gau73]{Gauss}
C.~F. Gauss, \emph{Werke. {B}and {II}}, Georg Olms Verlag, Hildesheim, 1973,
  Reprint of the 1863 original. \MR{616130}

\bibitem[GL13]{GL}
L.~Goldmakher and B.~Louvel, \emph{A quadratic large sieve inequality over
  number fields}, Math. Proc. Cambridge Philos. Soc. \textbf{154} (2013),
  no.~2, 193--212. \MR{3021809}

\bibitem[HB95]{HB1}
D.~R. Heath-Brown, \emph{A mean value estimate for real character sums}, Acta
  Arith. \textbf{72} (1995), no.~3, 235--275. \MR{1347489}

\bibitem[HB00]{HB}
\bysame, \emph{Kummer's conjecture for cubic {G}auss sums}, Israel J. Math.
  \textbf{120} (2000), no.~part A, 97--124. \MR{1815372}

\bibitem[HBP79]{HBP}
D.~R. Heath-Brown and S.~J. Patterson, \emph{The distribution of {K}ummer sums
  at prime arguments}, J. Reine Angew. Math. \textbf{310} (1979), 111--130.
  \MR{546667}

\bibitem[Hof93]{Hoff}
J.~Hoffstein, \emph{Eisenstein series and theta functions on the metaplectic
  group}, Theta functions: from the classical to the modern, CRM Proc. Lecture
  Notes, vol.~1, Amer. Math. Soc., Providence, RI, 1993, pp.~65--104.
  \MR{1224051}

\bibitem[IR90]{IR}
K.~Ireland and M.~Rosen, \emph{A classical introduction to modern number
  theory}, second ed., Graduate Texts in Mathematics, vol.~84, Springer-Verlag,
  New York, 1990. \MR{1070716}

\bibitem[Jut75]{Jut}
M.~Jutila, \emph{On mean values of {D}irichlet polynomials with real
  characters}, Acta Arith. \textbf{27} (1975), 191--198. \MR{406954}

\bibitem[KP84]{KP}
D.~A. Kazhdan and S.~J. Patterson, \emph{Metaplectic forms}, Inst. Hautes
  \'{E}tudes Sci. Publ. Math. (1984), no.~59, 35--142. \MR{743816}

\bibitem[Kub69]{Kub1}
T.~Kubota, \emph{On automorphic functions and the reciprocity law in a number
  field}, Lectures in Mathematics, Department of Mathematics, Kyoto University,
  No. 2, Kinokuniya Book Store Co., Ltd., Tokyo, 1969. \MR{0255490}

\bibitem[Kub71]{Kub2}
\bysame, \emph{Some number-theoretical results on real analytic automorphic
  forms}, Several complex variables, {II} ({P}roc. {I}nternat. {C}onf., {U}niv.
  {M}aryland, {C}ollege {P}ark, {M}d., 1970), Lecture Notes in Math., Vol. 185,
  Springer, Berlin, 1971, pp.~87--96. \MR{0314768}

\bibitem[Kum75]{Kummer}
E.~E. Kummer, \emph{Collected papers}, Springer-Verlag, Berlin-New York, 1975,
  Volume I: Contributions to number theory, Edited and with an introduction by
  Andr\'{e} Weil. \MR{0465760}

\bibitem[Ono09]{On}
K.~Onodera, \emph{Bound for the sum involving the {J}acobi symbol in {$\Bbb
  Z[i]$}}, Funct. Approx. Comment. Math. \textbf{41} (2009), no.~part 1,
  71--103. \MR{2568797}

\bibitem[Pat77]{Pat1}
S.~J. Patterson, \emph{A cubic analogue of the theta series}, J. Reine Angew.
  Math. \textbf{296} (1977), 125--161. \MR{563068}

\bibitem[Pat78]{Pat4}
\bysame, \emph{On the distribution of {K}ummer sums}, J. Reine Angew. Math.
  \textbf{303(304)} (1978), 126--143. \MR{514676}

\bibitem[Pat84]{Pat2}
\bysame, \emph{Whittaker models of generalized theta series}, Seminar on number
  theory, {P}aris 1982--83 ({P}aris, 1982/1983), Progr. Math., vol.~51,
  Birkh\"{a}user Boston, Boston, MA, 1984, pp.~199--232. \MR{791596}

\bibitem[Pat87]{Pat3}
\bysame, \emph{The distribution of general {G}auss sums and similar arithmetic
  functions at prime arguments}, Proc. London Math. Soc. (3) \textbf{54}
  (1987), no.~2, 193--215. \MR{872805}

\bibitem[Sel63]{Sel}
A.~Selberg, \emph{Discontinuous groups and harmonic analysis}, Proc.
  {I}nternat. {C}ongr. {M}athematicians ({S}tockholm, 1962), Inst.
  Mittag-Leffler, Djursholm, 1963, pp.~177--189. \MR{0176097}

\bibitem[Suz83]{Suz1}
T.~Suzuki, \emph{Some results on the coefficients of the biquadratic theta
  series}, J. Reine Angew. Math. \textbf{340} (1983), 70--117. \MR{691962}

\bibitem[Vau75]{Vau}
R.~C. Vaughan, \emph{Mean value theorems in prime number theory}, J. London
  Math Soc. (2) \textbf{10} (1975), 153--162. \MR{0376567}

\end{thebibliography}
\end{document}